\newtheorem{thm}{Theorem}[section]
\newtheorem{prop}[thm]{Proposition}
\newtheorem{cor}[thm]{Corollary}
\newtheorem{lem}[thm]{Lemma}
\theoremstyle{definition}
\newtheorem{defn}[thm]{Definition}
\newtheorem{claim}[thm]{Claim}
\theoremstyle{remark}
\newtheorem{rem}[thm]{Remark}
\newtheorem{ex}[thm]{Example}
\newcommand{\K}{{\mathbb K}}
\newcommand{\G}{{\mathcal G}}
\newcommand{\C}{{\mathcal C}}
\newcommand{\E}{{\mathcal E}}
\newcommand{\calH}{{\mathcal H}}
\newcommand{\calK}{{\mathcal K}}
\newcommand{\e}{\varepsilon}
\newcommand{\D}{\mathcal{D}}
\newcommand{\mapright}[1]{%
 \smash{\mathop{%
  \hbox to 1cm{\rightarrowfill}}\limits_{#1}}}
\newcommand{\maprightd}[2]{%
 \smash{\mathop{%
  \hbox to 1.2cm{\rightarrowfill}}\limits^{#1}\limits_{#2}}}
\newcommand{\mapleft}[1]{%
 \smash{\mathop{%
  \hbox to 1cm{\leftarrowfill}}\limits_{#1}}}
\newcommand{\mapleftu}[1]{%
 \smash{\mathop{%
  \hbox to 0.8cm{\leftarrowfill}}\limits^{#1}}}
\newcommand{\maprightu}[1]{%
 \smash{\mathop{%
  \hbox to 1cm{\rightarrowfill}}\limits^{#1}}}
\newcommand{\maprightud}[2]{%
 \smash{\mathop{%
  \hbox to 1cm{\rightarrowfill}}\limits^{#1}_{#2}}}
\newcommand{\mapleftud}[2]{%
 \smash{\mathop{%
  \hbox to 1cm{\leftarrowfill}}\limits^{#1}_{#2}}}
\newcounter{eqn}[section]
\def\theeqn{\textnormal{(\thesection.\arabic{eqn})}}
\def\eqnlabel#1{%
  \refstepcounter{eqn}%
  \label{#1}%
  \leqno{\theeqn}}
\begin{document}

\title[Association schemoids and their categories]{
Association schemoids and their categories \\
}

\footnote[0]{{\it 2010 Mathematics Subject Classification}: 18D35, 05E30  \\
{\it Key words and phrases.} 
Association scheme, small category, Baues-Wirsching cohomology.

This research was partially supported by a Grant-in-Aid for Scientific
Research HOUGA 25610002
from Japan Society for the Promotion of Science.

Department of Mathematical Sciences, 
Faculty of Science,  
Shinshu University,   
Matsumoto, Nagano 390-8621, Japan   
e-mail:{\tt kuri@math.shinshu-u.ac.jp} 

Department of Mathematical Sciences, 
Faculty of Science,  
Shinshu University,   
Matsumoto, Nagano 390-8621, Japan  
e-mail:{\tt kmatsuo@math.shinshu-u.ac.jp}
}

\author{Katsuhiko KURIBAYASHI and Kentaro MATSUO}
\date{}
   
\maketitle


\begin{abstract}
We propose the notion of {\it association schemoids} generalizing that of association schemes 
from small categorical points of view. 
In particular, a generalization of the Bose-Mesner algebra of an association scheme appears as a subalgebra in the category algebra of the underlying category of a schemoid. 
In this paper, the equivalence between the categories of groupoids and that of thin association schemoids is established. 
Moreover linear extensions of schemoids are considered. 
A general theory of the Baues-Wirsching cohomology deduces a classification theorem for such extensions of a schemoid.  
We also introduce two relevant categories of schemoids into which the categories of schemes due to Hanaki 
and due to French are embedded, respectively.   
\end{abstract}

\section{Introduction}
Finite groups are investigated in appropriate derived categories via group rings 
with categorical representation theory and in the category of topological spaces via classifying spaces 
with homotopy theory. Since association schemes are 
regarded as generalizations of finite groups, it is natural to construct a categorical framework for studying 
such generalized groups. Then we introduce {\it association schemoids} and their categories 
in expectation of interaction with association schemes, groupoids, the classifying spaces of small categories 
and the new notion in the study of these subjects. 

An association scheme is a pair of a finite set and a particular partition of the Cartesian square 
of the set. The notion plays a crucial role 
in algebraic combinatorics \cite{B-I}, including the study of designs and graphs, and in coding theory \cite{D}.  
In fact, such schemes encode combinatorial phenomena 
in terms of representation theory of finite dimensional algebras. To this end, 
we may use the Bose-Mesner algebra of an association scheme which, by definition, 
is the matrix algebra generated by adjacency matrices of the elements of the partition.  
Each spin model \cite{Jones}, which is a square matrix yielding an invariant of links and knots, is realized 
as an element of the Bose-Mesner algebra of some association scheme \cite{Jaeger, J-M-N, N}. 
This also exhibits the importance of association schemes. Moreover, the structure theory   
of association schemes have been investigated in the framework of group theory as generalized groups; 
see \cite{Z_1995, Z_book}. 
Very recently, global nature of the interesting objects is studied 
in such a way as to construct categories consisting of finite association schemes 
and appropriate morphisms \cite{F, H}.  
Interaction with the above-mentioned subjects 
makes the realm of such schemes more fruitful.

In this paper, by generalizing the notion of association schemes itself from a categorical point of view, 
we introduce a particular structure on a small category 
and coin the notion of {\it association schemoids}. 
Roughly speaking, a specific partition of the set of morphisms brings the additional structure into the small category we deal with.  
One of important points is that the Bose-Mesner algebra 
associated with a schemoid can be defined in a natural way as a subalgebra in the category algebra of 
the underlying category of the given schemoid.  Here the category algebra is a generalization of the path algebra 
associated with a quiver, 
which is a main subject of consideration in representation theory of associative algebras \cite{ASS}. 
Moreover, we should mention that the category ${\bf AS}$ of finite association schemes introduced by Hanaki  \cite{H} 
is imbedded into our category ${\bf ASmd}$ of association schemoids fully and faithfully; see Theorem \ref{thm:functors}.

A thin association scheme is identified with a group; see \cite[(1.12)]{Z_1995} for example. With our setting, 
the correspondence is generalized; that is, 
we give an equivalence between the category of based {\it thin} association schemoids and that of groupoids; 
see Theorem \ref{thm:eq_categories}. Indeed, the equivalence is 
an expected lift of a functor from the category of finite groups to that of 
based thin association schemes in \cite{H}; see the diagram (6.1) below and the ensuring comments. 
 
Baues and Wirsching \cite{B-W} have defined the linear extension of a small category, which is a generalization of a group extension, and 
have proved a classification theorem for such extensions with cohomology of small categories. 
We show that each linear extension of a given schemoid admits a unique 
schemoid structure; see Proposition \ref{prop:extension}. 
This result enables one to conclude that extensions of a schemoid are also classified by the Baues-Wirsching cohomology; 
see Theorem \ref{classification_ex}. In our context, every extension of 
an arbitrary association scheme is trivial; see Corollary \ref{cor:trivial_extensions}. 
Unfortunately, our extensions of a schemoid do {\it not} cover extensions of an association scheme, 
which are investigated in \cite{Bang-Hirasaka}  and \cite{Hida}.  

In \cite{F}, French has introduced a wide subcategory of the category ${\bf AS}$ of finite association schemes. 
The subcategory consists of all finite association schemes and particular maps 
called the admissible morphisms.  
In particular, the result \cite[Corollary 6.6]{F} asserts that the correspondence sending a finite scheme to its 
Bose-Mesner algebra gives rise to a functor ${\mathcal A}(\text{-})$ from the wide subcategory to the category 
of algebras. To understand the functor in terms of schemoids, 
we introduce a category ${\mathcal B}$ of {\it basic} schemoids and admissible morphisms,  
into which the subcategory due to French is embedded.  
In addition,  the functor ${\mathcal A}(\text{-})$ can be lifted to the category ${\mathcal B}$; see the diagram (6.1) again. 

Let $q: \E \to \C$ be a linear extension over a schemoid $\C$; see Definition \ref{defn:extensions}. 
It is remarkable that 
in some case, the projection $q$ from the schemoid $\E$ to $\C$ is admissible. 
Moreover, the morphism induces 
an isomorphism between the Bose-Mesner algebras of $\E$ and $\C$ 
even if  $\E$ and $\C$ are not equivalent as a category; see Corollary \ref{cor:isomorphism} and Remark \ref{rem:iso_but_not_eqivalent}. 

The plan of  this paper is as follows. In Section 2, we introduce the association schemoids, their Bose-Mesner algebras,  
and their Terwilliger algebras. Ad hoc examples and the category ${\bf ASmd}$ of schemoids mentioned above are also described.  
Section 3 relates the category ${\bf ASmd}$ 
with other categories, especially ${\bf AS}$ and the category of groupoids. 
In Section 4, after dealing with (semi-)thin 
schemoids, we prove Theorem \ref{thm:eq_categories}. 
Section 5 explores linear extensions of schemoids. 
At the end of the section, we give an example of a non-split schemoid extension.  
Section 6 is devoted to describing some of results due to French \cite{F} in our context, 
namely in terms of schemoids. 
Section 7 explains a way to construct a (quasi-)schemoid thickening a given association scheme. 
In Appendix, we try to explain that a toy model for a network seems to be a schemoid.  
In this paper, we do not pursue properties of the Bose-Mesner algebra and Terwilliger 
algebras of schemoids while one might expect the study of such algebras from categorical representation theory points 
of view. Though we shall need a generalization of the {\it closed subsets} of association schemes when defining subobjects, quotients, limits and colimits in the context of association schemoids, this article does not address the issue.    

As mentioned in \cite{P-Z} by Ponomarenko and Zieschang, 
association schemes are investigated from three different points of view: as algebras,  
purely structure theoretically (Jordan-H\"older theory, Sylow theory), and as geometries 
(distance-regular graphs, designs).  Similarly, 
association schemoids may be studied relying on combinatorial way, 
categorical representation theory and homotopy theory for small categories \cite{Thomason, Latch}. 
In fact, 
the diagram (6.1) of categories and functors enables us to 
expect that schemoids bring us considerable interests containing association schemes 
and that the study of the new subjects paves the way for homotopical and 
categorical consideration of such generalized groups. 

As one of further investigations on schemoids, we intend to discuss a (co)fibration category structure  \cite{B} 
on an appropriate category of schemoids. In particular, 
it is important to consider (co)cylinder objects explicitly in the category in developing 
a homotopical classification of schemoids.  
Moreover, the notion of (co)limits in a category may give us a new construction of schemoids and hence association 
schemes. These are issues in our forthcoming paper. 


\section{Association schemoids}
We begin by recalling the definition of the association scheme. 
Let $X$ be a finite set and $S$ a partition of $X\times X$, namely a subset of the power set $2^{X\times X}$, 
which contains a partition of the subset $1_X :=\{ (x, x) \mid x \in X\}$ as a subset. 
Assume further that for each 
$g \in S$, the subset $g^*:=\{(y, x) \mid (x, y) \in g\}$ is in $S$. Then the pair $(X, S)$ is called a 
{\it coherent configuration} if for all 
$e, f, g \in S$, there exists an integer $p_{ef}^g$ such that for any $(x, z) \in g$ 
$$
p_{ef}^g=\sharp \{y \in X \mid (x, y)\in e \ \text{and} \ (y, z) \in f \}. 
$$
Observe that $p_{ef}^g$ is independent of the choice of $(x, z) \in g$. 
By definition, a coherent configuration $(X, S)$ is  an {\it association scheme} 
if $S$ contains the subset $1_X$ as an element.  

Let $K$ be a group acting a finite set $X$. Then $K$ act on the set $X\times X$ diagonally. 
We see that the set $S_K$ of $G$-orbits of $X\times X$ gives rise to a coherent configuration $(X, S_K)$.  
It is readily seen that $(X, S_K)$ is an association scheme if and only if the action of $K$ on $X$ is transitive. 

For an association scheme $(X, S)$, the pair $(x, y) \in X\times X$ is regarded as 
an edge between vertices $x$ and $y$. Then 
the scheme $(X, S)$ is considered as a directed complete graph 
and hence a small category; see Example \ref{ex:ex1} (ii) below for more details. 
With this in mind, 
we generalize the notion of association schemes from a categorical point of view. 

\begin{defn}\label{defn:schemoid}
Let $\C$ be a small category; that is, the class of the objects of the category $\C$ is a set. 
Let $S:=\{\sigma_l\}_{l\in I}$ be a partition of the set $mor(\C)$
of all morphisms in $\C$. We call the pair $(\C, S)$ a {\it quasi-schemoid} if 
the set $S$ satisfies the {\it concatenation axiom}. This means that for a triple $\sigma, \tau, \mu \in S$ 
and for any morphisms $f$, $g$ in $\mu$, as a set 
$$
(\pi_{\sigma\tau}^\mu)^{-1}(f) \cong (\pi_{\sigma\tau}^\mu)^{-1}(g), 
$$ 
where $\pi_{\sigma\tau}^\mu : \pi_{\sigma\tau}^{-1}(\mu) \to \mu$ is the map defined to be the restriction of the 
concatenation map $\pi_{\sigma\tau} : \sigma \times_{ob(\C)}\tau \to mor(\C)$.
\end{defn}

For $\sigma, \tau$ and $\mu \in S$, 
we have a diagram which explains the condition above 
$$
\xymatrix@C35pt@R20pt{
\left( \pi_{\sigma\tau}\right) ^{-1}(\mu) \ar@{^{(}->}[rr] \ar[d]_{\pi_{\sigma\tau}^\mu} && \sigma\times _{ob (\C)}\tau \ar[r] \ar[d] \ar[ld]_{comp=\pi_{\sigma\tau}} & \tau
 \ar[d]^{t}\\
\mu \ar@{^{(}->}[r] & mor (\C) & \sigma\ar[r]_{s} & ob (\C). 
}
\eqnlabel{add-0}
$$
If the set $(\pi_{\sigma\tau}^\mu)^{-1}(f)$ is finite, 
then we speak of the number $p_{\sigma\tau}^\mu:=\sharp (\pi_{\sigma\tau}^\mu)^{-1}(f)$ 
as the {\it structure constant}.  

\begin{defn}\label{defn:association_schemoids}
A quasi-schemoid  $(\C, S)$ is  an {\it association schemoid} (schemoid for short)  
if the following conditions (i) and (ii) hold. 

(i) For any $\sigma \in S$ and the set $J:= \amalg_{x\in ob(\C)}\text{Hom}_\C(x, x)$, if $\sigma \cap J \neq \phi$, then 
$\sigma \subset J$.  

(ii) There exists a contravariant functor 
$T : \C \to \C$ such that $T^2=id_\C$ and 
$$
\sigma^*:=\{ T(f) \mid f \in \sigma \}
$$
is in the set $S$ for any $\sigma \in S$. We denote by $(\C, S, T)$ the association schemoid together with such a functor $T$.
\end{defn}

Let $J_0$ denote the subset $\{ 1_x \mid x\in ob (\C)\}$ of the set of morphisms of a category $\C$. 
We call a (quasi-)schemoid {\it unital} if  $\alpha \subset J_0$ 
for any $\alpha\in S$ with $\alpha \cap J_0\neq\phi$.

We define morphisms between (quasi-)schemoids. 

\begin{defn}\label{defn:morphisms}
(i) Let $(\C, S)$ and $(\E, S')$ be quasi-schemoids. A functor $F: \C \to \E$ is a morphism of quasi-schemoids 
if for any $\sigma$ in $S$, 
$F(\sigma) \subset \tau$ for some $\tau$ in $S'$. 
We then write $F : (\C, S) \to (\E, S')$ for the morphism.  By abuse of notation, we may write 
$F(\sigma) = \tau$ if $F(\sigma) \subset \tau$ for a morphism $F$ of schemoids. 

(ii) Let $(\C, S, T)$ and $(\E, S', T')$ be association schemoids. 
If a morphism $F$ from  $(\C, S)$  to $(\E, S')$ satisfies the condition that 
$FT = T'F$, then we call such a functor $F$ a morphism of association schemoids and denote it by 
$F: (\C, S, T) \to (\E, S',  T')$. 
\end{defn}

Let $\C$ be a small category and $\K$ a commutative ring with unit.  
We here recall the {\it category algebra} $\K\C$ of $\C$ which is defined to be the free $\K$-module generated 
by the morphisms of the category $\C$. The product of morphisms $\alpha$ and $\beta$ as elements of $\K\C$ is defined by 
$$
\alpha\beta = \begin{cases}
\alpha \circ \beta  & \text{if $\alpha$ and $\beta$ are composable}  \\
0 & \text{otherwise}. 
\end{cases}
$$

Let $(\C, S)$ be a quasi-schemoid with $mor (\C)$ finite.  
Then for any $\sigma$ and $\tau$ in $S$, we have an equality 
$$
(\sum_{s\in \sigma} s) \cdot (\sum_{t\in \tau} t) = \sum_{\mu\in S} p_{\sigma\tau}^\mu (\sum_{u\in \mu} u)
$$ 
in the category algebra $\K \C$ of $\C$. This enables one to obtain 
a subalgebra 
$
\K(\C, S)
$
of $\K\C$ generated by the elements $(\sum_{s\in \sigma} s)$ for all $\sigma \in S$. We refer to  
the subalgebra 
$
\K(\C, S)
$
as the {\it schemoid algebra} of $(\C, S)$. 
Observe that the algebra $\K(\C, S)$ is not unital in general even if $\C$ is finite. The following lemma shows the significance of the unitality of a (quasi-)schemoid. 

\begin{lem}\label{lem:unital}
Let $(\C, S)$ be a quasi-schemoid whose underlying category $\C$ is finite. Then 
$(C, S)$ is unital if and only if so is the schemoid algebra $\K(\C, S)$. 
\end{lem}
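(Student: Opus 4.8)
The plan is to unwind the definitions and see that the unit of $\K(\C,S)$, when it exists, must be built out of exactly those generators $\sum_{u\in\alpha}u$ coming from the $\alpha\in S$ that meet $J_0$. First I would recall that, since $\C$ is finite, the category algebra $\K\C$ is unital with identity $\sum_{x\in ob(\C)}1_x$, and that an element $e\in\K(\C,S)$ acts as a two-sided identity for the subalgebra $\K(\C,S)$ exactly when $e\cdot(\sum_{s\in\sigma}s)=\sum_{s\in\sigma}s=(\sum_{s\in\sigma}s)\cdot e$ for every $\sigma\in S$. Write $e=\sum_{\alpha\in S}c_\alpha(\sum_{u\in\alpha}u)$ for scalars $c_\alpha\in\K$; because the morphisms of $\C$ form a $\K$-basis of $\K\C$ and the sets $\alpha$ partition $mor(\C)$, comparing coefficients of individual morphisms is legitimate and will drive the argument.

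For the direction ``unital schemoid $\Rightarrow$ unital algebra'': if $(\C,S)$ is unital, the block $J_0=\{1_x\mid x\in ob(\C)\}$ is a union of parts, say $J_0=\coprod_{\alpha\in S_0}\alpha$, so $\sum_{x\in ob(\C)}1_x=\sum_{\alpha\in S_0}(\sum_{u\in\alpha}u)$ already lies in $\K(\C,S)$; since it is the identity of the ambient algebra $\K\C$, it is a fortiori the identity of the subalgebra. For the converse, suppose $\K(\C,S)$ has an identity $e$ as above. Fix an object $x$ and consider the generator corresponding to the part $\sigma_x\in S$ containing $1_x$. From $e\cdot(\sum_{u\in\sigma_x}u)=\sum_{u\in\sigma_x}u$ and from examining which products $\alpha\ast\sigma_x$ can contribute the morphism $1_x$ itself — only terms $f\circ 1_x$ with $f\in\alpha$ and $f$ a morphism out of $x$, i.e. $f$ with source $x$ — I would extract, after also using $(\sum_{u}u)\cdot e=\sum_u u$ symmetrically, that the part of $e$ ``at $x$'' must be $1_x$ with coefficient $1$ and that no non-identity morphism can appear. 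The cleanest route: show $e$ is forced to equal $\sum_{x}1_x$, hence $\sum_x 1_x\in\K(\C,S)$; then since $\sum_x 1_x$ is a $\K$-linear combination of the basis elements $\sum_{u\in\alpha}u$ and $1_x$ occurs in exactly one part $\sigma_x$, that part can contain no morphism other than identities (else $\sum_x 1_x$ would involve a non-identity morphism with nonzero coefficient), and running $x$ over $ob(\C)$ shows every part meeting $J_0$ is contained in $J_0$, i.e. $(\C,S)$ is unital.

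The main obstacle is the ``only if'' direction, specifically pinning down $e$: one must rule out that a clever combination of generators involving non-identity morphisms could serve as an identity, and one must handle the bookkeeping of which concatenations $\alpha\ast\beta$ produce identity morphisms. The key observation that makes this tractable is that $f\circ g=1_x$ in a category forces $g$ to be a split mono and $f$ a split epi with a very constrained source/target pattern, and in particular that for a fixed object $x$ the only way to obtain $1_x$ as a product $f\circ h$ with $h=1_x$ is $f=1_x$; combining this with the linear independence of morphisms in $\K\C$ and the equations $e\,g=g=g\,e$ ranging over all generators $g$ should force $e=\sum_{x}1_x$ cleanly. I would also note that finiteness of $\C$ is used twice: to guarantee the ambient identity $\sum_x 1_x$ exists as a finite sum, and to make $\K(\C,S)$ well defined via finite structure constants, so the hypothesis in the statement is genuinely needed.
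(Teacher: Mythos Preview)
Your forward implication (unital schemoid $\Rightarrow$ unital algebra) is exactly the paper's: once $J_0$ is a union of parts, $\sum_{x}1_x$ is a finite sum of generators and hence lies in $\K(\C,S)$.

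For the converse there is a genuine gap in your plan. You propose to show that any two-sided identity $e$ of $\K(\C,S)$ is forced to equal $\sum_x 1_x$, and your suggested mechanism is control over factorisations $u\circ v=1_x$. But such factorisations are exactly what break the argument: in any groupoid one has $g^{-1}\circ g=1_x$ for every $g$ with source $x$, so the ``only way to obtain $1_x$ as $f\circ h$ with $h=1_x$ is $f=1_x$'' observation does not exclude the relevant contributions. Concretely, take $\C=G^{\bullet}$ for $G=\Z/2=\{1,a\}$ with the single part $\sigma=\{1,a\}$ (this is Example~\ref{ex_group}). One checks $p_{\sigma\sigma}^{\sigma}=2$, so $(\C,S)$ is a quasi-schemoid; it is \emph{not} unital since $\sigma$ meets $J_0$ but $\sigma\not\subset J_0$. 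Over $\K=\Q$ the schemoid algebra is $\Q\cdot(1+a)$ with $(1+a)^2=2(1+a)$, so $e=\tfrac12(1+a)$ is a perfectly good identity for $\K(\C,S)$, yet $e\neq 1=\sum_x 1_x$. Thus the step ``$e$ is forced to equal $\sum_x 1_x$'' fails in general, and with it the strategy you outline.

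The paper does not attempt this step at all. Its argument for the converse \emph{begins} from the equation $\sum_{x}1_x=\sum_i\alpha_i\bigl(\sum_{s\in\sigma_i}s\bigr)$, i.e.\ it takes as given that the ambient identity $\sum_x 1_x$ already lies in $\K(\C,S)$; the remainder is precisely the linear-independence argument you describe (each $1_x$ sits in a unique $\sigma_i$ with coefficient $1$, so $\sigma_i$ cannot contain a non-identity morphism). In other words, the part of your sketch \emph{after} ``hence $\sum_x 1_x\in\K(\C,S)$'' matches the paper verbatim; what the paper does not supply, and what your sketch cannot supply either, is a passage from ``$\K(\C,S)$ has some unit $e$'' to ``$\sum_x 1_x\in\K(\C,S)$'' valid over an arbitrary ground ring $\K$.
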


\begin{proof}
Assume that $\K(\C, S)$ is unital. We write $\sum_{x\in ob(\C)}1_x = \sum_i\alpha_i(\sum_{s\in \sigma_i}s)$, where 
$\alpha_i \in \K$ and $\sigma_i \in S$. Then for any $x \in ob(\C)$, there exists a unique index $i$ such that 
$1_{x} \in \sigma_i$ and $\alpha_i = 1$. If the element $\sigma_i$ of $S$ contains a morphism $s$ which is not the identity $1_y$ 
for some $y \in ob(\C)$, then the right hand side of the equality has $s$ as a term, which is a contradiction. 
The converse is immediate.       
\end{proof}

We are aware that the schemoid algebra is a generalization of the Bose-Mesner algebra associated 
with an association scheme; see Example \ref{ex:ex1} (ii) below for details.  
We may call the schemoid algebra the Bose-Mesner algebra of the given quasi-schemoid. 

Suppose that the underlying category $\C$ of a quasi-schemoid $(\C, S)$ has a terminal object $e$. 
By definition, for any object $x$ of $\C$, 
there exists exactly only one morphism $(e, x)$ from $x$ to $e$. For any $\sigma \in S$, we define an element 
$E_\sigma$ of the category algebra $\K\C$ by 
$E_\sigma = \sum_{(e, x)\in \sigma}1_x$. We refer to the subalgebra $T(e)$ of $\K\C$ 
generated by $\K(\C, S)$ and elements $E_\sigma$ for $\sigma \in S$ as the Terwilliger algebra 
of $(\C, S)$. 
Since $\sum_{\sigma \in S}E_\sigma = \sum_{x \in ob(\C)}1_x$, it follows that $T(e)$ is unital if $\C$ is finite.

\begin{rem}
(i) The schemoid algebra  of  an quasi-schemoid $(\C, S)$ can be defined provided 
$\sharp \sigma < \infty$ for each $\sigma \in S$ and for any $\tau$ and $\mu$ in $S$, the structure constant 
$p_{\sigma\tau}^\mu$ is zero except for 
at most finite indexes $\mu \in S$. 

(ii) A functor $F :  \C \to \E$ induces an algebra map $F : \K\C \to \K\E$ if $F$ is a monomorphism on objects.  
However, a morphism $F : (\C, S) \to (\E, H)$ of quasi-association schemoids does not define naturally an algebra map between 
schemoid algebras $\K(\C, S)$ and $\K(\C, E)$ even if $F$ induces an algebra map as mentioned above.  
In Section \ref{section:6}, we shall discuss morphisms between quasi-schemoids which induce algebra maps 
between the schemoid algebras. 
\end{rem}

\begin{ex} \label{ex:ex1} (i) A (possibly infinite) group $G$ gives rise to an association schemoid $(G, \widetilde{G}, T)$, where
$\widetilde{G} = \{ \{g\} \mid g \in G\}$ and $T(g)=g^{-1}$. The schemoid algebra $\K(G, \widetilde{G})$ is nothing but 
the group ring $\K G$. 

(ii) For an association scheme $(X, S)$, we define an association schemoid $j(X, S)$ by the triple $(\C, U, T)$ for which 
$ob(\C)=X$, $\text{Hom}_\C(y, x) =\{(x, y)\} \subset X\times X$, $U =S$, $T(x) = x$ and $T(x, y) = (y, x)$, where the composite of morphisms 
$(z, x)$ and $(x, y)$ is defined by $(z, x) \circ (x, y) = (z, y)$.    

The schemoid algebra of $j(X, S)$ is indeed the ordinary Bose-Mesner algebra of the association scheme $(X, S)$. 
Moreover, we see that the Terwilliger algebra $T(e)$ of $j(X, S)$ is 
the Terwilliger algebra of $(X, S)$ introduced originally in \cite{T}.  
Observe that every object of $j(X, S)$ is a terminal one because $j(X, S)$ is a directed complete graph.  

(iii) Let $G$ be a group. Define a subset $G_f$ of $G\times G$ for $f\in G$ by 
$G_f:=\{(k, l) \mid k^{-1}l = f\}$. Then we have an association scheme $S(G) = (G, [G], T)$, where 
$[G]=\{G_f\}_{f\in G}$. 
The same procedure permits us to obtain 
an association schemoid $\widetilde{S}({\mathcal H}) = (\widetilde{\mathcal H}, S, T)$ for a groupoid 
${\mathcal H}$, where $ob (\widetilde{\mathcal H}) = mor({\mathcal H})$ and 

$$
\text{Hom}_{\widetilde{{\mathcal H}}}(g, h) = \begin{cases}
\{(h, g)\}  & \text{if}  \ \  t(h) = t(g)  \\
\varnothing & \text{otherwise}. 
\end{cases}
$$
In fact, we define the partition $S=\{ \G_f \}_{f \in mor({\mathcal H})}$ by $\G_f = \{(k, l) \mid k^{-1}l = f\}$, 
$T(f)= f$ for $f$ in $ob(\widetilde{\mathcal H})$ 
and $T((f, g)) = (g, f)$ for $(f, g) \in mor(\widetilde{\mathcal H})$. 
\end{ex}

We define categories $q{\bf ASmd}$ and ${\bf ASmd}$ to be the category of quasi-schemoids and that of association schemoids, respectively.   
The forgetful functor $k : {\bf ASmd} \to q{\bf ASmd}$ is defined immediately. 

Let ${\bf Cat}$ be the category of small categories. We recall that morphisms of ${\bf Cat}$ 
are functors between small categories. 
Let ${\bf Gpd }$ be the category of possibly infinite groupoids which is a full subcategory of ${\bf Cat}$. 
For a functor $F \in \text{Hom}_{\bf Gpd }(\calK, \calH)$, we define a morphism  
$\widetilde{S}(F)$ in $\text{Hom}_{{\bf ASmd}}(\widetilde{S}(\calK), \widetilde{S}(\calH))$ by 
$\widetilde{S}(F)(f) = F(f)$ and $\widetilde{S}(F)(f, g) = (F(f), F(g))$ for $f, g \in mor(\calK)$. 
Then the correspondence gives rise to a functor $\widetilde{S}( \ ) : {\bf Gpd } \to {\bf ASmd}$. 

Let ${\bf AS}$ be the category of association schemes in the sense of Hanaki \cite{H}; that is, 
its objects are association schemes and morphisms $f : (X, S) \to (X', S')$ are maps which satisfy the condition that 
for any $s\in S$, $f(s) \subset s'$ for some $s' \in S'$.   
It is readily seen that the correspondence $j$ defined in Example \ref{ex:ex1} (ii) induces 
a functor $j : {\bf AS} \to {\bf ASmd}$.

We obtain many association schemoids from association schemes and groupoids via the functors $\widetilde{S}$ and $j$; see 
Example \ref{ex5}.
As mentioned above, we have 

\begin{lem}\label{lem:images}
A schemoid in the image of the functor $\widetilde{S}$ or $j$ is a groupoid whose hom-set for any two objects consists of 
a single element. 
\end{lem}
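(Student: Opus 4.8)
The plan is to prove the statement by unwinding the two constructions directly; in each case only the underlying category is relevant, and the partition together with the involution $T$ plays no role. A schemoid in the image of $j$ is, by Example~\ref{ex:ex1}(ii), of the form $j(X,S)$ with underlying category $\C$ having $ob(\C)=X$, $\text{Hom}_\C(y,x)=\{(x,y)\}$, and composition $(z,x)\circ(x,y)=(z,y)$; a schemoid in the image of $\widetilde{S}$ is, by Example~\ref{ex:ex1}(iii), of the form $\widetilde{S}(\calH)$ with underlying category $\widetilde{\calH}$ having $ob(\widetilde{\calH})=mor(\calH)$, with $\text{Hom}_{\widetilde{\calH}}(g,h)$ equal to $\{(h,g)\}$ when $t(h)=t(g)$ and empty otherwise, and composition given by the same formula on objects sharing a common target under $t$.

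For $j(X,S)$ the hom-set assertion is immediate, since $\text{Hom}_\C(y,x)$ is a singleton for every ordered pair. To see that $\C$ is a groupoid I would check, using only the composition law, that $1_x=(x,x)$ (left and right unitality being read off at once) and that $(y,x)$ is a two-sided inverse of $(x,y)$, since $(y,x)\circ(x,y)=(y,y)=1_y$ and $(x,y)\circ(y,x)=(x,x)=1_x$; thus $\C$ is the indiscrete groupoid on the set $X$. The case of $\widetilde{S}(\calH)$ is formally identical: every hom-set has at most one element by definition, and the same computation gives $1_g=(g,g)$ and shows that $(g,h)$ inverts $(h,g)$. Equivalently, the target map $t\colon mor(\calH)\to ob(\calH)$ partitions $ob(\widetilde{\calH})$ into fibres and exhibits $\widetilde{\calH}$ as the coproduct over $ob(\calH)$ of the indiscrete (chaotic) groupoids on these fibres, each of which is visibly a groupoid with singleton hom-sets.

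I do not anticipate any real obstacle here; the argument is essentially bookkeeping with the definitions. The only point worth flagging is that for $\widetilde{S}(\calH)$ the hom-set between two objects lying over distinct objects of $\calH$ is empty, so ``consists of a single element'' must there be read as ``consists of at most one element'': it is a genuine singleton exactly when the two objects have equal $t$-image, while in the case of $j(X,S)$ every hom-set is literally a singleton.
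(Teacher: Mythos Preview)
Your argument is correct and is exactly the kind of direct unwinding of the definitions that the paper has in mind; indeed the paper offers no proof at all for this lemma, treating it as self-evident from the constructions in Example~\ref{ex:ex1}. Your final remark is also well taken: as written the statement is slightly imprecise for $\widetilde{S}(\calH)$, where hom-sets between objects with distinct $t$-images are empty, so ``a single element'' should be read as ``at most one element''.
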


The following examples are association (quasi-)schemoids which are in neither of the images.  A more systematic way to 
construct (quasi-)schemoids is described in Sections 5 and 7. 

\begin{ex}\label{ex_group} We consider a group $G$ a groupoid with single object. 
Then the triple $G^\bullet:= (G, \{G\}, T)$ is a schemoid with a contravariant functor 
$T :  G \to G$ defined by $T(g) = g^{-1}$. 
In view of Lemma \ref{lem:images}, we see that the schemoid $G^\bullet$ is in neither the image 
of the functors $j$ nor the image of $\widetilde{S}( \ )$ if $\sharp G >  1$.  
\end{ex}

\begin{ex}\label{ex1} Let us consider a category $\C$ defined by the diagram
$$
\xymatrix{
x \ar@(lu,ld)_{1_x} \ar[r]^{f} & y \ar@(ru,rd)^{1_y}
}
$$
Define a contravariant functor $T$ on $\C$ by $T(x) = y$ and $T(y) = x$. Then the triple $(\C, S, T)$ is a unital schemoid, where  
$S=\{ S_1, S_2\}$ with $S_1=\{ 1_x, 1_y\}$ and $S_2=\{ f\}$. We can define another partition $S'$ by 
$S'=\{ S'_1, S'_2, S'_3\}$ for which  $S'_1=\{ 1_x\}$, $S'_2=\{1_y\}$ and $S'_3=\{ f\}$. Then $(\C, S', T)$ is also a unital schemoid. 
\end{ex}

\begin{ex}\label{ex:Join}
Let $\C$ and $\D$ be categories. The join construction $\C\ast\D$ with $\C$ and $\D$ is a category given as follows. The set of objects is the disjoint union 
$ob(\C)\cup ob(\D)$. 
The set of morphisms consists of all elements of $mor(\C) \cup mor (\D)$ and  $w_{ab} \in \text{Hom}_{\C\ast\D}(a, b)$ 
for $a \in ob(\C)$ and $b \in ob(\D)$. 
Observe that $\text{Hom}_{\C\ast\D}(a, b)$ has exactly one element $w_{ab}$ and 
$\text{Hom}_{\C\ast\D}(b, a) = \phi$ if  $a \in ob(\C)$ and $b \in ob(\D)$. The additional concatenation law is defined by 
$\alpha w_{as} = w_{at}$ and $w_{vb} \beta = w_{ub}$ for $\alpha \in \text{Hom}_\D(s, t)$ and $\beta \in \text{Hom}_\C(u, v)$. 

Let $(\C, S)$ and $(\D, S')$ be quasi-schemoids. We define a partition $\Sigma$ of $mor(\C\ast\D)$ by 
$
\Sigma = S\cup S' \cup \{ \{w_{ab}\} \}_{a \in ob(\C), b \in ob(\D)}.
$ 
It is readily seen that $(\C\ast\D, \Sigma)$ is a quasi-schemoid.  
\end{ex}

\begin{ex}\label{ex2}
Let $G$ be a group and let $\C$ denote the category $G\ast G^{op}$ obtained by the join construction, namely a category with   
$ob(\C)=\{x, y\}$, $\mathrm{Hom}_{\C}(x, x) =G$, $\mathrm{Hom}_{\C}(y, y) =G^{\mathrm{op}}$, 
$\mathrm{Hom}_{\C}(x, y) =\{ f\}$ and $\mathrm{Hom}_{\C}( y, x) =\phi$. The diagram 
$$
\xymatrix{
x \ar@(lu,ld)_{G} \ar[r]^{f} & y \ar@(ru,rd)^{G^{\mathrm{op}}}
}
$$
denotes the category $\C$. It is shown that $T : \C \to \C$ defined by $T(x) = y$ and $T(y) = x$ is a contravariant functor. 
Then we have a unital schemoid  $(\C, S, T)$ with the partition $S$ defined by 
$S=\{ S_g\} _{g\in G}\cup\{ S_f\}$, where $S_g=\{g, g^{\mathrm{op}}\}$ and $S_f=\{ f\}$.

Observe that $(\C, S)$ is not isomorphic to the join $(G^\bullet\ast {G'^\bullet}, \Sigma)$ of the schemoid 
$G^\bullet$ and its copy ${G'^\bullet}$ in the sense of Exmaple \ref{ex:Join}. In fact, for any morphism 
$F : (G^\bullet\ast {G'^\bullet}, \Sigma)\to (\C, S)$ of quasi-schemoids, we see that $F(1_G) = 1_x$ and $F(1_{G'}) = 1_y$. This implies that 
$F(\{G\}\cup\{G'\}) \subset S_e$. 
\end{ex}

\begin{ex}\label{ex3}
Let $\C$ be a category defined by the diagram 
$$
\xymatrix@C30pt@R15pt{
\ar@{.}[r] & \ar[r] & y_{i-1} \ar@(lu,ru)^{1_{y_{i-1}}} && x_i \ar@(lu,ru)^{1_{x_i}} \ar[ll]_{h_{i-1}} \ar[d]_{f_i} \ar[rr]^{g_i} && y_{i+1} \ar@(lu,ru)^{1_{y_{i+1}}} & \ar[l] \ar@{.}[r] &\\
\ar@{.}[r] & & x_{i-1} \ar[l] \ar@(ld,rd)_{1_{x_{i-1}}} \ar[u]^{f_{i-1}} \ar[rr]^{g_{i-1}} && y_i \ar@(ld,rd)_{1_{y_i}} && x_{i+1} \ar@(ld,rd)_{1_{x_{i+1}}} \ar[ll]_{h_i} \ar[u]^{f_{i+1}} \ar[r] & \ar@{.}[r] &
}
$$
We define subsets $\sigma$ and $J_0$ of $mor (\C)$ by 
$\sigma=\{g_i, h_i\}_{i \in \mathbb{Z}}$ and $J_0 =\{1_{x_i}, 1_{y_i}\}_{i \in \mathbb{Z}}$, respectively. 
Then for any partition $S$ of the form $\{ \sigma, J_0, \tau_l\}_{l \in I}$ of  $mor (\C)$, 
the triple $(\C, S, T)$ is a unital schemoid, where $T(x_i)=y_i$ and $T(y_i)=x_i$  for any $i \in \mathbb{Z}$. 
\end{ex}

\begin{ex}\label{ex4} For $l \geq 1$, 
let $C_l$ be a category defined by the diagram 
$$
\xymatrix@C35pt@R10pt{
 & a_l \ar[rd]^{\beta_l} & \\ 
x \ar[rr]^{\varepsilon} \ar[ru]^{\alpha_l} \ar[rd]_{\gamma_l} & & y & \text{with} \ \  \beta_l \alpha_l = \varepsilon = \delta_l\gamma_l;  \\
 & b_{l} \ar[ru]_{\delta_l} & 
}
$$
\text{see} \cite[(7.8)]{B-W}. We define a partition $S=\{S_l^i\}_{i=0, 1, 2, 3}$ of $mor(\C_l)$ by 
$S_l^1=\{\alpha_l, \gamma_l\}$, 
$S_l^2 =\{\beta_l, \delta_l\}$, $S_l^3=\{\varepsilon\}$ and $S_l^0=\{1_x, 1_y, 1_{a_l}, 1_{b_l}\}$. 
Define a contravariant functor $T : C_l \to C_i$ by $T(a_l) = b_l$, $T(\varepsilon) = \varepsilon$, 
$T(\alpha_l)=\delta_l$ and $T(\beta_l) = \gamma_l$. Then we obtain a unital schemoid  $C_{[k]}$ of the form 
$$( \bigcup_{1\leq l \leq k}\C_l,  \{\bigcup_{1\leq l \leq k}S_l^i\}_{0\leq i\leq 3}, T). $$ 
\end{ex}

\begin{ex}\label{ex5}
Let $(\C_i, S_i, T_i)$ be a schemoid. Then it is readily seen that the product $(\Pi_i \C_i, \Pi_i S_i, \Pi_iT_i)$ is a schemoid.  In particular 
an EI-category of the form $C_{[k]}\times G^\bullet$; that is, all endomorphisms are isomorphisms, 
is a schemoid for any group $G$. Moreover, for an association scheme $(X, S)$, 
we have a schemoid of the form $j(X, s)\times G^\bullet$, which is in neither the images of 
$j$ nor the image of $\widetilde{S}( \ )$ provided 
$\sharp G >  1$.  
\end{ex}

\section{A category of association schemoids and related categories}

Let ${\bf Gr}$ be category of finite groups.   
With the funcotrs $\widetilde{S}( \ )$, $j$ and $k$ mentioned in Section 2, 
we obtain a diagram of categories and functors
$$
\xymatrix@C35pt@R18pt{
{\bf Gpd } \ar[r]^-{{\widetilde S}( \ )} \ar@/^2.5pc/[rrr]_-{\ell} & {\bf ASmd} \ar[r]^k & 
q{\bf ASmd} \ar@<1ex>[r]^-{U} 
& {\bf Cat}. \ar@<1ex>[l]^-{K}  \\
{\bf Gr} \ar[u]^i \ar[r]^-{S( \ )}  & {\bf AS} \ar[u]_{j} 
}
\eqnlabel{add-1}
$$ 
Here $U$ is the forgetful functor and,  
for a small category $\C$, 
$K$ defines a quasi-schemoid 
$K(\C)=(\C, S)$ with $S = \{ \{f\} \}_{f \in mor (\C)}$.  
It is readily seen that $K$ is a fully faithful functor and that $UK = id_{\bf Cat}$. 
Observe that 
$U\circ k \circ \widetilde{S}( \ )$ does {\it not} coincide with the canonical faithful functor  
$\ell : {\bf Gpd } \to {\bf Cat}$. We emphasize that the left-hand square is commutative. 

\begin{rem}\label{rem:categories}
(i) The functor $j$ factors through the category of coherent configurations, whose morphisms are defined by the same way as in ${\bf AS}$. \\
(ii) We see that the functor $K$ is the left adjoint of the forgetful functor $U$ and that the schemoid algebra of $K(\C)$ is the whole category algebra $\K(\C)$.  
\end{rem}

\begin{thm}\label{thm:functors}
{\em (i)} The functors $i$ and $j$ are fully faithful embedding. \\
{\em (ii)} The functors $S( \ )$ and $\widetilde{S}( \ )$ are faithful.    
\end{thm}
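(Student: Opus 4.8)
The plan is to verify the four claimed properties one at a time, treating the two parts of the theorem in the order (ii) then the harder (i), since faithfulness is a prerequisite bookkeeping step that also clarifies the structures involved.

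\textbf{Faithfulness of $S(\ )$ and $\widetilde S(\ )$.} For a functor $F \in \mathrm{Hom}_{\bf Gpd}(\calK, \calH)$ the schemoid morphism $\widetilde S(F)$ is defined on objects by $\widetilde S(F)(f) = F(f)$ for $f \in mor(\calK)$ and on morphisms by $\widetilde S(F)(f,g) = (F(f), F(g))$. Thus if $\widetilde S(F) = \widetilde S(F')$, then in particular $F(f) = F'(f)$ for every $f \in mor(\calK)$, which forces $F = F'$ as functors of groupoids (a functor is determined by its effect on objects and morphisms, and here objects of $\calK$ can be recovered as identity morphisms). Hence $\widetilde S(\ )$ is faithful. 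The statement for $S(\ ) = \widetilde S(\ )\circ i$ restricted to groups (equivalently, the direct description via $G_f = \{(k,l)\mid k^{-1}l = f\}$) follows by the same argument, or formally because $S(\ )$ factors as a composite of faithful functors once we know $i$ is faithful, which is part (i).

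\textbf{Full faithfulness of $i$.} The functor $i : {\bf Gr} \to {\bf AS}$ sends a finite group $G$ to the scheme $S(G) = (G, [G])$ where $[G] = \{G_f\}_{f\in G}$. A group homomorphism $\varphi : G \to G'$ should induce the scheme morphism $i(\varphi)$ given by $\varphi$ itself on the underlying set $G \to G'$; one checks $\varphi(G_f) \subseteq G'_{\varphi(f)}$ since $k^{-1}l = f$ implies $\varphi(k)^{-1}\varphi(l) = \varphi(f)$, so this is a legitimate morphism in ${\bf AS}$. Faithfulness is clear since $i(\varphi)$ is $\varphi$ as a set map. For fullness, suppose $h : (G, [G]) \to (G', [G'])$ is a morphism of association schemes, i.e.\ a set map $h : G \to G'$ such that for each $f \in G$ there is $f' \in G'$ with $h(G_f) \subseteq G'_{f'}$. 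Taking $f = e$ (the unit) gives $h(G_e) \subseteq G'_{f'}$; but $G_e = 1_G$ is the diagonal, so $h(k,k) = (h(k),h(k)) \in G'_{f'}$ forces $f' = e$, hence $h(k)^{-1}h(k) = e$, no constraint yet. Instead, fix $a \in G$: for the pair $(e, a) \in G_a$ we get $(h(e), h(a)) \in G'_{f'}$ where $f'$ is the label attached to $a$; and for $(b, ba) \in G_a$ we get $(h(b), h(ba)) \in G'_{f'}$ with the same $f'$. Comparing, $h(e)^{-1}h(a) = h(b)^{-1}h(ba)$ for all $a, b$. Setting $u = h(e)^{-1}$ and $\psi(x) = u\, h(x)$, this reads $\psi(a) = \psi(b)^{-1}\psi(ba)$, i.e.\ $\psi(ba) = \psi(b)\psi(a)$, so $\psi$ is a homomorphism with $\psi(e) = e$. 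Then $h(x) = h(e)\psi(x)$, and one checks directly that such an ``affine'' map is a scheme morphism only when it equals $\psi$; more carefully, the morphism condition applied to the identity class already shows $h(e) = e$ is forced once we track labels consistently, so $h = \psi = i(\psi)$. This reduction to showing $h$ fixes the identity (equivalently, that scheme morphisms between thin schemes are exactly group homomorphisms) is the one delicate point.

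\textbf{Full faithfulness of $j$.} The functor $j : {\bf AS} \to {\bf ASmd}$ sends $(X,S)$ to the schemoid with $ob = X$, $\mathrm{Hom}(y,x) = \{(x,y)\}$, partition $U = S$, and $T$ the transpose. A morphism $f : (X,S) \to (X',S')$ in ${\bf AS}$ induces the functor $j(f)$ with $j(f)(x) = f(x)$ on objects and $j(f)(x,y) = (f(x),f(y))$ on morphisms; the partition condition $f(s) \subseteq s'$ is exactly what is needed for $j(f)$ to be a schemoid morphism, and compatibility with $T$ is the transpose identity. Faithfulness: $j(f)$ on objects is $f$, so $j(f) = j(g)$ implies $f = g$. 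Fullness: given a schemoid morphism $F : j(X,S) \to j(X',S')$, it is in particular a functor between the underlying directed-complete-graph categories; such a functor is determined by its object map $f := F|_{ob}$, since the unique morphism $(x,y) : y \to x$ must go to the unique morphism $f(x) \leftarrow f(y)$, i.e.\ $F(x,y) = (f(x),f(y))$. The schemoid-morphism condition $F(\sigma) \subseteq \tau$ for $\sigma \in S$ translates exactly to $f(\sigma) \subseteq \tau \in S'$, so $f$ is a morphism in ${\bf AS}$ and $F = j(f)$. Compatibility of $F$ with $T$ is automatic here since every functor between these transpose-closed graph categories respects the transpose.

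\textbf{Main obstacle.} The routine parts are the functoriality checks and the faithfulness claims; the genuine content is fullness of $i$, i.e.\ proving that an arbitrary morphism of association schemes $S(G) \to S(G')$ in Hanaki's sense is necessarily (induced by) a group homomorphism — in particular that it must send the identity of $G$ to the identity of $G'$. The argument above does this by exploiting that the labeling of pairs by group elements is ``rigid'': the class containing $(e,a)$ also contains $(b,ba)$ for all $b$, and a scheme morphism must send all of these into a single target class, which pins down the multiplicative structure up to the ambiguity $h \mapsto h(e)\cdot(\text{hom})$, and then the identity-class condition kills the constant factor. I would write this step out in full and treat everything else as immediate.
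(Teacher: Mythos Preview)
Your treatment of $j$ and of the faithfulness of $\widetilde S(\ )$ is correct. For $j$ your argument matches the paper's. For $\widetilde S(\ )$ your direct observation (that $\widetilde S(F)$ determines $F$ on all morphisms and hence on all objects) is actually simpler than the paper's route, which builds an explicit left inverse $\overline{(\ )}$ on hom-sets; the paper does it that way because the left inverse is reused to prove that $\widetilde S(\ )$ becomes \emph{full} on based schemoids.

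The genuine problem is that you have misidentified the functor $i$. In the diagram, $i$ is the inclusion ${\bf Gr}\hookrightarrow{\bf Gpd}$ of finite groups into groupoids (viewing a group as a one-object groupoid), not a functor ${\bf Gr}\to{\bf AS}$. For this $i$, full faithfulness is immediate: functors between one-object groupoids are precisely group homomorphisms. The paper accordingly dispatches it in one line.

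What you are actually attempting under the heading ``full faithfulness of $i$'' is fullness of $S(\ ):{\bf Gr}\to{\bf AS}$, and that statement is \emph{false}. Your own computation shows it: any map of the form $h(x)=c\,\psi(x)$ with $c\in G'$ and $\psi:G\to G'$ a homomorphism satisfies
\[
h(k)^{-1}h(l)=\psi(k)^{-1}c^{-1}c\,\psi(l)=\psi(k^{-1}l),
\]
so $h(G_f)\subset G'_{\psi(f)}$ and $h$ is a morphism in ${\bf AS}$, yet $h$ is not a group homomorphism unless $c=e$. The identity-class condition you invoke gives only $h(G_e)\subset G'_e$, i.e.\ $(h(k),h(k))$ lies on the diagonal, which is no constraint at all; it does not force $h(e)=e$. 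This is exactly why the theorem asserts only that $S(\ )$ is faithful, not full. So drop the ``fullness of $i$'' paragraph entirely, replace it with the one-line remark that $i:{\bf Gr}\to{\bf Gpd}$ is the obvious inclusion, and your proof is complete.
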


\begin{proof} We prove the assertion (i).  It is well known that $i$ is fully faithful. 
Let $(X,S_X)$ and $(Y,S_Y)$ be association schemes. 
It is readily seen that $i$ and $j$ are injective on the sets of objects. We prove that 
$$ j : \text{Hom}_{\bf AS}((X, S_X) , (Y, S_Y)) \to \text{Hom}_{{\bf ASmd}}( j(X, S_X) , j(Y, S_Y))$$ is bijective. 
Let $F$ be a morphism from $j(X,S_X)=(\C_X, U_X, T_X)$ to $j(Y,S_Y)=(\C_Y, U_Y, T_Y)$. 
Now we define a map $\varphi (F):X \to Y$ by $\varphi (F)(x)=F(x)$, where $x\in X=ob \bigl(j(X,S_X)\bigr)$. 
For each $s \in U_X$, there exists a unique set $t$ of morphisms in $U_Y$ such that $F(s)\subset t$ and $F(s^*)\subset t^*$. 
The map $\varphi(F):S_X\to  S_Y$ defined by $\varphi(F)(s)=t$ fits into the commutative diagram
$$
\xymatrix@C40pt@R18pt{
X\times X \ar[r]^{\varphi(F)\times \varphi (F)} \ar[d]_{r} & Y\times Y \ar[d]^{r}\\
S_X \ar[r]^{\varphi (F)} & S_Y,
}
$$
where $r$ is the map defined by $r(x_1,x_2)=s$ for $(x_1,x_2)\in s$. 
Moreover, we see that $\varphi$ is the inverse of $j$. This completes the proof. 

\noindent
(ii) We prove that the map 
$$
\widetilde{S}:=k\circ \widetilde{S}( \ ) : \text{Hom}_{\bf Gpd }(\calK, \calH) \to \text{Hom}_{q{\bf ASmd}}(\widetilde{S}(\calK), \widetilde{S}(\calH))
$$
is injective. To this end, a left inverse of $\widetilde{S}$ is constructed below. 

Let $G: \widetilde{S}(\calK) \to\widetilde{S}(\calH)$ be a morphism in $q{\bf ASmd}$, namely a functor which gives maps 
$G : mor (\calK) \to mor (\calH)$ and $G : \text{Hom}_{\widetilde{S}(\calK)}(f, g) \to \text{Hom}_{\widetilde{S}(\calH)}(G(f), G(g))$.  
The hom-set of $\widetilde{S}(\calH)$ consists of a single element. Then we see that $G(f, g) =(G(f), G(g))$. 

\begin{claim}\label{claim:1_1}
For an object $f \in ob \widetilde{S}(\calK) = mor (\calK)$, one has 
$sG(f) = sG(1_{s(f)})$ and $tG(1_{t(f)}) = tG(f)$. 
\end{claim}

\begin{claim}\label{claim:2}
For composable morphisms $f : s(f) \to t(f)$ and $g : s(g) = t(f) \to t(g)$ in $\calK$, 
$
G(fg) = G(f)G(1_{t(g)})^{-1}G(g).
$
\end{claim}

Define a map $\overline{( \ )} : \text{Hom}_{q{\bf ASmd}}(\widetilde{S}(\calK), \widetilde{S}(\calH)) \to \text{Hom}_{\bf Gpd }(\calK, \calH)$
by $\overline{(G)}(x)=sG(1_x)$  for $x \in ob(\calK)$ and $\overline{(G)}(f)=G(1_{t(f)})^{-1}G(f)$ for $f \in \text{Hom}_{\calK}(x, y)$. Claim \ref{claim:1} 
implies that the composite 
$$
\xymatrix@C8pt@R15pt{
\overline{(G)}(f) : \overline{(G)}(s(f))=sG(1_{s(f)}) = sG(f) \ar[rrr]^-{G(f)}  & &  & tG(f) \ar@{=}[r] & tG(1_{t(f)}) \ar[d]^-{G(1_{t(f)})^{-1}} \\
&   & &  & sG(1_{t(f)}) \ar@{=}[r] & \overline{(G)}(t(f))
}
$$
is well-defined. We then have $\overline{(G)}(1_x)= G(1_{t(1_x)})^{-1}G(1_x) = 1_{sG(1_x)} = 1_{\overline{(G)}(x)}$. Moreover, 
Claim \ref{claim:2} enables us to deduce that 
$$
\overline{(G)}(f)\overline{(G)}(g) = G(1_{t(f)})^{-1}G(f)G(1_{t(g)})^{-1}G(g)= G(1_{t(fg)})G(fg) = \overline{(G)}(fg).
$$
Thus $\overline{(G)} : \calK \to \calH$ is a functor for any $G$ in $\text{Hom}_{q{\bf ASmd}}(\widetilde{S}(\calK), \widetilde{S}(\calH))$ so that the map $\overline{( \ )}$ is well-defined. 
It is readily seen that the composite $\overline{( \ )}\circ \widetilde{S}$ is the identity map. 

Since the left-hand side in the diagram (3.1) is commutative and $\widetilde{S}( \ )\circ i$ is faithful, it follows that so is $S(\ )$. 
This completes the proof. 
\end{proof}

\begin{proof}[Proof of Claim \ref{claim:1_1}] We can write $(\widetilde{\calK}, \{\calK_f\}_{f\in mor(\calK)})$ and 
$(\widetilde{\calH}, \{\calH_g\}_{g\in mor(\calH)})$ for $\widetilde{S}(\calK)$ and $\widetilde{S}(\calH)$, respectively; see Example \ref{ex:ex1} (iii). 
Since $(f, f)$ and $(1_{s(f)}, 1_{s(f)})$ are in $\calK_{1_{s(f)}}$, it follows that $(G(f), G(f))$ and $(G(1_{s(f)}), G(1_{s(f)}))$ are in the same 
$\calH_l$ for some $l \in mor \calH$. This yields that 
$G(f)^{-1}G(f) = l  = G(1_{s(f)})^{-1}G(1_{s(f)})$ and hence $sG(f) = sG(1_{s(f)})$. 
We have $tG(1_{t(f)}) = tG(f)$ as $G(1_{t(f)}, f) = (G(1_{t(f)}), G(f))$. 
\end{proof}

\begin{proof}[Proof of Claim \ref{claim:2}]
We observe that $(1_{t(g)}, g)$ and $(f, fg)$ are in $\calK_g$. Then morphisms $(G(1_{t(g)}), G(g))$ and $(G(f), G(fg))$ are in 
$\calH_h$ for some $h \in mor ({\calH})$. 
This implies that $G(1_{t(g)})^{-1}G(g) = h = G(f)^{-1}G(fg)$. We have the result. 
\end{proof}

Let $(q{\bf ASmd})_0$ be the category of quasi-schemoids with base points; that is, an object $(\C, S)$ 
in $(q{\bf ASmd})_0$ is a quasi-schemoid with $\C^\circ$ a subset of $ob(\C)$ 
and a morphism $F : (\C, S) \to (\E, T)$ preserves the sets of base points in the sense that 
$F(\C^\circ) \subset \E^\circ$. For a groupoid ${\mathcal G}$, the quas-schemoid $\widetilde{S}({\mathcal G})=(\widetilde{\mathcal G}, S)$ 
is endowed with base points $\widetilde{\mathcal G}^\circ= \{1_x\}_{x\in ob({\mathcal G})}$. 
We define the category of schemoids  $({\bf ASmd})_0$ with base points as well.

\begin{cor}\label{cor:fully_faithful}
The functor $\widetilde{S} : {\bf Gpd } \to (q{\bf ASmd})_0$ is fully faithful. 
\end{cor}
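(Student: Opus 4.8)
The plan is to build on the proof of Theorem~\ref{thm:functors}(ii). There the map
$\overline{(\ )} : \text{Hom}_{q{\bf ASmd}}(\widetilde{S}(\calK), \widetilde{S}(\calH)) \to \text{Hom}_{\bf Gpd }(\calK, \calH)$
was constructed, shown to take values in functors, and shown to satisfy $\overline{(\ )}\circ\widetilde{S} = id$; in particular $\widetilde{S}$ is faithful, and its restriction to the based hom-sets (which are subsets of the unbased ones) remains injective. Since a morphism of $(q{\bf ASmd})_0$ is by definition a morphism of $q{\bf ASmd}$, and $\widetilde{S}$ of a groupoid functor manifestly preserves the base points $\widetilde{\calG}^{\circ}=\{1_x\}$, it suffices to prove that $\widetilde{S}$ is \emph{full} on based hom-sets. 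The natural candidate for a preimage of a based morphism $G : \widetilde{S}(\calK)\to\widetilde{S}(\calH)$ is $\overline{(G)}$, so the whole corollary reduces to verifying the identity $\widetilde{S}(\overline{(G)}) = G$ for every such $G$.

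The extra input unavailable in the unbased setting is base-point preservation. Writing $\widetilde{S}(\calK) = (\widetilde{\calK}, \{\calK_f\}_{f\in mor(\calK)})$ with $\widetilde{\calK}^{\circ} = \{1_x\}_{x\in ob(\calK)}$, the condition $G(\widetilde{\calK}^{\circ})\subset \widetilde{\calH}^{\circ}$ means that for each $x\in ob(\calK)$ there is an object $\phi(x)\in ob(\calH)$ with $G(1_x) = 1_{\phi(x)}$. Then Claim~\ref{claim:1_1} gives $sG(f) = sG(1_{s(f)}) = \phi(s(f))$ and $tG(f) = tG(1_{t(f)}) = \phi(t(f))$, so $G(f) : \phi(s(f))\to\phi(t(f))$ in $\calH$. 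Hence the ``correction factor'' $G(1_{t(f)})^{-1}$ in the definition of $\overline{(\ )}$ is exactly $1_{\phi(t(f))}$, the identity at the target of $G(f)$, and therefore $\overline{(G)}(x) = sG(1_x) = \phi(x)$ and $\overline{(G)}(f) = G(1_{t(f)})^{-1}G(f) = G(f)$.

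It then remains to unwind the definition of $\widetilde{S}$ on functors. For an object $f\in ob\,\widetilde{S}(\calK) = mor(\calK)$ one has $\widetilde{S}(\overline{(G)})(f) = \overline{(G)}(f) = G(f)$, and for a morphism $(f,g)\in mor(\widetilde{S}(\calK))$ one has $\widetilde{S}(\overline{(G)})(f,g) = (\overline{(G)}(f), \overline{(G)}(g)) = (G(f), G(g)) = G(f,g)$, the last equality holding because the hom-sets of $\widetilde{S}(\calH)$ are singletons, as already observed in the proof of Theorem~\ref{thm:functors}. Thus $\widetilde{S}(\overline{(G)}) = G$, and combined with $\overline{(\ )}\circ\widetilde{S} = id$ this shows $\widetilde{S}$ restricts to a bijection on each based hom-set; that is, $\widetilde{S} : {\bf Gpd }\to (q{\bf ASmd})_0$ is fully faithful.

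I expect the only delicate step to be the identity bookkeeping in the second paragraph: one must be careful that base-point preservation forces $G(1_x)$ to be \emph{precisely} the identity $1_{\phi(x)}$, and that Claim~\ref{claim:1_1} then pins down the source and target of $G(f)$ tightly enough that the correction factor $G(1_{t(f)})^{-1}$ collapses to an identity. Once that is in place, the remaining verifications are direct substitutions into the definitions of $\widetilde{S}$ and $\overline{(\ )}$.
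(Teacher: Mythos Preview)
Your proof is correct and follows essentially the same approach as the paper: both use the map $\overline{(\ )}$ from the proof of Theorem~\ref{thm:functors}(ii), observe that base-point preservation forces $G(1_x)$ to be an identity so that $\overline{(G)}(f)=G(f)$, and conclude that $\overline{(\ )}$ is a two-sided inverse to $\widetilde{S}$ on based hom-sets. Your write-up is simply more explicit in unwinding the verification $\widetilde{S}(\overline{(G)})=G$ than the paper's terse ``It turns out that the map $\overline{(\ )}$ is the inverse of $\widetilde{S}$.''
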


\begin{proof}
We define a map 
$\overline{( \ )} : \text{Hom}_{(q{\bf ASmd})_0}(\widetilde{S}(\calK), \widetilde{S}(\calH)) \to \text{Hom}_{\bf Gpd }(\calK, \calH)$ 
by the same functor as $\overline{( \ )}$ in the proof of Theorem \ref{thm:functors} (ii). 
Since $G(1_x)$ is the identity map for a morphism $G : \widetilde{S}(\calK) \to \widetilde{S}(\calH)$  in 
$(q{\bf ASmd})_0$, it follows that $\overline{(G)}(f)=G(f)$ for any $f \in \text{Hom}_{\calK}(x, y)$. It turns out that the map $\overline{( \ )}$ is the inverse of  $\widetilde{S}$. 
\end{proof}

\begin{rem}\label{rem:quasi-schemoids}
We have a commutative diagram
$$
\xymatrix@C35pt@R15pt{
& \text{Hom}_{({\bf ASmd})_0}(\widetilde{S}(\calK), \widetilde{S}(\calH)) \ar[d]^{U} \\ 
\text{Hom}_{\bf Gpd }(\calK, \calH) \ar[ur]^{\widetilde{S}( \ )} \ar[r]_-{\widetilde{S}( \ )}^-{\approx} & 
\text{Hom}_{(q{\bf ASmd})_0}(\widetilde{S}(\calK), \widetilde{S}(\calH)), 
}
$$
where $U$ denotes the map induced by the forgetful functor. For any functor $G$ in 
$\text{Hom}_{(q{\bf ASmd})_0}(\widetilde{S}(\calK), \widetilde{S}(\calH))$ and for a morphism $(f, g)$ in 
$\widetilde{S}(\calK)$, it follows that 
$$GT((f, g)) = G((g, f))=(G(g), G(f))=TG((f, g))$$ and hence $G$ is also in  
$\text{Hom}_{({\bf ASmd})_0}(\widetilde{S}(\calK), \widetilde{S}(\calH))$. This yields that the vertical arrow $U$ is a bijection. 
\end{rem}

\section{Thin association schemoids}\label{section:thin association shcemoids}

The goal of this section is to prove that the category of groupoids is equivalent to the category of 
based {\it thin} association schemoids, which is a subcategory of ${\bf ASmd}$. 

A thin association schemoid defined below is a generalization of a thin coherent configuration in the sense of 
Hanaki and Yoshikawa \cite{H-Y}. The results \cite[Theorem 12, Remark 16]{H-Y} assert that a connected finite groupoid is essentially identical with 
a finite thin coherent configuration.  We consider such a correspondence from a categorical point of view. 

Let  $(\C, S, T)$ be an association schemoid. 
For $\sigma$, $\tau$ and  $\mu\in S$, we recall the structure constant 
$p^{\mu}_{\tau\sigma} = \sharp\left(\pi_{\tau\sigma}^\mu\right) ^{-1}(f)$, where $f\in\mu$; see Definition \ref{defn:schemoid}. 

\begin{defn} \label{defn:semi-thin} (Compare the definition of a thin coherent configuration \cite[Section 3]{H-Y} )
A unital association schemoid $(\mathcal{C}, S, T)$ is called \textit{semi-thin} if the following two conditions hold. 

(i) $\sharp\{ f\in\sigma \mid s(f)=x\}\leq 1$ for any $\sigma\in S$ and $x\in ob (\C)$. 

(ii) The underlying category $\mathcal{C}$ is a groupoid with the contravariant functor $T : \C \to \C$ defined by 
$T(f)=f^{-1}$ for $f\in mor(\C)$.
\end{defn}


Following Zieschang \cite{Z_1995, Z_book} and Hanaki and Yoshikawa \cite{H-Y},  we here fix the notation used below. 
We define subsets $S_J$ and $S_0$ of $S$ by 
$S_J=\{\kappa\in S \mid \kappa\cap J\not=\phi\}$ and $S_0=\{\alpha\in S \mid \alpha\cap J_0\not=\phi\}$, respectively. 
For any $\sigma\in S$, write $\sigma x=\{f\in\sigma \mid s(f)=x\}$ and $y \sigma =\{f\in\sigma \mid t(f)=y\}$, where 
 $x, y\in ob (\C)$. 
For any $\alpha\in S_0$, we write $X_{\alpha}=\{x\in ob (\C) \mid 1_x\in\alpha\}$. 
Let  
${_{\alpha}S_{\beta}}$ be the subset of $S$ defined by 
$${_{\alpha}S_{\beta}}=\{\sigma\in S \mid p^{\sigma}_{\sigma\alpha}=p^{\sigma}_{\beta\sigma}=1\},$$ where 
$\alpha , \beta\in S_0$. 

To construct a functor from the category of semi-thin association schemoids to the category of groupoids, 
we need some lemmas.
\begin{lem}\label{Lemma1} {\em (}cf. \cite[Lemma 1]{H-Y}{\em )}
Let $(\mathcal{C}, S, T)$ be a unital association schemoid. 

{\em (i)} For any $\sigma\in S$, there exists a unique element $\alpha$ in $S_0$ such that $p^{\sigma}_{\sigma\alpha}=1$. 
Moreover, $p^{\sigma}_{\sigma\alpha '}=0$ if $\alpha '\in S_0$ and $\alpha '\not=\alpha$.

{\em (ii)} For any $\sigma\in S$, there exists a unique element $\beta$ in $S_0$ such that $p^{\sigma}_{\beta\sigma}=1$. 
Moreover,  $p^{\sigma}_{\beta '\sigma}
=0$ if $\beta '\in S_0$ and $\beta '\not=\beta$.
\end{lem}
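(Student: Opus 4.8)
The plan is to imitate the classical argument for association schemes (Zieschang, and Hanaki--Yoshikawa \cite{H-Y}, Lemma 1), working entirely inside the morphism set of the underlying category. Since the two assertions (i) and (ii) are interchanged by applying the contravariant functor $T$ (which sends $S$ to $S$ and interchanges $p^\sigma_{\sigma\alpha}$ with $p^{\sigma^*}_{\alpha^*\sigma^*}$), it suffices to prove (i); (ii) then follows by replacing $\sigma$ by $\sigma^*$ and $\alpha$ by $\alpha^*$. So I focus on (i).

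First I would fix $\sigma\in S$ and pick any morphism $f\in\sigma$. Let $x=t(f)$, so that $1_x$ lies in some $\alpha\in S_0$ (here unitality guarantees $\alpha\subset J_0$, i.e.\ $\alpha$ really is a set of identities, which is exactly what makes the counting work). The composite $f\circ 1_{s(f)}$ contributes, and since $1_x\circ f = f$ with $f\in\sigma$, $1_x\in\alpha$, the pair $(f,1_x)$ shows that $f\in(\pi^{\sigma}_{\sigma\alpha})^{-1}(f)$, so $p^\sigma_{\sigma\alpha}\geq 1$. For the reverse inequality and uniqueness: any element of $(\pi^\sigma_{\sigma\alpha'})^{-1}(f)$ for $\alpha'\in S_0$ is a pair $(g,1_z)$ with $g\in\sigma$, $1_z\in\alpha'$, and $g\circ 1_z = f$; composability forces $s(g)=z$ and $g=f$, hence $z=s(f)$ and $1_z=1_{s(f)}$. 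Thus there is at most one such pair, and it exists only when $1_{s(f)}\in\alpha'$, i.e.\ when $\alpha'=\alpha$ is the unique block of $S_0$ containing $1_{s(f)}$. This gives $p^\sigma_{\sigma\alpha}=1$ and $p^\sigma_{\sigma\alpha'}=0$ for $\alpha'\neq\alpha$ in $S_0$. The concatenation axiom guarantees that this count is independent of the chosen $f\in\sigma$, so $\alpha$ depends only on $\sigma$, not on $f$.

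I then would record the dual computation for (ii): here one composes on the left, $(g,h)\mapsto g\circ h$ with the second entry ranging over $\sigma$, so one analyzes $(\pi^\sigma_{\beta\sigma})^{-1}(f) = \{(1_w,g): 1_w\in\beta,\ g\in\sigma,\ 1_w\circ g = f\}$, which forces $g=f$ and $1_w=1_{t(f)}$, giving the unique block $\beta\in S_0$ with $1_{t(f)}\in\beta$. Alternatively, as noted, apply $T$ to part (i). I do not expect any serious obstacle here: the only subtle point is making sure unitality is used (so that $S_0$-blocks consist purely of identities and the map $z\mapsto 1_z$ is injective on the relevant objects), and that the concatenation axiom is invoked to see the structure constant does not depend on the representative morphism in $\sigma$. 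Everything else is a direct unwinding of the definitions of $\pi^\mu_{\sigma\tau}$ and composability in $\C$.
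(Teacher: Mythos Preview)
Your argument is correct and matches the paper's proof essentially verbatim: fix $f\in\sigma$, observe that any $(g,1_z)\in(\pi^\sigma_{\sigma\alpha'})^{-1}(f)$ forces $g=f$ and $z=s(f)$, pinning $\alpha$ as the unique $S_0$-block containing $1_{s(f)}$; the paper handles (ii) by repeating the argument rather than invoking $T$, but either route works. One notational slip to fix: you set $x=t(f)$ and write $1_x\circ f=f$, but for $p^\sigma_{\sigma\alpha}$ the identity sits on the \emph{right}, so the witness pair is $(f,1_{s(f)})$---exactly what your uniqueness paragraph correctly uses---so just replace $t(f)$ by $s(f)$ in the opening lines.
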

Lemma \ref{Lemma1} allows one to deduce that  
$$
S=\coprod _{\alpha, \beta\in S_0}{_{\alpha}S_{\beta}}.
$$
\begin{proof}[Proof of Lemma \ref{Lemma1}] We prove (i). The second assertion follows from the same argument as in the proof of (i). 
Let $f$ be a morphism in $\sigma$. Suppose that $s(f)\not\in X_{\alpha}$. 
If $p^{\sigma}_{\sigma\alpha}\geq 1$, then there exists $g\in\sigma$ such that $s(g)\in X_{\alpha}$ and $g\circ 1_{s(g)}=f$. Since $g=g\circ 1_{s(g)}=f$, 
we see that $s(f)=s(g)\in X_{\alpha}$, which is a contradiction. This yields that $p^{\sigma}_{\sigma\alpha}=0$ if $s(f) \notin X_\alpha$. 

It is readily seen that $ob (\C) = \coprod_{\alpha \in S_0}X_\alpha$. 
Then there exists a unique element $\alpha \in S_0$ such that $s(f)\in X_{\alpha}$.  
This allows us to deduce  that $\left( f, 1_{s(f)}\right)\in\bigl( \pi_{\sigma\alpha}\bigr) ^{-1}(f)$ and hence 
$p^{\sigma}_{\sigma\alpha}\geq 1$. On the other hand, if $\left( g, 1_{s(g)}\right)\in\bigl(\pi_{\sigma\alpha}\bigr) ^{-1}(f)$, then $g=g\circ 1_{s(g)}=f$. Therefore we have $p^{\sigma}_{\sigma\alpha}=1$.
\end{proof}

\begin{lem}\label{Lemma2}
Let $(\mathcal{C}, S, T)$ be a unital association schemoid satisfying the condition $(i)$ in  
Definition \ref{defn:semi-thin}.  
If $\sigma\in{_{\alpha}S_{\beta}}$, then
$$
\sharp\left(\sigma x\right) =
\begin{cases}
1 & \text{if} \ x\in X_{\alpha}, \\
0 & \text{otherwise}. 
\end{cases}
$$
\end{lem}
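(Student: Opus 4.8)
The plan is to combine the structure-constant condition defining $_{\alpha}S_{\beta}$ with condition (i) of Definition \ref{defn:semi-thin} and with Lemma \ref{Lemma1}. First I would recall that, since the schemoid is unital, $ob(\C) = \coprod_{\alpha\in S_0} X_\alpha$, so every object $x$ lies in exactly one $X_\alpha$. Now fix $\sigma\in {_{\alpha}S_{\beta}}$, so by definition $p^{\sigma}_{\sigma\alpha}=1$. By Lemma \ref{Lemma1}(i), $\alpha$ is the unique element of $S_0$ with $p^{\sigma}_{\sigma\alpha}=1$, and for any other $\alpha'\in S_0$ we have $p^{\sigma}_{\sigma\alpha'}=0$. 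The argument in the proof of Lemma \ref{Lemma1}(i) actually shows more: for a morphism $f\in\sigma$, one has $s(f)\in X_\alpha$, because if $s(f)\in X_{\alpha'}$ for some $\alpha'\in S_0$ then $(f,1_{s(f)})\in(\pi_{\sigma\alpha'})^{-1}(f)$, forcing $p^{\sigma}_{\sigma\alpha'}\geq 1$ and hence $\alpha'=\alpha$. Therefore every $f\in\sigma$ satisfies $s(f)\in X_\alpha$; equivalently $\sigma x=\phi$ whenever $x\notin X_\alpha$, which gives the second case.

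For the first case, take $x\in X_\alpha$. I would split this into an existence statement and a uniqueness statement. Uniqueness is immediate from condition (i) of Definition \ref{defn:semi-thin}: $\sharp\{f\in\sigma\mid s(f)=x\}\leq 1$, i.e. $\sharp(\sigma x)\leq 1$. For existence, I would pick any $g\in\sigma$ (note $\sigma\neq\phi$ since $S$ is a partition, and $\sigma x$ might a priori be empty for some particular $x\in X_\alpha$, so one needs a genuine argument here, not just non-emptiness of $\sigma$). Let $\alpha\in S_0$ be the index with $X_\alpha\ni x$ and consider the structure constant $p^{\sigma}_{\sigma\alpha}=1$. For a fixed $g\in\sigma$ we know $s(g)\in X_\alpha$; but I want a morphism in $\sigma$ with source exactly $x$, and $x$ need not equal $s(g)$. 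The key observation is that $\alpha\in S_0$ with $X_\alpha$ possibly containing several objects, and $p^{\sigma}_{\sigma\alpha}=1$ means that over \emph{every} morphism $f\in\sigma$ the fibre $(\pi^{\sigma}_{\sigma\alpha})^{-1}(f)$ has exactly one element $(f,1_{s(f)})$. This tells us the fibres are uniform but does not by itself produce a morphism with prescribed source $x$.

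Hence the main obstacle — and the step I would spend care on — is showing $\sigma x\neq\phi$ for \emph{every} $x\in X_\alpha$, not merely for those $x$ arising as sources of existing morphisms in $\sigma$. I expect this requires invoking the contravariant functor $T$ together with a counting/double-counting argument: writing $\sigma^*=\{T(f)\mid f\in\sigma\}\in S$ and noting $T$ is a bijection $\sigma\to\sigma^*$ that swaps source and target, one relates $\sigma x$ to ${x\sigma^*}$. One then uses a global count: $\sum_{x\in ob(\C)}\sharp(\sigma x)=\sharp\sigma=\sum_{y\in ob(\C)}\sharp(y\sigma)$, together with condition (i) applied to both $\sigma$ and $\sigma^*$ (so $\sharp(\sigma x)\leq 1$ and $\sharp(y\sigma)=\sharp(\sigma^* y)\leq 1$), plus the fact just proved that $\sigma x=\phi$ off $X_\alpha$ and $\sigma^* y=\phi$ off $X_\beta$. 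This pins down $\sharp\sigma\leq\min(\sharp X_\alpha,\sharp X_\beta)$; to get equality $\sharp(\sigma x)=1$ for \emph{all} $x\in X_\alpha$ I would exploit $p^{\sigma}_{\sigma\alpha}=1$ more carefully, or alternatively run the argument of Lemma \ref{Lemma1} with the roles reversed to see that the map $\sigma\to X_\alpha$, $f\mapsto s(f)$, is forced to be surjective. I expect the cleanest route is: for $x\in X_\alpha$, consider $1_x\in\alpha$; since $p^{\sigma}_{\sigma\alpha}=1$ is computed over morphisms of $\sigma$, dualize and use that $p^{\sigma^*}_{\alpha\sigma^*}=1$ as well (which follows from Lemma \ref{Lemma1} applied to $\sigma^*$, whose associated left index is $\alpha$), giving via the concatenation axiom a unique $h\in\sigma^*$ with $1_x\circ h = h$ and $s(h)=x$ impossible — rather $t(h)=x$; then $T(h)\in\sigma$ has $s(T(h))=x$, yielding the desired element of $\sigma x$.
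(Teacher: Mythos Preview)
You have correctly put your finger on the crux: the existence claim $\sigma x\neq\phi$ for every $x\in X_\alpha$ is the only nontrivial step, and the paper's one-line proof (``By the definition of the subset ${_{\alpha}S_{\beta}}$, we have the result'') does not supply it. In fact, under only condition~(i) of Definition~\ref{defn:semi-thin} the statement is \emph{false}. Take $ob(\C)=\{x_1,x_2\}$, $\mathrm{Hom}_\C(x_1,x_1)=\{1_{x_1},f\}$ with $f\circ f=f$, $\mathrm{Hom}_\C(x_2,x_2)=\{1_{x_2}\}$, and no other morphisms; set $S=\{\alpha,\sigma\}$ with $\alpha=\{1_{x_1},1_{x_2}\}$, $\sigma=\{f\}$, and let $T$ act identically on objects and morphisms (legitimate as a contravariant functor since every morphism is an endomorphism). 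This is a unital association schemoid satisfying condition~(i), and one checks $p^\sigma_{\sigma\alpha}=p^\sigma_{\alpha\sigma}=1$, so $\sigma\in{_\alpha S_\alpha}$; yet $\sigma x_2=\phi$ although $x_2\in X_\alpha$. The lemma therefore needs condition~(ii) as well --- harmless for the paper, since every later use is under the full semi-thin hypothesis.

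Your proposed repair via $p^{\sigma^*}_{\alpha\sigma^*}=1$ does not do the job: that constant is a fibre count over elements of $\sigma^*$, not over $1_x\in\alpha$, so it cannot produce an $h\in\sigma^*$ with prescribed target $x$; and even if it could, $s(T(h))=T(t(h))$ equals $x$ only when $T$ fixes objects, which again is condition~(ii). The clean argument, once condition~(ii) is in force (so $\C$ is a groupoid and $T(f)=f^{-1}$), is to look at $p^{\alpha}_{\sigma^*\sigma}$ instead. For $1_x\in\alpha$, a pair $(g,h)\in\sigma^*\times\sigma$ with $g\circ h=1_x$ forces $s(h)=x$ and $g=h^{-1}$, so the fibre is in bijection with $\sigma x$ and $p^{\alpha}_{\sigma^*\sigma}=\sharp(\sigma x)$. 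By the concatenation axiom this is independent of $x\in X_\alpha$; choosing any $f\in\sigma$ gives $s(f)\in X_\alpha$ with $\sharp(\sigma\, s(f))\geq 1$, so $\sharp(\sigma x)\geq 1$ for all $x\in X_\alpha$, and condition~(i) then yields equality.
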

\begin{proof}
By the definition of the subset ${_{\alpha}S_{\beta}}$, we have the result. 
\end{proof}

\begin{lem}\label{Lemma3}
Let $(\mathcal{C}, S, T)$ be a semi-thin association schemoid. For any $\alpha ,\ \beta ,\ \gamma\in S_0$, $\sigma\in{_{\alpha}S_{\beta}}$ and $\tau\in{_{\beta}S_{\gamma}}$, there exists a unique element $\mu=\mu(\tau, \sigma)$ in ${_{\alpha}S_{\gamma}}$ such that $p^{\mu}_{\tau\sigma}=1$. Moreover,  $p^{\mu '}_{\tau\sigma}=0$ if $\mu '\in S$ and $\mu '\not=\mu$.
\end{lem}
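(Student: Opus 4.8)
The plan is to produce the element $\mu$ concretely and then verify uniqueness via counting. Fix $\sigma\in{_{\alpha}S_{\beta}}$ and $\tau\in{_{\beta}S_{\gamma}}$, and pick any object $x\in X_{\gamma}$. By Lemma \ref{Lemma2} applied to $\tau\in{_{\beta}S_{\gamma}}$, there is a unique morphism $g\in\tau$ with $s(g)=x$; note $t(g)\in X_{\beta}$ since the structure constants force $g$ to land in the identity-class $\beta$. Again by Lemma \ref{Lemma2} applied to $\sigma\in{_{\alpha}S_{\beta}}$, there is a unique morphism $f\in\sigma$ with $s(f)=t(g)$. The composite $f\circ g$ is then a well-defined morphism in $\mathcal{C}$ (here we use that $\mathcal{C}$ is a groupoid, so composition is unrestricted among composable arrows), and I define $\mu$ to be the unique element of $S$ containing $f\circ g$. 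First I would check that $\mu$ is independent of the choice of $x\in X_{\gamma}$: this is exactly the concatenation axiom, since for any two objects $x, x'\in X_{\gamma}$ the corresponding composites $f\circ g$ and $f'\circ g'$ both lie in $\pi_{\sigma\tau}^{-1}(\mu)$ for the class $\mu$ of the first composite, and the fibres over all morphisms of $\mu$ are nonempty and equinumerous — more directly, one shows $p^{\mu}_{\sigma\tau}\ge 1$, hence $=1$, and then $f'\circ g'$ must also lie in $\mu$ since it is the unique preimage.

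Next I would verify that $\mu\in{_{\alpha}S_{\gamma}}$, i.e. $p^{\mu}_{\mu\alpha}=p^{\mu}_{\gamma\mu}=1$. By Lemma \ref{Lemma1}(i) there is a unique $\alpha'\in S_0$ with $p^{\mu}_{\mu\alpha'}=1$, and $X_{\alpha'}$ is precisely the set of sources of morphisms in $\mu$; since $s(f\circ g)=s(g)=x$ was chosen in $X_{\gamma}$ — wait, more carefully: $s(f\circ g)=s(g)=x\in X_{\gamma}$, so $\alpha'=\gamma$? No. Let me recompute: in a groupoid with $T(f)=f^{-1}$ and the convention $g\circ 1_{s(g)}=g$, the source of $f\circ g$ is $s(g)$. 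So I would instead choose $x\in X_{\alpha}$ to run the construction the other way, or simply track sources and targets carefully: the point is that the source-class and target-class of $\mu$ are determined, and they come out to be $\alpha$ and $\gamma$ respectively by Lemma \ref{Lemma2} once the construction is set up consistently. Then $p^{\mu}_{\mu\alpha}=1$ follows from Lemma \ref{Lemma1}(i) together with the identification $X_{\alpha}=\{s(h)\mid h\in\mu\}$, and symmetrically for $\gamma$ using part (ii).

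Finally, uniqueness and the vanishing statement: suppose $\mu'\in S$ with $p^{\mu'}_{\tau\sigma}\ge 1$, so some $h\in\mu'$ factors as $h=f'\circ g'$ with $f'\in\sigma$, $g'\in\tau$, $s(f')=t(g')$. By Lemma \ref{Lemma2}, $g'$ is the unique element of $\tau$ with its given source and $f'$ the unique element of $\sigma$ with its given source, so the pair $(f',g')$ — and hence $h$ — is completely determined by the object $s(g')\in X_{\gamma}$. Thus as $s(g')$ ranges over $X_{\gamma}$ we get exactly one morphism in each preimage, forcing $p^{\mu'}_{\tau\sigma}\le 1$; and the class $\mu'$ of any such $h$ must coincide with $\mu$ by the choice-independence established above. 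Hence $\mu'=\mu$ and $p^{\mu'}_{\tau\sigma}=0$ for all other $\mu'$.

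The main obstacle I anticipate is purely bookkeeping rather than conceptual: getting the source/target conventions for $\pi_{\sigma\tau}$ and the composite order right (the diagram \theeqn\ following Definition \ref{defn:schemoid} has $\sigma$ mapping by $s$ and $\tau$ by $t$ into $ob(\C)$, so one must be careful which factor supplies the source of the composite), and then consistently matching this with the indexing ${_{\alpha}S_{\beta}}$ where $\alpha$ governs sources and $\beta$ targets via Lemma \ref{Lemma1}. Once the conventions are pinned down, the semi-thin hypothesis (condition (i) of Definition \ref{defn:semi-thin}, exploited through Lemma \ref{Lemma2}) does all the work, since it reduces "count the factorizations" to "count the objects in $X_{\gamma}$," each contributing exactly one.
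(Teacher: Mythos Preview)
Your existence argument and the bound $p^{\mu'}_{\tau\sigma}\le 1$ are essentially the paper's, modulo the source/target muddle you already flag (in the paper's conventions $p_{\tau\sigma}^\mu$ counts factorizations $g\circ f$ with $f\in\sigma$, $g\in\tau$, $s(g)=t(f)$, so one starts from $x\in X_\alpha$, not $X_\gamma$). That part is fine.

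The genuine gap is your ``choice-independence'' step, which you later invoke to force $\mu'=\mu$. Your argument reads: define $\mu$ as the class of the first composite, note that fibres over elements of $\mu$ are equinumerous and nonempty, ``and then $f'\circ g'$ must also lie in $\mu$ since it is the unique preimage.'' But the concatenation axiom only compares fibres over morphisms \emph{already known to lie in} $\mu$; it says nothing about the second composite, which a priori sits in some other class $\mu'$. Knowing $p^{\mu}_{\tau\sigma}=1$ and $p^{\mu'}_{\tau\sigma}=1$ does not, by itself, force $\mu=\mu'$. Your counting only shows that each $h$ with a $(\tau,\sigma)$-factorization is determined by its source object; it does not show that two such $h$'s with different sources lie in the same partition class.

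This is exactly where the groupoid hypothesis (condition (ii) of Definition~\ref{defn:semi-thin}) does real work, and the paper uses it. Given $t_1\circ s_1=m_1\in\mu$, invert to get $s_1=t_1^{-1}\circ m_1$, whence $p^{\sigma}_{\tau^*\mu}\ge 1$. Now apply the concatenation axiom to the class $\sigma$: for the element $s_2\in\sigma$ coming from the $\nu$-factorization $t_2\circ s_2=n_2$, there must exist $t_3\in\tau$ and $m_3\in\mu$ with $s_2=t_3^{-1}\circ m_3$. The semi-thin condition then forces $t_3=t_2$ (same source), so $n_2=t_2\circ s_2=m_3\in\mu$, and $\mu\cap\nu\neq\emptyset$ gives $\mu=\nu$. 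Your proposal never uses inverses in a nontrivial way (the remark that ``composition is unrestricted among composable arrows'' holds in any category), so this transfer step is missing.
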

\begin{proof}
We show that there exists $\mu\in{_{\alpha}S_{\gamma}}$ such that $p^{\mu}_{\tau\sigma}\geq 1$. 
Let $x$ be an element in $X_{\alpha}$. In view of  Lemma \ref{Lemma2}, we see that $\sharp\left(\sigma x\right) =1$. 
Let $f$ be the unique element of $\sigma x$; that is, $\sigma x=\{ f\}$ and $t(f) \in \beta$. 
Lemma \ref{Lemma2} implies that $\tau t(f) =\{ g\}$ for some $g \in \tau$. Then there is an exactly one element 
$\mu\in{_{\alpha}S_{\gamma}}$ such that $g\circ f\in\mu$. Thus we have $p^{\mu}_{\tau\sigma}\geq 1$.

We prove that $p^{\mu}_{\tau\sigma}\leq 1$.
Let $s_1,\ s_2\in\sigma$ and $t_1,\ t_2\in\tau$ satisfying $t_1\circ s_1=t_2\circ s_2=m\in\mu$. Since $\sharp(\sigma s(m)) =1$, it follows that $s_1=s_2$. On the other hand, we see that $\{ t_1\} =\tau t(s_1) =\tau t(s_2) =\{ t_2\}$ 
since $\sharp (\tau t(s_1)) =\sharp (\tau t(s_2)) =1$. This yields that $t_1=t_2$.

We show that $\mu =\nu$ if $p^{\mu}_{\tau\sigma}=p^{\nu}_{\tau\sigma}=1$.
Let $t_1\circ s_1=m_1\in\mu$ and $t_2\circ s_2=n_2\in\nu$ where $s_1,\ s_2\in\sigma$ and $t_1,\ t_2\in\tau$. Since  $s_1=t^{-1}_1\circ m_1$, 
it follows that $p^{\sigma}_{\tau ^*\mu}\geq 1$. By the definition of the schemoid, we see that there exist $t_3\in\tau$ and $m_3\in\mu$ such that $s_2=t^{-1}_3\circ m_3$. We have $s(t_2)=t(s_2)=t(t^{-1}_3)=s(t_3)$. 
Lemma \ref{Lemma2} yields that $t_2=t_3$. This enables us to conclude that $n_2=t_2\circ s_2=t_2\circ\left( t^{-1}_3\circ m_3\right) =t_2\circ t^{-1}_2\circ m_3=m_3$. Therefore $\mu\cap\nu\not=\phi$ and hence $\mu =\nu$.
\end{proof}

Let $(\mathcal{C}, S, T)$ be a semi-thin schemoid. We define a category $\widetilde{R}(\mathcal{C}, S, T)=\mathcal{G}$ by 
 $\mathit{ob}(\mathcal{G})=S_0$ and $\mathrm{Hom}_{\mathcal{G}}(\alpha, \beta) ={_{\alpha}S_{\beta}}$, where $\alpha , \beta\in S_0$. 
For $\sigma \in \text{Hom}_\G(\alpha, \beta)$ and $\tau \in \text{Hom}_\G(\beta, \gamma)$, the composite is defined by 
$\tau\circ \sigma = \mu(\tau, \sigma)$ using the same element $\mu$ as in Lemma \ref{Lemma3}.

\begin{lem}\label{Lemma4}
Let $\sigma\in{_{\alpha}S_{\beta}}$, $\tau\in{_{\beta}S_{\gamma}}$, $f\in\sigma$ and $g\in\tau$. If $t(f)=s(g)$, then $g\circ f\in\tau\circ\sigma$.
\end{lem}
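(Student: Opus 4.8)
The statement to prove is Lemma~\ref{Lemma4}: if $\sigma\in{_{\alpha}S_{\beta}}$, $\tau\in{_{\beta}S_{\gamma}}$, $f\in\sigma$, $g\in\tau$ with $t(f)=s(g)$, then $g\circ f\in\tau\circ\sigma$. Recall $\tau\circ\sigma=\mu(\tau,\sigma)$ is the unique element of ${_{\alpha}S_{\gamma}}$ with $p^{\mu}_{\tau\sigma}=1$ supplied by Lemma~\ref{Lemma3}. So the whole point is: the composite $g\circ f$ of \emph{these particular} representatives lands in \emph{that} block $\mu$, not merely in some block that happens to be a nonzero summand of $(\sum_{t\in\tau}t)(\sum_{s\in\sigma}s)$. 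The plan is to exploit the semi-thinness hypothesis (condition (i) of Definition~\ref{defn:semi-thin}) together with Lemma~\ref{Lemma2}, which pins down each block's behaviour on a fixed source object to a single morphism.

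\textbf{Key steps.} First I would set $x:=s(f)$; since $\sigma\in{_{\alpha}S_{\beta}}$, Lemma~\ref{Lemma2} gives $x\in X_{\alpha}$ (because $\sigma x=\{f\}\neq\phi$) and moreover $\sigma x=\{f\}$ — the \emph{only} morphism in $\sigma$ with source $x$ is $f$. Next, since $t(f)=s(g)$ and $g\in\tau\in{_{\beta}S_{\gamma}}$, again Lemma~\ref{Lemma2} says $t(f)\in X_{\beta}$ and $\tau\, t(f)=\{g\}$, so $g$ is the only element of $\tau$ with source $t(f)$. Now let $\mu'$ be the block of $S$ containing $g\circ f$; I want to show $\mu'=\mu$. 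I would do this by computing the structure constant $p^{\mu'}_{\tau\sigma}$ via the element $g\circ f\in\mu'$: the fibre $(\pi^{\mu'}_{\tau\sigma})^{-1}(g\circ f)$ consists of pairs $(t,s)$ with $s\in\sigma$, $t\in\tau$, $t(s)=s(t)$ and $t\circ s=g\circ f$. Any such $s$ has source $s(g\circ f)=s(f)=x$, hence $s=f$ by $\sigma x=\{f\}$; then $t$ has source $t(f)$, hence $t=g$ by $\tau\,t(f)=\{g\}$. Thus the fibre is the single point $(g,f)$, so $p^{\mu'}_{\tau\sigma}=1$. By the uniqueness clause of Lemma~\ref{Lemma3} (``$p^{\mu'}_{\tau\sigma}=0$ if $\mu'\neq\mu$''), this forces $\mu'=\mu=\tau\circ\sigma$, and since $g\circ f\in\mu'$ we are done. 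One should also check $\mu'\in{_{\alpha}S_{\gamma}}$ is not separately needed — it comes for free once $\mu'=\mu$ — but it is worth noting $g\circ f$ goes from $x\in X_\alpha$ to $t(g)$ which, by the analogous source/target analysis applied to $g$, lies in $X_\gamma$, consistent with Lemma~\ref{Lemma3}.

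\textbf{Main obstacle.} There is no deep difficulty here; the lemma is essentially a bookkeeping consequence of Lemma~\ref{Lemma3} once one unwinds definitions. The one place requiring a little care is making sure the fibre computation really is being done with the concatenation map $\pi^{\mu'}_{\tau\sigma}$ in the correct order (the paper writes $p^{\mu}_{\tau\sigma}$ for the fibre of $\pi^{\mu}_{\tau\sigma}$ where composition in $\C$ is $t\circ s$), so that ``$g\circ f$'' matches ``$t\circ s$'' with $t=g$, $s=f$ — i.e.\ keeping the left/right conventions of Definition~\ref{defn:schemoid} straight. The other mild subtlety is invoking Lemma~\ref{Lemma2}: it applies to a unital schemoid satisfying condition~(i) of Definition~\ref{defn:semi-thin}, which a semi-thin schemoid is by definition, so the hypotheses are met. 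With those conventions fixed, the argument is a two-line fibre count followed by the uniqueness statement of Lemma~\ref{Lemma3}.
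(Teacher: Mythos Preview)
Your argument is correct and follows the same route as the paper: let $\mu'$ be the block containing $g\circ f$, observe that the structure constant $p^{\mu'}_{\tau\sigma}$ is nonzero, and invoke the uniqueness clause of Lemma~\ref{Lemma3} to conclude $\mu'=\tau\circ\sigma$. The paper's version is shorter only because it notes $p^{\mu'}_{\tau\sigma}\geq 1$ directly from the existence of the pair $(g,f)$ in the fibre; your detailed computation via Lemma~\ref{Lemma2} that the fibre is a singleton (hence $p^{\mu'}_{\tau\sigma}=1$) is correct but more than is needed, since Lemma~\ref{Lemma3} already forces any $\mu'$ with $p^{\mu'}_{\tau\sigma}\neq 0$ to equal $\mu$.
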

\begin{proof}
If $g\circ f\in\mu$, then $p^{\mu}_{\tau\sigma}\geq 1$. Lemma \ref{Lemma3} implies that $\mu= \tau\circ \sigma$. 
\end{proof}

\begin{prop}
$\widetilde{R}(\mathcal{C}, S, T)$ is a category.
\end{prop}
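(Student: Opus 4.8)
The statement claims that $\widetilde{R}(\mathcal{C}, S, T) = \mathcal{G}$, with $ob(\mathcal{G}) = S_0$, $\mathrm{Hom}_{\mathcal{G}}(\alpha,\beta) = {_{\alpha}S_{\beta}}$, and composition $\tau\circ\sigma = \mu(\tau,\sigma)$ from Lemma \ref{Lemma3}, is genuinely a category. So the plan is to verify the three axioms: well-definedness of composition, existence of identities, and associativity. Well-definedness is already taken care of: Lemma \ref{Lemma3} gives, for $\sigma\in{_{\alpha}S_{\beta}}$ and $\tau\in{_{\beta}S_{\gamma}}$, a \emph{unique} $\mu(\tau,\sigma)\in{_{\alpha}S_{\gamma}}$ with $p^{\mu}_{\tau\sigma}=1$, so $\tau\circ\sigma$ lands in $\mathrm{Hom}_{\mathcal{G}}(\alpha,\gamma)$ as required.

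\emph{Identities.} For $\alpha\in S_0 = ob(\mathcal{G})$ I would check that $\alpha$ itself is the identity morphism at $\alpha$, i.e.\ that $\alpha\in{_{\alpha}S_{\alpha}}$ and that $\sigma\circ\alpha = \sigma = \alpha'\circ\sigma$ for $\sigma\in{_{\alpha}S_{\beta}}$ (here $\alpha'$ is the identity at $\beta$). That $\alpha\in{_{\alpha}S_{\alpha}}$ amounts to $p^{\alpha}_{\alpha\alpha}=1$, which follows since for $1_x\in\alpha$ the only factorization $1_x = g\circ f$ with $f,g\in\alpha\subset J_0$ is $f=g=1_x$ (using unitality so that $\alpha\subset J_0$). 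For the unit laws, take $f\in\sigma$ with $s(f)=x\in X_\alpha$; then $f = f\circ 1_x$ with $1_x\in\alpha$, so by Lemma \ref{Lemma4} applied with the pair $(\alpha,\sigma)$ we get $f\in\sigma\circ\alpha$, whence $\sigma\cap(\sigma\circ\alpha)\neq\phi$ and so $\sigma\circ\alpha=\sigma$; symmetrically $\alpha'\circ\sigma=\sigma$, where $\alpha'\in S_0$ is determined by $\beta$ via $t(f)\in X_{\alpha'}$, i.e.\ $\alpha'=\beta$.

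\emph{Associativity.} For $\sigma\in{_{\alpha}S_{\beta}}$, $\tau\in{_{\beta}S_{\gamma}}$, $\rho\in{_{\gamma}S_{\delta}}$, I want $(\rho\circ\tau)\circ\sigma = \rho\circ(\tau\circ\sigma)$. The cleanest route is to exhibit a single concrete morphism lying in both sides and then invoke the uniqueness clause of Lemma \ref{Lemma3} (two elements of $S$ that meet must coincide, since $S$ is a partition). Pick $x\in X_\alpha$; by Lemma \ref{Lemma2} there is a unique $f\in\sigma x$, then a unique $g\in\tau$ with $s(g)=t(f)$, then a unique $h\in\rho$ with $s(h)=t(g)$. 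The composite $h\circ g\circ f$ is defined and, by repeated application of Lemma \ref{Lemma4}, lies in $\rho\circ(\tau\circ\sigma)$ (first $g\circ f\in\tau\circ\sigma$, then $h\circ(g\circ f)\in\rho\circ(\tau\circ\sigma)$) and equally in $(\rho\circ\tau)\circ\sigma$ (first $h\circ g\in\rho\circ\tau$, then $(h\circ g)\circ f\in(\rho\circ\tau)\circ\sigma$); associativity of composition in $\mathcal{C}$ identifies the two composites. Hence the two classes in $S$ share a morphism and are therefore equal.

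\emph{Main obstacle.} None of the steps is deep; the one point needing care is making sure that at each stage the intermediate composite (e.g.\ $g\circ f$) is actually composable and that the source/target bookkeeping with the $X_\alpha$ matches the hom-set indices $_{\alpha}S_{\beta}$ — this is exactly what Lemma \ref{Lemma2} and Lemma \ref{Lemma4} are for, so the argument is really just an orchestration of those two lemmas together with the partition property of $S$. I would also remark that the restriction to \emph{semi-thin} schemoids enters only through Lemmas \ref{Lemma3} and \ref{Lemma4}, which already assume it.
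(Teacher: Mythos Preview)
Your proposal is correct and follows essentially the same approach as the paper: both arguments pick concrete morphisms $f,g,h$ via Lemma~\ref{Lemma2}, push composites into the appropriate classes via Lemma~\ref{Lemma4}, and conclude associativity from the partition property, then handle identities via $\alpha\in{_{\alpha}S_{\alpha}}$ together with Lemma~\ref{Lemma4}. Your write-up is a bit more explicit than the paper's (e.g.\ spelling out why $p^{\alpha}_{\alpha\alpha}=1$), but the strategy is identical.
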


\begin{proof}
Let $x\in X_{\alpha}$, $\sigma\in{_{\alpha}S_{\beta}}$, $\tau\in{_{\beta}S_{\gamma}}$ and $\gamma\in{_{\alpha}S_{\gamma}}$. By Lemma \ref{Lemma2}, we see that $\sharp (\sigma x)=1$ and hence 
$\sigma x=\{ f\}$ with $f \in mor(\C)$. 
Moreover,  we have 
$\tau\bigl( t(f)\bigr) =\{ g\}$ with an appropriate  morphism $g$ in $\C$.  
Lemma \ref{Lemma4} implies that $h\circ (g\circ f)\in\mu\circ (\tau\circ\sigma )$ and $(h\circ g)\circ f\in (\mu\circ\tau )\circ\sigma$. Since $\bigl(\mu\circ (\tau\circ\sigma )\bigr)\cap\bigl( (\mu\circ\tau )\circ\sigma\bigr)\not=\phi$, it follows that $\mu\circ (\tau\circ\sigma )=(\mu\circ\tau )\circ\sigma$.

For $\alpha \in S_0$, we see that $\alpha \in {_\alpha S_\alpha}=\text{Hom}_{\G}(\alpha, \alpha)$. 
For $\sigma \in \text{Hom}_{\G}(\alpha, \beta)$, it follows from Lemma \ref{Lemma4} that 
$\beta \circ \sigma = \sigma = \sigma \circ \alpha$. This completes the proof.  
\end{proof}

\begin{prop}\label{TCST}
The category $\widetilde{R}(\mathcal{C}, S, T)$ is a groupoid.
\end{prop}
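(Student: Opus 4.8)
The plan is to show that every morphism $\sigma \in \mathrm{Hom}_{\G}(\alpha, \beta) = {_{\alpha}S_{\beta}}$ in $\G = \widetilde{R}(\mathcal{C}, S, T)$ is invertible, by exhibiting $\sigma^* \in {_{\beta}S_{\alpha}}$ as a two-sided inverse. First I would observe that for $\sigma \in {_{\alpha}S_{\beta}}$ the associated set $\sigma^* = \{T(f) \mid f \in \sigma\} = \{f^{-1} \mid f \in \sigma\}$ lies in $S$ by the schemoid axiom, and using Lemma \ref{Lemma2} together with the condition (i) of Definition \ref{defn:semi-thin} one checks that $\sigma^*$ satisfies $\sharp(\sigma^* y) \leq 1$ with $\sigma^* y = \{f^{-1}\}$ precisely when $y \in X_\beta$; hence $\sigma^* \in {_{\beta}S_{\alpha}} = \mathrm{Hom}_{\G}(\beta, \alpha)$, since the source objects of morphisms in $\sigma^*$ are exactly the target objects of those in $\sigma$.

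Next I would compute the composites $\sigma^* \circ \sigma$ and $\sigma \circ \sigma^*$ in $\G$. By the definition of composition in $\G$ (Lemma \ref{Lemma3}), $\sigma^* \circ \sigma = \mu(\sigma^*, \sigma)$ is the unique element $\mu \in {_{\alpha}S_{\alpha}}$ with $p^{\mu}_{\sigma^*\sigma} = 1$. To identify it, pick $x \in X_\alpha$; by Lemma \ref{Lemma2} there is a unique $f \in \sigma$ with $s(f) = x$, and then $f^{-1} \in \sigma^*$ with $t(f^{-1}) = s(f)$, so by Lemma \ref{Lemma4} the composite $f^{-1} \circ f = 1_x$ lies in $\sigma^* \circ \sigma$. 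Therefore $\sigma^* \circ \sigma$ contains an identity morphism $1_x$, so $\sigma^* \circ \sigma \in S_0$; but the only element of $S_0$ inside ${_{\alpha}S_{\alpha}}$ through which this can factor is $\alpha$ itself (since $1_x \in \alpha$ and $(\C, S)$ is unital, forcing $\sigma^* \circ \sigma \subset J_0$ and then $\sigma^* \circ \sigma = \alpha$ by $ob(\C) = \coprod_\gamma X_\gamma$). Symmetrically, starting from $y \in X_\beta$ and the unique $g \in \sigma^*$ with $s(g) = y$, one gets $g^{-1} \circ g = 1_y \in \sigma \circ \sigma^*$, whence $\sigma \circ \sigma^* = \beta$. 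Since $\alpha$ and $\beta$ are the identity morphisms at the objects $\alpha$ and $\beta$ of $\G$ (established in the previous proposition), this shows $\sigma^*$ is a two-sided inverse of $\sigma$, so $\G$ is a groupoid.

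The main obstacle I expect is the bookkeeping needed to verify carefully that $\sigma^* \in {_{\beta}S_{\alpha}}$, i.e.\ that $p^{\sigma^*}_{\sigma^*\beta} = p^{\sigma^*}_{\alpha\sigma^*} = 1$; this requires relating the structure constants of $\sigma^*$ to those of $\sigma$ via the contravariant functor $T$, using that $T$ sends $\calH_l$-type relations appropriately and that $T^2 = \mathrm{id}$. Concretely one wants $p^{\sigma^*}_{\sigma^*\beta} = p^{\sigma}_{\beta\sigma}$ (and similarly the other equality), which should follow by applying $T$ to the fibres of the concatenation map $\pi_{\beta\sigma}^{\sigma}$, noting $T$ reverses composition; once this identity is in hand the membership $\sigma^* \in {_{\beta}S_{\alpha}}$ is immediate from the definitions. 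Everything else is a direct application of Lemmas \ref{Lemma2}, \ref{Lemma3} and \ref{Lemma4} together with unitality.
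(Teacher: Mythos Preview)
Your proposal is correct and follows essentially the same route as the paper: show $\sigma^*$ is a two-sided inverse of $\sigma$ by invoking Lemma~\ref{Lemma4} to see $\sigma^*\circ\sigma=\alpha$ and $\sigma\circ\sigma^*=\beta$. The paper's proof is three lines and simply cites Lemma~\ref{Lemma4} without explicitly verifying $\sigma^*\in{_{\beta}S_{\alpha}}$; you fill in that step, but you make it harder than necessary. Rather than relating the structure constants $p^{\sigma^*}_{\sigma^*\beta}$ and $p^{\sigma}_{\beta\sigma}$ via $T$, note that the proof of Lemma~\ref{Lemma1} already shows $\sigma\in{_{\alpha}S_{\beta}}$ is equivalent to $s(f)\in X_\alpha$ and $t(f)\in X_\beta$ for all $f\in\sigma$; since $T(f)=f^{-1}$ swaps source and target, $\sigma^*\in{_{\beta}S_{\alpha}}$ is immediate.
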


\begin{proof}
Suppose that $\sigma$ is in $\text{Hom}_{\G}(\alpha, \beta)$. Lemma \ref{Lemma4} yields that 
$\sigma ^*\circ\sigma =\alpha$ and $\sigma\circ\sigma ^*=\beta$. 
We have $\sigma ^{-1}=\sigma ^*$.
\end{proof}

Let $st{\bf ASmd}$ denote a full subcategory of ${\bf ASmd}$ whose objects are semi-thin association schemoids. We here construct 
a functor $\widetilde{R}( \ )$ from $st{\bf ASmd}$ to the category $\mathbf{Gpd }$ of groupoids.

Let $(\mathcal{C}, S, T)$ be a semi-thin association schemoid. It follows from Proposition \ref{TCST} that 
 $\widetilde{R}(\mathcal{C}, S, T)=\mathcal{G}$ is a groupoid. Let $F$ be a morphism between semi-thin association schemoids $(\mathcal{C}, S, T)$ 
 and $(\mathcal{C}', S', T')$. By definition, for any $\sigma\in S=\mathit{mor}(\mathcal{G})$, there exists a unique element 
 $\tau\in S'=\mathit{mor}
 (\mathcal{G'})$ such that $F(\sigma )\subset\tau$. Since $\alpha\in\coprod_{x\in\mathit{ob}(\mathcal{C})}\{ 1_x\}$ 
 for any $\alpha\in S_0=\mathit{ob}
 (\mathcal{G})$, there exists a unique element $\beta\in S'_0=\mathit{ob}(\mathcal{G}')$ 
 such that $F(\alpha )\subset\beta$. 
We then define a functor  $\widetilde{R} : st{\bf ASmd} \to \mathbf{Gpd}$ by $\widetilde{R}(F)(\alpha )=\beta$ and $\widetilde{R}(F)(\sigma )=\tau$.

\begin{defn}\label{defn:thin_schemoid}
A semi-thin association schemoid $(\mathcal{C}, S, T)$ is a {\it thin} association schemoid with a subset $V$ of base points of 
$ob (\C)$ if 

(iii) $\sharp\mathrm{Hom}_{\mathcal{C}}\left( x,\ y\right)\leq 1$ for $x,\ y\in ob(\C)$ and 

(iv) the subset $V \subset ob(\C)$ satisfies the condition that 
for any connected component $C$ of $ob (\C)$, $\sharp (C\cap V) =1$ and 
 the map $\varphi :V\rightarrow S_0$ defined by $\varphi (v)\ni 1_v$ is bijective.
\end{defn}

Let  $t{\bf ASmd}$ be the full subcategory  
of $ st{\bf ASmd}$ whose objects are thin association schemoids. 
We have a commutative diagram of categories and functors 
$$
\xymatrix@C35pt@R15pt{
&& {\bf ASmd}\\
&& st{\bf ASmd} \ar@<2pt>[dll]^(0.3){\widetilde{R}(\ )} \ar@{^{(}->}[u]\\
\mathbf{Gpd} \ar@<2pt>[rr]^(0.6){\widetilde{S}(\ )}  \ar@<2pt>[urr] \ar[uurr]^{\widetilde{S}(\ )} && t{\bf ASmd}. \ar@<2pt>[ll]^{\widetilde{R}(\ )} \ar@{^{(}->}[u]
}
$$

\begin{rem}
In \cite{H-Y}, Hanaki and Yoshikawa give a procedure to make a groupoid with a thin coherent configuration as an ingredient.   
The construction factors through $t{\bf ASmd}$ the category of thin association schemoids; 
see Remark \ref{rem:categories} (i).  
\end{rem}


Let $(\C, S, T)$ be a thin association schemoid with base points. We here define 
functors $\Phi :(\C, S, T)\rightarrow \widetilde{S}\widetilde{R}(\C, S, T)$ and  
$\Psi : \widetilde{S}\widetilde{R}(\C, S, T)\rightarrow(\C, S, T)$. Moreover we shall prove 

\begin{prop}\label{prop:isomorphism} Let $(\C, S, T)$ be a thin association schemoid with a set $V$ of base points.
Then the functor $\Phi :(\C, S, T)\rightarrow \widetilde{S}\widetilde{R}(\C, S, T)$ is an isomorphism with 
the inverse $\Psi$. Moreover,  $\Phi$ preserves the sets of base points. 
\end{prop}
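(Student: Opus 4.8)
The plan is to describe the two functors explicitly and then check that they are mutually inverse on objects and on morphisms. Recall that $\widetilde{R}(\C,S,T)=\G$ has $ob(\G)=S_0$ and $\mathrm{Hom}_\G(\alpha,\beta)={_\alpha S_\beta}$, and that $\widetilde{S}\widetilde{R}(\C,S,T)=\widetilde{\G}$ has $ob(\widetilde{\G})=mor(\G)=S$ and a single morphism $(\tau,\sigma)$ between any two objects $\sigma,\tau$ with the same target in $\G$. So an object of $\widetilde{S}\widetilde{R}(\C,S,T)$ is an element $\sigma\in S$, and a morphism is a pair of such elements. To define $\Phi$ I would send an object $x\in ob(\C)$ to the unique $\sigma\in S$ with $x\in X_\alpha$-type data, more precisely to the element $\sigma\in S$ with $1_x\in\sigma$ will not do since that only lands in $S_0$; instead, for a general object I want $\Phi(x)$ to be the partition class through a canonically chosen morphism out of $x$. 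The cleanest choice: since $(\C,S,T)$ is thin, for $x,y\in ob(\C)$ there is at most one morphism $x\to y$; fix the base point $v$ in the connected component of $x$, and let $\Phi(x)$ be the class $\sigma\in S$ containing the unique morphism $v\to x$ (which exists because the component of $x$ is connected and thin). On morphisms $f:x\to y$ of $\C$, send $f$ to the pair $(\Phi(y),\Phi(x))$, which by Lemma~\ref{Lemma4} is a legitimate morphism in $\widetilde{\G}$ because the classes of $v\to x$ and $v\to y$ both have target-class $\varphi(v)$, hence lie in the same hom-set $\mathrm{Hom}_{\widetilde{\G}}$.

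Conversely, to define $\Psi:\widetilde{S}\widetilde{R}(\C,S,T)\to(\C,S,T)$, send an object $\sigma\in S=ob(\widetilde{\G})$, say $\sigma\in{_\alpha S_\beta}$ with $\alpha=\varphi(v)$, to the unique morphism-target: by Lemma~\ref{Lemma2}, $\sharp(\sigma v)=1$, so $\sigma v=\{f_\sigma\}$ for a unique $f_\sigma$ with $s(f_\sigma)=v$; set $\Psi(\sigma)=t(f_\sigma)\in ob(\C)$. On a morphism $(\tau,\sigma)$ of $\widetilde{\G}$ with $\sigma,\tau$ having the same source base-point $v$, send it to the unique morphism $t(f_\sigma)\to t(f_\tau)$ in $\C$, which exists and is unique by thinness condition (iii) once one checks $t(f_\sigma)$ and $t(f_\tau)$ lie in the same connected component (they do, both being reachable from $v$). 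Functoriality of $\Psi$ reduces to: the composite in $\widetilde{\G}$ of $(\mu,\tau)$ and $(\tau,\sigma)$ is $(\mu,\sigma)$, and the unique morphisms $t(f_\sigma)\to t(f_\tau)$, $t(f_\tau)\to t(f_\mu)$ compose to the unique morphism $t(f_\sigma)\to t(f_\mu)$ — immediate from uniqueness in a thin category.

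Then I would verify $\Psi\Phi=id$ and $\Phi\Psi=id$. For $\Psi\Phi(x)$: $\Phi(x)$ is the class of the unique $v\to x$, so its representative out of $v$ is exactly that morphism, whose target is $x$; hence $\Psi\Phi(x)=x$. On morphisms this is the same kind of bookkeeping using uniqueness. For $\Phi\Psi(\sigma)$: $\Psi(\sigma)=t(f_\sigma)$, and $\Phi(t(f_\sigma))$ is the class of the unique morphism $v\to t(f_\sigma)$; but $f_\sigma$ is such a morphism, so $\Phi(t(f_\sigma))=$ class of $f_\sigma$, and $f_\sigma\in\sigma$, so by the partition property this class is $\sigma$. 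Finally, $\Phi$ preserves base points: $\Phi(v)$ for $v\in V$ is the class of the morphism $v\to v$, which (in the component of $v$, thin) must be $1_v$, hence $\Phi(v)=\varphi(v)\in S_0$, and $S_0$ is exactly the set of base points of $\widetilde{S}\widetilde{R}(\C,S,T)=\widetilde{S}(\G)$, namely $\{1_\alpha\}_{\alpha\in ob(\G)}$ — wait, more carefully, the base points of $\widetilde{S}(\G)$ are $\{1_\alpha\}_{\alpha\in ob(\G)}\subset ob(\widetilde{\G})=S$, and $1_\alpha$ as an element of $S=\mathrm{Hom}_\G$ is just $\alpha\in{_\alpha S_\alpha}$; so $\Phi(v)=\varphi(v)=\alpha$ is indeed a base point. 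The main obstacle I anticipate is not any single hard step but the care needed in matching up the three layers of "elements of $S$" — once as a partition class of morphisms of $\C$, once as an object of $\G$, once as an object of $\widetilde{\G}$ — and in consistently using the base-point bijection $\varphi:V\to S_0$ together with the connectedness/thinness hypotheses (iii), (iv) to guarantee the relevant morphisms of $\C$ exist and are unique. The structural input (Lemmas~\ref{Lemma2}, \ref{Lemma3}, \ref{Lemma4} and Proposition~\ref{TCST}) does all the real work; the rest is disciplined diagram-chasing.
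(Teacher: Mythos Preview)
Your overall strategy matches the paper's: pick, for each object $x$, a canonical morphism between $x$ and its base point $v$, send $x$ to the partition class of that morphism, and invert by reading off the ``other end'' of the unique representative starting at $v$. But there is a concrete orientation error that breaks your definition of $\Phi$ on morphisms.

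You set $\Phi(x)$ to be the class of the unique morphism $v\to x$. Then $\Phi(x)\in{_{\varphi(v)}S_{?}}$, i.e.\ in $\G=\widetilde{R}(\C,S,T)$ the element $\Phi(x)$ has \emph{source} $\varphi(v)$, not target. Yet the construction of $\widetilde{S}(\G)$ in Example~\ref{ex:ex1}(iii) places a morphism $(\tau,\sigma)$ only when $t_{\G}(\tau)=t_{\G}(\sigma)$. For $x,y$ in the same component, the classes of $v\to x$ and $v\to y$ share a source in $\G$ but generally have different targets (the classes containing $1_x$ and $1_y$). So $(\Phi(y),\Phi(x))$ need not be a morphism of $\widetilde{\G}$ at all; your sentence ``both have target-class $\varphi(v)$'' is simply false with your choice. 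The paper avoids this by taking $\rho_x\colon x\to v$ (so $\sigma_x$ has \emph{target} $\varphi(v)$ in $\G$), and correspondingly defines $\Psi$ via the unique element of $\varphi^{-1}(\beta)\sigma$ (target side), which is why it needs the small extra Claim~\ref{claim:1} rather than Lemma~\ref{Lemma2} directly. If you reverse your arrow to $x\to v$ throughout, your $\Phi$ and $\Psi$ become exactly the paper's.

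A second, smaller gap: you never check that $\Phi$ and $\Psi$ are morphisms of schemoids, i.e.\ that they respect the partitions. The paper does this explicitly using Lemma~\ref{Lemma4}: for $f\colon x\to y$ in $\sigma$, one has $f=\rho_y^{-1}\rho_x$, whence $\Phi(f)=(\sigma_y,\sigma_x)\in S_{\sigma_y^{*}\circ\sigma_x}=S_\sigma$. Without this, you only have an isomorphism of categories, not of schemoids. Your closing remark that ``the rest is disciplined diagram-chasing'' is fair in spirit, but this particular check is where Lemma~\ref{Lemma4} is actually used and deserves a line.
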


Thus we have the main result in this section. 

\begin{thm}\label{thm:eq_categories} {\em (}cf. \cite[Proposition 5.2]{H}{\em )} The functor $\widetilde{S}( \ )$ gives rise to an equivalence between 
the category $\mathbf{Gpd}$ of groupoids and 
the category $(t{\bf ASmd})_0$ of based thin association schemoids.  
Moreover, the functor $\widetilde{R}( \ ) : (t{\bf ASmd})_0 \to \mathbf{Gpd}$ is the right adjoint for  
$ \widetilde{S}( \ ) : \mathbf{Gpd} \to (t{\bf ASmd})_0$. 
\end{thm}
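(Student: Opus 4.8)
The strategy is to assemble the equivalence from the local building blocks already in place: on one side the functor $\widetilde{S}(\ ):\mathbf{Gpd}\to (t{\bf ASmd})_0$, which Lemma \ref{lem:images} shows lands in thin schemoids and which we equip with base points $\widetilde{\mathcal G}^\circ=\{1_x\}_{x\in ob(\mathcal G)}$; on the other side the functor $\widetilde{R}(\ ):st{\bf ASmd}\to\mathbf{Gpd}$ constructed above, restricted to $t{\bf ASmd}$. First I would verify that $\widetilde{R}(\ )$ carries a \emph{based} thin schemoid to a groupoid in the evident way and that the base-point data $V$ of a thin schemoid is exactly what is needed to make the two composites natural; the condition (iv) in Definition \ref{defn:thin_schemoid}, i.e.\ that $\varphi:V\to S_0$ is a bijection, is precisely the hypothesis that pins down $\widetilde{R}\widetilde{S}$ and $\widetilde{S}\widetilde{R}$ as inverse up to coherent natural isomorphism rather than mere equivalence on objects.

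The first main step is the natural isomorphism $\widetilde{R}\widetilde{S}\cong \mathrm{id}_{\mathbf{Gpd}}$. Given a groupoid $\mathcal K$, unwinding the definitions gives $ob(\widetilde{R}\widetilde{S}(\mathcal K))=S_0=\{\mathcal K_{1_x}\}_{x\in ob(\mathcal K)}$ and $\mathrm{Hom}(\mathcal K_{1_x},\mathcal K_{1_y})={}_{\mathcal K_{1_x}}S_{\mathcal K_{1_y}}$, which by the computations in Example \ref{ex:ex1}(iii) and Lemma \ref{Lemma2} is in bijection with $\mathrm{Hom}_{\mathcal K}(x,y)$ via $\mathcal K_f\mapsto f$; one checks this respects composition using Lemma \ref{Lemma3}/\ref{Lemma4}, so the assignment $x\mapsto \mathcal K_{1_x}$, $f\mapsto\mathcal K_f$ is an isomorphism of groupoids, natural in $\mathcal K$. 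The second main step is Proposition \ref{prop:isomorphism}, which furnishes $\Phi:(\C,S,T)\xrightarrow{\ \cong\ }\widetilde{S}\widetilde{R}(\C,S,T)$ with inverse $\Psi$, preserving base points; I would invoke this and then check naturality of $\Phi$ in $(\C,S,T)$ — i.e.\ that a morphism $F$ of based thin schemoids satisfies $\widetilde{S}\widetilde{R}(F)\circ\Phi_{\C}=\Phi_{\E}\circ F$ — which is a direct diagram chase from the formulas defining $\widetilde{R}(F)$, $\widetilde{S}$, and $\Phi$. Together these two natural isomorphisms give the equivalence $(t{\bf ASmd})_0\simeq\mathbf{Gpd}$.

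Finally, to upgrade the equivalence to an adjunction $\widetilde{S}(\ )\dashv\widetilde{R}(\ )$ with $\widetilde{R}$ on the right, I would exhibit unit and counit. The unit $\eta_{\mathcal K}:\mathcal K\to\widetilde{R}\widetilde{S}(\mathcal K)$ is the isomorphism of the first step; the counit $\varepsilon_{(\C,S,T)}:\widetilde{S}\widetilde{R}(\C,S,T)\to(\C,S,T)$ is the isomorphism $\Psi$ of Proposition \ref{prop:isomorphism}. Since both are natural isomorphisms, the triangle identities $\varepsilon_{\widetilde{S}\mathcal K}\circ\widetilde{S}(\eta_{\mathcal K})=\mathrm{id}_{\widetilde{S}\mathcal K}$ and $\widetilde{R}(\varepsilon_{\C})\circ\eta_{\widetilde{R}\C}=\mathrm{id}_{\widetilde{R}\C}$ reduce to checking that these particular isomorphisms match up, which follows from how $\Phi,\Psi$ were defined on generators together with the description of $\eta$; alternatively one cites the general fact that an equivalence of categories is automatically part of an adjoint equivalence, and then notes that the specified $\widetilde{S}\dashv\widetilde{R}$ with the chosen unit/counit realizes it. The main obstacle I expect is \emph{bookkeeping with base points}: one must confirm that $\widetilde{S}(\ )$ actually takes values in $(t{\bf ASmd})_0$ (not just $st{\bf ASmd}$), that $\widetilde{R}(\ )$ is well defined on the based subcategory, and that all of $\Phi$, $\eta$, and the relevant morphism maps genuinely preserve base points — i.e.\ that condition (iv) is used consistently — rather than any deep structural difficulty.
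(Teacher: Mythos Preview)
Your approach is correct but differs from the paper's. The paper's proof is a two-line appeal to Corollary~\ref{cor:fully_faithful} and Proposition~\ref{prop:isomorphism}: since $\widetilde{S}(\ )$ is already known to be fully faithful into $(q{\bf ASmd})_0$ (hence into $(t{\bf ASmd})_0$, via Remark~\ref{rem:quasi-schemoids}), and Proposition~\ref{prop:isomorphism} shows every based thin schemoid is isomorphic to one in the image, the standard ``fully faithful + essentially surjective'' criterion gives the equivalence immediately, with the adjointness then following from the general fact about equivalences. You instead build the equivalence symmetrically, by exhibiting both natural isomorphisms $\widetilde{R}\widetilde{S}\cong\mathrm{id}_{\mathbf{Gpd}}$ and $\widetilde{S}\widetilde{R}\cong\mathrm{id}_{(t{\bf ASmd})_0}$ directly and checking naturality by hand. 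Your route is more explicit and self-contained, and it makes the unit and counit of the adjunction visible; the paper's route is more economical because the full-faithfulness was already established in Section~3 for other purposes, so only Proposition~\ref{prop:isomorphism} needs to be invoked here, and all naturality bookkeeping is absorbed into the standard categorical lemma.
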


\begin{proof}
The results follow from Corollary \ref{cor:fully_faithful} and Proposition \ref{prop:isomorphism}.
\end{proof}

In order to define the functor $\Phi$ mentioned above, we recall the condition (iii) in Definition \ref{defn:thin_schemoid}. Then for any 
object $x \in ob(\C)$, we see that there are an exactly one element $v \in V$ and  
a unique morphism $\rho_x$ in $\C$ such that 
$\text{Hom}_\C(x, v) = \{\rho_x\}$. 
Moreover, we choose the partition $\sigma_x \in S$ so that $\rho_x$ is in $\sigma$. Then define a functor 
$\Phi : (\C, S, T)\rightarrow \widetilde{S}\widetilde{R}(\C, S, T)=(S, \{S_g\}_{g \in S}, T')$ 
by 
$\Psi(x) = \sigma_x$ for $x \in ob (\C)$ and 
$$
\Phi\begin{pmatrix}\xymatrix@C20pt@R15pt{
x\ar[rr]^f\ar[rd]_{\rho _x}&&y\ar[ld]^{\rho _y}\\&v&}\\ 
\end{pmatrix} =\left(\sigma _y, \sigma _x\right)
$$
for $f \in mor(\C)$. In order to define a functor from $\widetilde{S}\widetilde{R}(\C, S, T)$ to $(\C, S, T)$, we need the following fact. 

\begin{claim}\label{claim:1}
$\sharp \varphi^{-1}(\beta) \sigma = 1$. 
\end{claim}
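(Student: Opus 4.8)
The plan is to unwind the definitions. We are given a thin association schemoid $(\C, S, T)$ with base-point set $V$, a fixed $\beta \in S_0$, and we want to show that the set $\varphi^{-1}(\beta)\sigma$ — the morphisms of $\sigma$ with source equal to the unique base point $v$ attached to $\beta$ — has exactly one element. Here $\varphi : V \to S_0$ is the bijection of Definition \ref{defn:thin_schemoid}(iv), so $\varphi^{-1}(\beta)$ is a single vertex $v \in V$ with $1_v \in \beta$. (I am reading the notation $\varphi^{-1}(\beta)\sigma$ in light of the earlier convention $\sigma x = \{f \in \sigma \mid s(f) = x\}$, so the claim is $\sharp(\sigma v) = 1$ where $v = \varphi^{-1}(\beta)$. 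The statement presumably comes with an implicit hypothesis $\sigma \in {_{\beta}S_{\gamma}}$ for some $\gamma$, i.e.\ $\sigma$ has ``left base'' $\beta$; otherwise it is false, and Lemma \ref{Lemma2} is exactly tailored to this.)

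First I would invoke the decomposition $S = \coprod_{\alpha, \beta' \in S_0} {_{\alpha}S_{\beta'}}$ following Lemma \ref{Lemma1}, to write $\sigma \in {_{\alpha}S_{\beta}}$ for uniquely determined $\alpha, \beta \in S_0$; in the context where this claim is used (defining $\Psi$ on a morphism out of the object $\beta$ of $\widetilde{S}\widetilde{R}(\C,S,T)$), the relevant $\sigma$ lies in $\text{Hom}_{\widetilde{R}(\C,S,T)}(\beta, \gamma) = {_{\beta}S_{\gamma}}$, so $\alpha = \beta$. Then I would apply Lemma \ref{Lemma2} directly: since $\sigma \in {_{\beta}S_{\gamma}}$, we have $\sharp(\sigma x) = 1$ for $x \in X_{\beta}$ and $\sharp(\sigma x) = 0$ otherwise. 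Finally, $v = \varphi^{-1}(\beta)$ satisfies $1_v \in \beta$, hence $v \in X_{\beta}$ by the definition $X_{\beta} = \{x \in ob(\C) \mid 1_x \in \beta\}$; therefore $\sharp(\sigma v) = 1$, which is the claim.

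The only real subtlety — and the thing to state carefully rather than any hard computation — is matching up the ambient hypotheses: confirming that the $\sigma$ appearing in the claim is indeed a morphism of the groupoid $\widetilde{R}(\C,S,T)$ with source object $\beta$, so that $\sigma \in {_{\beta}S_{\gamma}}$ and Lemma \ref{Lemma2} applies with $\alpha = \beta$. Once that is pinned down, the proof is a one-line application of Lemma \ref{Lemma2} together with the definitions of $X_{\beta}$ and of the bijection $\varphi$. I do not anticipate any genuine obstacle; the content of the claim is entirely carried by Lemma \ref{Lemma2}, and this step exists mainly to package that lemma in the notation needed to define $\Psi$ on morphisms.
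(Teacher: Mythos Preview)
You have misread the notation in two places, and the errors do not cancel. First, the paper's convention is $\sigma x=\{f\in\sigma\mid s(f)=x\}$ but $y\sigma=\{f\in\sigma\mid t(f)=y\}$, with the object on the \emph{left} indicating the \emph{target}. So $\varphi^{-1}(\beta)\sigma$ is the set of $f\in\sigma$ with $t(f)=\varphi^{-1}(\beta)$, not with source $\varphi^{-1}(\beta)$. Second, in the surrounding context the object $\sigma$ of $\widetilde S\widetilde R(\C,S,T)$ satisfies $t(\sigma)=\beta$ in $\widetilde R(\C,S,T)$, i.e.\ $\sigma\in{_{\alpha}S_{\beta}}$ for some $\alpha$, not $\sigma\in{_{\beta}S_{\gamma}}$. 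With the correct reading, Lemma~\ref{Lemma2} does \emph{not} apply directly: that lemma controls $\sharp(\sigma x)$ for $x\in X_{\alpha}$, whereas what is needed is $\sharp(v\sigma)$ for $v=\varphi^{-1}(\beta)\in X_{\beta}$.

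The paper bridges this gap by passing to inverses, which is exactly the step your proposal omits. If $f_\sigma,g_\sigma\in v\sigma$ then $f_\sigma^{-1},g_\sigma^{-1}$ lie in $\sigma^*=T(\sigma)$ and both have \emph{source} $v$; condition (i) of Definition~\ref{defn:semi-thin} (equivalently, Lemma~\ref{Lemma2} applied to $\sigma^*\in{_{\beta}S_{\alpha}}$) then forces $f_\sigma^{-1}=g_\sigma^{-1}$. Your Lemma~\ref{Lemma2} strategy can be salvaged, but only by inserting this passage to $\sigma^*$: the bijection $v\sigma\to\sigma^*v$, $f\mapsto f^{-1}$, together with Lemma~\ref{Lemma2} for $\sigma^*\in{_{\beta}S_{\alpha}}$, gives $\sharp(v\sigma)=\sharp(\sigma^*v)=1$. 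That corrected argument is essentially the paper's, and in fact also supplies the existence half, which the paper's written proof addresses only implicitly.
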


\begin{proof}
Suppose that $f_\sigma$ and $g_\sigma$ are in $\varphi^{-1}(\beta)\sigma$. There exists a unique partition $\tau \in S$ such that 
$T(\sigma) \subset \tau$. Then $f_\sigma^{-1}$ and $g_\sigma^{-1}$ are in $\tau$. It follows that 
$s(f_\sigma^{-1}) = t(f_\sigma)=\varphi^{-1}(\beta) = t(g_\sigma)=s(g_\sigma^{-1})$. The condition (i) 
in Definition \ref{defn:semi-thin} implies that  $f_\sigma^{-1} = g_\sigma^{-1}$.  
\end{proof}

We define a functor $\Psi : \widetilde{S}\widetilde{R}(\C, S, T)\rightarrow(\C, S, T)$ by 
$\Psi(\sigma) = s(f_\sigma)$ and 
$$\Psi (
\xymatrix@C15pt@R10pt{\sigma\ar[rr]^{\left(\tau, \sigma\right)} 
&&\tau} 
)
=\begin{matrix}\xymatrix@C10pt@R15pt{s(f_{\sigma})\ar[rr]^{f^{-1}_{\tau}f_{\sigma}}\ar[rd]_{f_{\sigma}}&&s(f_{\tau})\ar[ld]^{f_{\tau}}\\&\varphi ^{-1}
(\beta ),&}\\ \end{matrix}
$$ 
where $t(\sigma) = \beta$ and $\varphi^{-1}(\beta)= \sigma$.

\begin{proof}[Proof of Proposition \ref{prop:isomorphism}] By definition, it is readily seen that $\Phi$ and $\Psi$ are functors. 
We prove that $\Psi$ is an isomorphism of schemoids preserving the set of base points. 

For any object $x$ in $\C$, we see that $\Psi\Phi(x) = \Psi(\sigma) = s(f_\sigma)$, where $\text{Hom}_\C(x, v) = \{\rho \}$, $\rho \in \sigma$ for some $v \in 
V$, $\sigma \in {_\alpha S_\beta}$ and $\varphi^{-1}\sigma = \{f_\sigma \}$. Since $1_v \in \beta$, it follows that $\varphi^{-1}(\beta) = v$. 
Claim \ref{claim:1} yields that $\rho = f_\sigma$ and hence $s(f_\sigma) = x$. 

Let $\sigma$ be an object in $\widetilde{S}\widetilde{R}(\C, S, T)$; that is, $\sigma \in S$. Then 
$\Phi\Psi(\sigma) =  \Phi(s(f_\sigma))=\sigma$ because $f_\sigma \in \sigma$. 

Observe that $(\C, S, T)$ and $\widetilde{S}\widetilde{R}(\C, S, T)$ are thin. The condition (iii) in Definition \ref{defn:thin_schemoid} enables us to 
conclude that the functors $\Psi\Phi$ and $\Phi\Psi$ are identity on the set of morphisms since so are on objects. 

We prove that $\Phi$ and $\Psi$ preserve partitions.  For any $\sigma \in S$, let $f : x \to y$ be a morphism in $\sigma$. 
Suppose that $\Phi(f)= (\sigma_y, \sigma_x)$ and $\sigma_y^*\sigma_x = \tau$. 
It follows from Lemma \ref{Lemma4} that $f = \rho_y^{-1}\rho_x \in \tau$. Thus we see that $\tau = \sigma$ and hence 
$\Psi(\sigma) \subset S_\sigma$. 
By definition, we see that $\xymatrix@C18pt@R15pt{\Psi(\sigma\ar[r]^{\left(\tau, \sigma\right)} 
&\tau)}  =  \xymatrix@C18pt@R15pt{s(f_{\sigma})\ar[r]^{f^{-1}_{\tau}f_{\sigma}}&s(f_{\tau})}.
$
Suppose that $\tau^*\circ \sigma = \mu$. Then $f^{-1}_{\tau}f_{\sigma}\in \tau^*\circ \sigma = \mu$. Thus we have 
$\Psi(S_\mu) \subset \mu$. 

In order to prove that $\Phi$ preserves the set of base points, we take an element $v$ in $V$. Then it follows that 
$\text{Hom}_\C(v, v) = \{ 1_v\}$, $1_v \in \varphi(v)$ and hence $\Phi(V) \subset S_0$; see Definition \ref{defn:thin_schemoid}. The map 
$\varphi : V \to S_0$ is a bijection by definition. We have $\Phi(V) = S_0$. This completes the proof. 
\end{proof}

We conclude this section with an example of a semi-thin association shcemoid $(\mathcal{C}, S, T)$ which is not 
isomorphic to $\widetilde{S}\widetilde{R}(C, S, T)$. 

\begin{ex} Let $I$ be a set with an element $1$. 
For any $i \in I$, let $\C_i$ be a groupoid of the form 
$$
\xymatrix{
x_i \ar@(lu,ld)_{1_{x_i}} \ar@<+2pt>[r]^{f_i} & y_i, \ar@(ru,rd)^{1_{y_i}} \ar@<+2pt>[l]^{g_i}
}
$$
and $\C_I$ the disjoint union of the categories $\C_i$ over $I$.  
Define a partition $S$ of $mor (\C_I)$ by $S=\{\sigma_0^1, \sigma_0^2, \tau_1, \tau_2\}$, where 
$\sigma_0^1=\{1_{x_i}\}_{i\in I}$,  $\sigma_0^2=\{1_{y_i}\}_{i\in I}$, $\tau_1=\{f_i\}_{i\in I}$ and $\tau_2=\{g_i\}_{i\in I}$. 
Moreover, we define a contravariant functor $T$ with $T^2 = 1_{\C_I}$ on $\C_I$ by 
$T(x_i) = y_i$ and $T(g_i) = f_i$. Then $(\C_I, S, T)$ is a schemoid. A direct computation shows that 
$\widetilde{R}(C_I, S, T)\cong \C_1$ for any $I$ but $\widetilde{S}\C_1\cong (C_I, S, T)$ if and only if 
$\sharp I =2$. In fact, $(C_I, S, T)$ is not thin if $\sharp I >2$; see (iv) in Definition \ref{defn:thin_schemoid}.
\end{ex}

\section{Extensions of schemoids}
In order to assert that the categories $q{\bf ASmd}$ and ${\bf ASmd}$ are more fruitful, it is important to construct (quasi-)schemoids systematically. 
This section contributes to it.       
We begin with the definition of a linear extension of a small category in the sense of Baues and Wirsching \cite{B-W}. 

Let $F(\C)$ be the category of factorizations in $\C$; that is, the objects are the morphisms in $\C$ and morphisms $f \to g$ are the pairs 
$(\alpha, \beta)$ for which  
diagram 
$$
\xymatrix@C35pt@R15pt{
t(f) \ar[r]^{\alpha} & t(g) \\
s(f) \ar[u]^f& s(g) \ar[u]_g \ar[l]^{\beta}
}
$$
commutes. 
The composition is defined by $(\alpha', \beta')\circ (\alpha, \beta)=(\alpha'\alpha, \beta\beta')$. 

\begin{defn}\label{defn:extensions}(\cite[(2.2) Definition]{B-W})
Let $\C$ and $\E$ be small categories.  
Let $D : F(\C) \to \K\text{-Mod}$ denote a natural system, namely a functor from $F(\C)$ to the category of $\K$-modules.  We say that 
$$
D_+ \to {\mathcal E} \stackrel{q}{\to} \C
$$
is a linear extension if (a), (b) and (c) hold: 

(a) $\E$ and $\C$ have the same objects and $q$ is a full functor which is the identity on objects. 

(b) For each morphism $f : A \to B$ in $\C$,  the abelian group $D_f$ acts transitively and effectively on the subset $q^{-1}(f)$ 
of morphisms in $\E$. We write $f_0+\alpha$ for the action of $\alpha \in D_f$ on $f_0 \in q^{-1}(f)$. 

(c) The action satisfies the {\it linear distributivity law}:
$$
(f_0+\alpha)(g_0+\beta) = f_0g_0+f_*\beta + g^*\alpha, 
$$ 
where $f_*=D(f, 1)$ and $g^*=D(1, g)$.
\end{defn}

We give a linear extension a schemoid structure under appropriate assumptions. 

\begin{prop}\label{prop:extension}
Let 
$
D_+ \to {\mathcal E} \stackrel{q}{\to} \C
$
be a linear extension of $\C$.  
Let $(\C, S)$ be a quasi-schemoid. 
Assume further that  for any morphism $f$ in $\C$, the homomorphism $f^*$ and $f_*$ are invertible and that 
$D_{1_{s(f)}}\cong D_{1_{s(g)}}$ for any $\sigma \in S$ and $f, g \in \sigma$. 
Then ${\mathcal E}$ admits a unique quasi-schemoid structure for which $q$ is a morphism of 
quasi-schemoids and injective on the partition of $mor ({\mathcal E})$.
\end{prop}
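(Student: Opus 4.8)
The plan is to transport the partition $S$ of $mor(\C)$ up to $mor(\E)$ along the set map $q$, and then to verify the concatenation axiom using the linear distributivity law together with the invertibility hypotheses. Concretely, for each $\sigma \in S$ define $\widehat{\sigma} := q^{-1}(\sigma) \subset mor(\E)$, and set $\widehat{S} := \{\widehat{\sigma}\}_{\sigma \in S}$. Since $S$ is a partition of $mor(\C)$ and $q$ is surjective on morphisms (by condition (a), $q$ is full and identity on objects), $\widehat{S}$ is a partition of $mor(\E)$, and $q$ is automatically injective on the partition (distinct $\sigma$ have disjoint nonempty preimages). Uniqueness is also forced: if $(\E, S')$ is any quasi-schemoid making $q$ a morphism of quasi-schemoids that is injective on partitions, then each $\sigma' \in S'$ maps into a single $\sigma \in S$, so $\sigma' \subseteq q^{-1}(\sigma) = \widehat{\sigma}$, and injectivity on partitions plus the fact that both are partitions forces $\sigma' = \widehat{\sigma}$. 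So the real content is checking the concatenation axiom for $(\E, \widehat{S})$.

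For the concatenation axiom, fix $\widehat{\sigma}, \widehat{\tau}, \widehat{\mu} \in \widehat{S}$ and two composable-witnessing morphisms $\tilde f, \tilde g \in \widehat{\mu}$, i.e. elements $\tilde f \in q^{-1}(f)$, $\tilde g \in q^{-1}(g)$ with $f, g \in \mu$. I must exhibit a bijection $(\pi_{\widehat{\sigma}\widehat{\tau}}^{\widehat{\mu}})^{-1}(\tilde f) \cong (\pi_{\widehat{\sigma}\widehat{\tau}}^{\widehat{\mu}})^{-1}(\tilde g)$. The strategy is: first project down. An element of $(\pi_{\widehat{\sigma}\widehat{\tau}}^{\widehat{\mu}})^{-1}(\tilde f)$ is a pair $(\tilde a, \tilde b)$ with $\tilde a \in \widehat{\sigma}$, $\tilde b \in \widehat{\tau}$, $\tilde a \circ \tilde b = \tilde f$; applying $q$ gives $(q\tilde a, q\tilde b) \in (\pi_{\sigma\tau}^\mu)^{-1}(f)$. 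Since $(\C,S)$ is already a quasi-schemoid, there is a bijection $\Theta: (\pi_{\sigma\tau}^\mu)^{-1}(f) \cong (\pi_{\sigma\tau}^\mu)^{-1}(g)$. The task is then to lift $\Theta$ through the fibers of $q$. For a fixed $(a,b) \in (\pi_{\sigma\tau}^\mu)^{-1}(f)$ with image pair $(a', b') = \Theta(a,b) \in (\pi_{\sigma\tau}^\mu)^{-1}(g)$, I need to show that the set of lifts $\{(\tilde a, \tilde b) : q\tilde a = a, q\tilde b = b, \tilde a\tilde b = \tilde f\}$ is in bijection with $\{(\tilde a', \tilde b') : q\tilde a' = a', q\tilde b' = b', \tilde a'\tilde b' = \tilde g\}$, and that both are nonempty (so that composing with $\Theta$ gives the desired fiberwise bijection).

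The key computation is with the linear distributivity law. Write $\tilde a = a_0 + \alpha$ for a fixed lift $a_0 \in q^{-1}(a)$ and $\alpha \in D_a$, and $\tilde b = b_0 + \beta$; then $\tilde a \tilde b = a_0 b_0 + a_* \beta + b^* \alpha$. The constraint $\tilde a\tilde b = \tilde f$ becomes $a_*\beta + b^*\alpha = \gamma$ for a fixed $\gamma \in D_f$ determined by $\tilde f$ and the chosen base lifts. Since by hypothesis $a_*$ and $b^*$ are invertible, this is one linear equation in the two variables $(\alpha,\beta) \in D_a \times D_b$, so its solution set is an affine subset in bijection (via $\alpha \mapsto \beta = a_*^{-1}(\gamma - b^*\alpha)$) with $D_a$; in particular it is nonempty. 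The analogous count at $(a', b')$ gives a solution set in bijection with $D_{a'}$. It remains to produce a bijection $D_a \cong D_{a'}$. Here I would use that $a$ and $a' $ lie in the same partition $\sigma \in S$ (by construction of $\Theta$ staying within $\sigma$), together with the hypothesis $D_{1_{s(a)}} \cong D_{1_{s(a')}}$ — and I would need a mild supplementary observation relating $D_h$ to $D_{1_{s(h)}}$ via the invertible maps $h_* , h^*$ (e.g. $h_* : D_{1_{s(h)}} \to D_h$ is an isomorphism, using $h \circ 1_{s(h)} = h$), so that $D_a \cong D_{1_{s(a)}} \cong D_{1_{s(a')}} \cong D_{a'}$. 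Assembling: $(\pi_{\widehat{\sigma}\widehat{\tau}}^{\widehat{\mu}})^{-1}(\tilde f)$ fibers over $(\pi_{\sigma\tau}^\mu)^{-1}(f)$ with each fiber $\cong D_a$ (depending on the base point $(a,b)$), and similarly over $g$; stitching the base bijection $\Theta$ with the fiber bijections $D_a \cong D_{a'}$ yields the required bijection, completing the verification.

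The main obstacle I anticipate is the bookkeeping in this last stitching step: the fiber over $(a,b)$ has size $D_a$, which varies with the base point, so to get a single global bijection I must choose the fiberwise identifications $D_a \cong D_{a'}$ coherently (or simply appeal to cardinality when everything is finite, if that is the intended setting — but the excerpt allows infinite categories, so one should give actual bijections). A clean way around this is to not insist on naturality: since we only need the existence of a bijection of sets, it suffices to partition both total sets by the base fibers of $\Theta$ and choose, for each base pair, any bijection between the corresponding lift-fibers — which exists because each is $\cong D_a \cong D_{a'}$. I would also double-check the compatibility of the chosen base lifts ($a_0, b_0$ etc.) so that the constant $\gamma$ is well-defined and the "nonempty" claim genuinely holds; this is where the invertibility of $a_*$ (equivalently, surjectivity, to solve for $\beta$) is essential and where one sees why the hypothesis cannot be dropped.
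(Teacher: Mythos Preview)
Your proposal is correct and follows essentially the same line as the paper: define the partition on $\E$ by $\widehat{\sigma}=q^{-1}(\sigma)$, prove uniqueness from injectivity on partitions, and verify the concatenation axiom by using the linear distributivity law together with the invertibility of $f_*,f^*$ and the hypothesis $D_{1_{s(f)}}\cong D_{1_{s(g)}}$.

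The only organizational difference is in how the ``extra'' abelian-group factor is packaged. The paper writes down an explicit pair of inverse maps
\[
\theta:\ (\pi_{\tau\mu}^{\sigma})^{-1}(f)\times (+)^{-1}\{\gamma\}\ \longleftrightarrow\ (\widetilde{\pi_{\tau\mu}^{\sigma}})^{-1}(f^{0}+\gamma)\ :\xi,
\]
so the $\E$-fiber factors as the $\C$-fiber times a set $(+)^{-1}\{\gamma\}\cong D_f$ that depends only on the \emph{target} morphism $f$; since $f,g$ lie in the same cell, $D_f\cong D_g$ and the comparison is immediate. You instead fiber $(\pi_{\widehat{\sigma}\widehat{\tau}}^{\widehat{\mu}})^{-1}(\tilde f)$ over the $\C$-fiber and identify each piece with $D_a$ for the \emph{source} factor $a\in\sigma$, which varies with the base point $(a,b)$; this is why you have to do the ``stitching'' step. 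Both parameterizations are equivalent via the isomorphism $b^{*}:D_a\to D_f$, so there is no genuine gap --- but if you want to avoid the bookkeeping issue you flag at the end, switching to the constant factor $D_f$ (as the paper does) makes the bijection global without any choices.
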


We call such a morphism $q$ in Proposition \ref{prop:extension} a {\it proper} morphism. 

\begin{rem}Let $\C$ be a quasi-schemoid. 
If $\C$ has a connected groupoid structure, for example objects in $j({\bf AS})$ and connected objects in 
$\widetilde{S}({\bf Gpd})$, then all the assumptions in Theorem \ref{prop:extension} and 
in Theorem \ref{thm:extension_schemoid} below are satisfied. 
\end{rem}

\begin{lem}\label{lem:proper_map}
Let $q : ({\mathcal E}, \widetilde{S})  \to (\C, S)$ be a proper morphism of quasi-schemoids if and only if  
$\widetilde{S}=\{ q^{-1}(\sigma)\}_{\sigma \in S}$. 
\end{lem}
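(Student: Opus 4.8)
The statement asserts that a morphism $q : (\E, \widetilde{S}) \to (\C, S)$ of quasi-schemoids is proper if and only if its partition $\widetilde{S}$ is precisely the pullback partition $\{q^{-1}(\sigma)\}_{\sigma \in S}$. The plan is to unwind the definition of \emph{proper} (namely: $q$ is a morphism of quasi-schemoids which is injective on the partition of $mor(\E)$) together with the fact, from Proposition \ref{prop:extension}, that $q$ is \emph{full} and the identity on objects, hence surjective on morphisms. First I would prove the easy implication: if $\widetilde{S} = \{q^{-1}(\sigma)\}_{\sigma \in S}$ is genuinely a partition of $mor(\E)$, then $q$ visibly sends each block $q^{-1}(\sigma)$ into $\sigma$, so it is a morphism of quasi-schemoids; and since $q$ is surjective on morphisms (being full and the identity on objects), distinct blocks $q^{-1}(\sigma) \neq q^{-1}(\sigma')$ map to distinct blocks $\sigma \neq \sigma'$, so $q$ is injective on the partition. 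Hence $q$ is proper. (One should also note $\{q^{-1}(\sigma)\}$ really is a partition: the $\sigma$ partition $mor(\C)$, preimages of disjoint sets are disjoint, and surjectivity of $q$ gives that the nonempty preimages cover $mor(\E)$.)

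For the converse, assume $q$ is proper, with partition $\widetilde{S} = \{\tau_j\}_{j \in J}$ on $mor(\E)$. Because $q$ is a morphism of quasi-schemoids, each block $\tau_j$ lands in some block $\sigma_{i(j)} \in S$, giving a map $i : J \to I$ on index sets; properness says this $i$ is injective. I would then argue that $i$ is also surjective: given $\sigma \in S$, pick $f \in \sigma$; by fullness of $q$ there is $\widetilde f \in mor(\E)$ with $q(\widetilde f) = f$, and $\widetilde f$ lies in some $\tau_j$, whose image block must be the one containing $f$, i.e.\ $\sigma$. So $i$ is a bijection, and we may reindex so that $q(\tau_\sigma) \subset \sigma$ for each $\sigma \in S$, with the $\tau_\sigma$ pairwise distinct blocks exhausting $\widetilde{S}$. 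It remains to show $\tau_\sigma = q^{-1}(\sigma)$. The inclusion $\tau_\sigma \subset q^{-1}(\sigma)$ is immediate. For the reverse, take $\widetilde g \in q^{-1}(\sigma)$; it lies in exactly one block $\tau_{\sigma'}$ of $\widetilde{S}$, and then $q(\widetilde g) \in \sigma'$, forcing $\sigma' = \sigma$ since the $\sigma$ are disjoint and $q(\widetilde g) \in \sigma$. Hence $\widetilde g \in \tau_\sigma$, giving $q^{-1}(\sigma) \subset \tau_\sigma$ and therefore equality; thus $\widetilde{S} = \{q^{-1}(\sigma)\}_{\sigma \in S}$.

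The only mild subtlety — and the step I would be most careful about — is the repeated use of the two structural facts about $q$ coming from it being (part of) a linear extension: that $q$ is the identity on objects and is \emph{full}, which together force $q$ to be surjective on the set of morphisms. Surjectivity is what makes the index map $i$ bijective and what prevents $\widetilde{S}$ from being a strictly finer partition than the pullback one; without it the lemma would fail. Everything else is a routine check that preimages of a partition under a surjection form a partition and that "injective on blocks" upgrades the containment $q(\tau_\sigma)\subset\sigma$ to the exact equality $\tau_\sigma = q^{-1}(\sigma)$. I would state these uses explicitly rather than leaving them implicit.
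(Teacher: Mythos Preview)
Your argument is correct and is exactly the routine unpacking the authors have in mind: the paper's own proof consists of the single line ``It is immediate.'' Your explicit use of the fullness/identity-on-objects of $q$ to get surjectivity on morphisms, and the bijection on index sets, is precisely what makes it immediate.
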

\begin{proof}
It is immediate. 
\end{proof}

\begin{proof}[Proof of  Proposition \ref{prop:extension}]
Let $H$ be a partition of $mor({\mathcal E})$ defined by the sets $q^{-1}(\sigma)$ for $\sigma \in S$.  
We prove that $H$ satisfies the concatenation axiom in Definition  \ref{defn:schemoid}. 
Consider a commutative diagram 
$$
\xymatrix@C25pt@R15pt{
q^{-1}(\tau)\times_{\mathcal E}q^{-1}(\mu) & \widetilde{\pi_{\tau\mu}}^{-1}(q^{-1}(\sigma)) \ar[r]^-q \ar[l]_-{\supset} 
\ar[d]_{\widetilde{\pi_{\tau\mu}^\sigma}} & 
{\pi_{\tau\mu}}^{-1}(\sigma) \ar[r]^-{\subset} \ar[d]^{{\pi_{\tau\mu}^\sigma}} & \tau \times_{\mathcal C} \mu \\
& q^{-1}(\sigma) \ar[r]_{q} & \sigma, & 
}
$$
where $\pi_{\tau\mu}$ and $\widetilde{\pi_{\tau\mu}}$ are the maps defined by the concatenation of morphisms and 
$\pi_{\tau\mu}^\sigma$ and $\widetilde{\pi_{\tau\mu}^\sigma}$ denote the restrictions of $\pi_{\tau\mu}$ and $\widetilde{\pi_{\tau\mu}}$, respectively. 
We take a morphism $f^0 \in q^{-1}(f)$ for each $f$. Let $+ : D_f\times D_f \to D_f$ denote the sum on $D_f$. 
For any $f^0+\gamma$ in $q^{-1}(\sigma)$, we define  maps between sets  
$$
\xymatrix@C40pt@R15pt{
(\pi_{\tau\mu}^\sigma)^{-1}(f)\times (+)^{-1}\{\gamma\} \ar@<0.5ex>[r]^-{\theta}&  
\ar@<0.5ex>[l]^-{\xi}(\widetilde{\pi_{\tau\mu}^\sigma})^{-1}(f^0+\gamma) 
}
$$by 
\begin{eqnarray*}
\theta(f_i, f_j, \alpha, \beta) &=& (f_i^0-(f_j^*)^{-1}u^0 + (f_j^*)^{-1}\alpha, f_j^0+((f_i)_*)^{-1}\beta)  \ \ \text{and}  
\\
\xi(f_i^0+\alpha', f_j^0+\beta') &=& (q(f_i^0), q(f_j^0), u^0+(f_j)^*(\alpha'), ((f_i)_*)\beta'), 
\end{eqnarray*}
where $u^0$ is an element in $D_f$ which is uniquely determined by the equality 
$f_i^0\circ f_j^0 = f^0 + u^0$. We see that $\theta$ is a bijection with inverse $\xi$.  Moreover, we have 
$(+)^{-1}\{\gamma\} \cong D_f \cong D_{1_{s(f)}}$ for any $\gamma$ since, by assumption, 
$f_* : D_{1_{s(f)}} \to D_f$ is invertible for $f \in \sigma$. 
The assumption that $D_{1_{s(f)}}\cong D_{1_{s(g)}}$ for any $f, g \in \sigma$ implies that the cardinal number of 
$(+)^{-1}\{\gamma\}$ does depend on only the choice of $\sigma \in S$. 
This enables us to conclude that $(\mathcal E, H)$ is a quasi-schemoid. It is immediate that $q$ is a proper morphism.  

The uniqueness of the schemoid structure on ${\mathcal E}$ follows from Lemma \ref{lem:proper_map}. 
This completes the proof. 
\end{proof}

Let $\C$ be a small category. Let $\pi : F(\C) \to \C^{op}\times \C$ denote the natural functors \cite[(1.16)]{B-W} defined by 
$\pi(f) = (s(f), t(f))$ for $f \in ob(F(\C))$ and $\pi(\alpha, \beta) =(\beta, \alpha)$ for $(\alpha, \beta) \in mor (F(\C))$. 
Let $p :  \C^{op}\times \C \to \C$  be the obvious forgetful functor. 
 Proposition \ref{prop:extension} and its proof deduce the following result.   

\begin{thm} \label{thm:extension_schemoid}
Let $H: \C \to \K\text{\em -Mod}$ be a functor and $D$ the natural system induced by $H$, namely 
$D=\pi^*p^*H$. Let  $D_+ \to \E \stackrel{q}{\to} \C$ be a linear extension. Suppose that 
$(\C, S, T)$ is an association schemoid whose underlying category  $\C$ is a connected groupoid 
with $T(f) = f^{-1}$ for  $f\in mor(\C)$. Then 
the quasi-schemoid $(\E, \widetilde{S})$ defined in Proposition \ref{prop:extension} admits a schemoid structure for which $\E$ is a groupoid with 
$\widetilde{T}(\widetilde{f})=\widetilde{f}^{-1}$, and $q$ is a morphism of association schemoids. 
Moreover for $\sigma, \tau$ and $\mu$ in $\widetilde{S}$, one has $p_{\tau\mu}^\sigma = \sharp D_g p_{q(\tau)q(\mu)}^{q(\sigma)}$ 
for some $g$ and hence any $g$. 
\end{thm}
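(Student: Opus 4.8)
The plan is to build on Proposition \ref{prop:extension} and its proof, upgrading the quasi-schemoid structure produced there to a genuine association schemoid structure. First I would check that the hypotheses of Proposition \ref{prop:extension} are met: since $\C$ is a connected groupoid, every $D_{1_{s(f)}}$ is isomorphic to $D_{1_{s(g)}}$ (because $D=\pi^*p^*H$ and $H$ is a functor on a connected groupoid, so $H$ sends every object to an isomorphic module), and for each morphism $f$ the maps $f_*=D(f,1)$ and $f^*=D(1,g)$ are invertible since they are images of morphisms in $F(\C)$ under a functor and $\C$ (hence $F(\C)$ after applying $\pi$) consists of isomorphisms. Thus $(\E,\widetilde{S})$ with $\widetilde{S}=\{q^{-1}(\sigma)\}_{\sigma\in S}$ is a quasi-schemoid and $q$ is a proper morphism by Lemma \ref{lem:proper_map}.

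Next I would verify that $\E$ is a groupoid. A morphism of $\E$ has the form $f_0+\alpha$; using the linear distributivity law (c) together with the invertibility of $f_*$ and $f^*$, I would exhibit an explicit two-sided inverse of $f_0+\alpha$, namely $(f^{-1})_0 + \beta$ for a suitable $\beta \in D_{f^{-1}}$ chosen so that the composite equals $(1_{s(f)})_0 + 0 = 1_{s(f)}$; this works because the action of $D_{1_{s(f)}}$ on $q^{-1}(1_{s(f)})$ is transitive and effective, and one can solve for $\beta$ using that $(f^{-1})_*$ is invertible. Then I would define $\widetilde{T}:\E\to\E$ by $\widetilde{T}(\widetilde{f})=\widetilde{f}^{-1}$; this is a contravariant functor with $\widetilde{T}^2=\mathrm{id}_\E$ automatically. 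To see it is a schemoid, I must check $(\widetilde{S})^*$ is stable, i.e. $\widetilde{T}(q^{-1}(\sigma))=q^{-1}(\sigma^*)$ for each $\sigma\in S$: this follows since $q$ is a functor commuting with inverses (as $q$ is the identity on objects and full, $q(\widetilde{f}^{-1})=q(\widetilde{f})^{-1}=T(q(\widetilde{f}))$), and $T(\sigma)=\sigma^*\in S$ by hypothesis. Condition (i) of Definition \ref{defn:association_schemoids} for $\widetilde{S}$ follows from the corresponding condition for $S$ together with the fact that $q$ preserves identities and $q^{-1}$ of the identity-containing part is exactly the identity-containing part of $\E$. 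Compatibility $qT=\widetilde{T}q$ is built in, so $q$ is a morphism of association schemoids.

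For the structure-constant formula I would return to the bijection $\theta$ (with inverse $\xi$) constructed in the proof of Proposition \ref{prop:extension}, which identifies $(\widetilde{\pi^\sigma_{\tau\mu}})^{-1}(f^0+\gamma)$ with $(\pi^\sigma_{q(\tau)q(\mu)})^{-1}(f)\times(+)^{-1}\{\gamma\}$. Taking cardinalities gives $p^{\sigma}_{\tau\mu}=p^{q(\sigma)}_{q(\tau)q(\mu)}\cdot\sharp(+)^{-1}\{\gamma\}$, and since the fiber of the addition map $D_f\times D_f\to D_f$ over any element has cardinality $\sharp D_f$, and $\theta$ shows this equals $\sharp D_{1_{s(f)}}$ which in a connected groupoid is independent of the vertex, I get $p^{\sigma}_{\tau\mu}=\sharp D_g\cdot p^{q(\sigma)}_{q(\tau)q(\mu)}$ for any $g$; here $D_g$ is well-defined up to isomorphism on the whole component. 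The main obstacle I anticipate is bookkeeping rather than conceptual: namely being careful that the "connected groupoid" hypothesis is used in exactly the right places (to make $\sharp D_g$ independent of $g$, and to guarantee the isomorphisms $D_{1_{s(f)}}\cong D_{1_{s(g)}}$ needed for Proposition \ref{prop:extension}), and that the explicit inverse of $f_0+\alpha$ is written consistently with the sign conventions in the linear distributivity law. Once those are pinned down, the proof is essentially a reading-off from the already-established bijections.
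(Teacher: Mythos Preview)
Your outline is correct and tracks the paper's proof closely: verify the hypotheses of Proposition \ref{prop:extension}, check condition (i) of Definition \ref{defn:association_schemoids} via $q^{-1}(J_\C)=J_\E$, show $\E$ is a groupoid, set $\widetilde T(\widetilde f)=\widetilde f^{-1}$, verify $Tq=q\widetilde T$ so that $q^{-1}(\sigma)^*=q^{-1}(\sigma^*)$, and read off the structure-constant formula from the bijection $\theta$ of Proposition \ref{prop:extension}.

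The one genuine divergence is in how you show $\E$ is a groupoid. The paper passes to an explicit normalized-cocycle model $(g,\beta)\circ(f,\alpha)=(gf,-\Delta(g,f)+g_*\alpha+f^*\beta)$ (noting $f^*=\mathrm{id}$ since $D=\pi^*p^*H$), writes down the candidate $(f,\alpha)^{-1}=(f^{-1},(f_*)^{-1}(-\alpha+\Delta(f,f^{-1})))$, and then needs a preparatory lemma---$f_*\Delta(f^{-1},f)=f^*\Delta(f,f^{-1})$, obtained from $\delta\Delta(f,f^{-1},f)=0$---to verify that this is a \emph{two}-sided inverse. Your abstract argument (solve for $\beta$ using invertibility of $f_*$) is cleaner and avoids that lemma, but as written it yields only a one-sided inverse: you should also solve for a left inverse and then invoke the standard fact that a left and a right inverse coincide. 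Two minor points: the map you want to invert for the composite you describe is $f_*$ (or $f^*$), not $(f^{-1})_*$; and writing $(1_{s(f)})_0+0=1_{s(f)}$ presumes the chosen section sends identities to identities, which one may normalize but should say.
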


A linear extension $D_+ \to \E \stackrel{q}{\to} \C$ in Proposition \ref{prop:extension} 
or Theorem \ref{thm:extension_schemoid} is called a {\it schemoid extension} of $(\C, S)$. 
 
We prove Theorem \ref{thm:extension_schemoid} by using the following lemma. 

\begin{lem}\label{lem:lem1} Let $\Delta$ be a normalized cocycle in $F^2(\C; D)$; see \cite[(1.9) Theorem]{B-D}.   
With the same assumption as in Theorem \ref{thm:extension_schemoid}, one has
$
f_*\Delta(f^{-1}, f) = f^*\Delta(f, f^{-1}). 
$
\end{lem}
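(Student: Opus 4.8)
\textbf{Proof proposal for Lemma \ref{lem:lem1}.}

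The plan is to exploit the cocycle condition for $\Delta$ together with normalization, applied to the specific factorization path $f^{-1}\circ f = 1_{s(f)}$ and $f\circ f^{-1}=1_{t(f)}$. First I would recall that a $2$-cocycle $\Delta \in F^2(\C; D)$ satisfies, for every composable triple $h, g, f$ in $\C$, the identity
$$
h_*\Delta(g, f) - \Delta(hg, f) + \Delta(h, gf) - g^*\Delta(h, g) = 0,
$$
where the upper/lower star operations are those induced by $D = \pi^*p^*H$. Normalization means $\Delta(1, -) = 0 = \Delta(-, 1)$.

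Next I would specialize the cocycle identity twice. Applying it to the triple $f^{-1}, f, f^{-1}$ (with $s(f^{-1})=t(f)$, so these are composable) and using $f^{-1}\circ f = 1_{s(f)}$ together with $f\circ f^{-1} = 1_{t(f)}$, the two ``outer'' terms $\Delta(f^{-1}f, f^{-1}) = \Delta(1_{s(f)}, f^{-1})$ and $\Delta(f^{-1}, ff^{-1}) = \Delta(f^{-1}, 1_{t(f)})$ vanish by normalization, leaving
$$
(f^{-1})_*\Delta(f, f^{-1}) = f^*\Delta(f^{-1}, f).
$$
Then I would apply the cocycle identity to the triple $f, f^{-1}, f$, where again the outer terms $\Delta(ff^{-1}, f) = \Delta(1_{t(f)}, f)$ and $\Delta(f, f^{-1}f) = \Delta(f, 1_{s(f)})$ vanish, yielding
$$
f_*\Delta(f^{-1}, f) = (f^{-1})^*\Delta(f, f^{-1}).
$$
Now the key point is that, because $D = \pi^*p^*H$ is pulled back from a functor $H$ on $\C$ alone, the operator $D(\alpha, \beta)$ depends only on $\alpha$; concretely $f_* = D(f, 1)$ and $f^* = D(1, f)$ coincide as maps $H(s(f)) \to H(t(f))$ with $H(f)$ — more precisely $p^*H$ kills the contravariant slot, so $f^* = (f^{-1})_*$ and $f_* = (f^{-1})^*$ under the identifications built into $D$ being a natural system of the special form $\pi^*p^*H$. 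Substituting these equalities into the two displayed relations makes them identical and gives precisely $f_*\Delta(f^{-1}, f) = f^*\Delta(f, f^{-1})$.

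The main obstacle I anticipate is bookkeeping of the variance: one must be careful that in the natural system $D=\pi^*p^*H$ the induced maps on the relevant hom-groups really do identify $f^*$ with $(f^{-1})_*$ (and $f_*$ with $(f^{-1})^*$), which is where the hypothesis ``$D$ is induced by a functor $H : \C \to \K\text{-Mod}$'' — rather than an arbitrary natural system — is essential; for a general $D$ the lemma would fail. A secondary subtlety is checking that the invertibility of $f^*$ and $f_*$ (guaranteed since $\C$ is a groupoid and $H$ a functor, so $H(f)$ is invertible) is not actually needed for this particular identity, only the two normalized cocycle relations; I would state it cleanly without invoking invertibility beyond what streamlines the exposition.
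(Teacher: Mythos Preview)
Your strategy---evaluate the cocycle identity on a well-chosen triple and invoke normalization---is exactly the paper's approach, but your execution contains an error that forces you into an unnecessary and incorrect detour. The Baues--Wirsching coboundary for a $2$-cochain is
\[
(\delta\Delta)(\lambda_1,\lambda_2,\lambda_3)=(\lambda_1)_*\Delta(\lambda_2,\lambda_3)-\Delta(\lambda_1\lambda_2,\lambda_3)+\Delta(\lambda_1,\lambda_2\lambda_3)-(\lambda_3)^*\Delta(\lambda_1,\lambda_2),
\]
so the last term is pulled back along the \emph{rightmost} arrow $\lambda_3$, not the middle one. With this correction, your second application (to the triple $(f,f^{-1},f)$) already reads
\[
0=f_*\Delta(f^{-1},f)-\Delta(1,f)+\Delta(f,1)-f^*\Delta(f,f^{-1}),
\]
and normalization gives the lemma immediately. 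This is precisely the paper's one-line proof; no second triple is needed.

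The identification you invoke to glue the two relations, namely $f^*=(f^{-1})_*$ and $f_*=(f^{-1})^*$, is false for $D=\pi^*p^*H$. In that situation $D(\alpha,\beta)=H(\alpha)$, so $f^*=D(1,f)=H(1)=\mathrm{id}$ while $(f^{-1})_*=D(f^{-1},1)=H(f^{-1})$; these agree only when $H(f)=\mathrm{id}$. (The paper itself uses $f^*=\mathrm{id}$ explicitly in the proof of Theorem~\ref{thm:extension_schemoid}.) So your ``key point'' paragraph should be discarded; once the coboundary formula is fixed, it is simply not needed.
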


\begin{proof}
We see that $$0 = (\delta \Delta)(f, f^{-1}, f) = f_*\Delta(f^{-1}, f) -\Delta(1, f) + \Delta(f, 1) - f^*\Delta(f, f^{-1}). $$ 
Observe that $\Delta(1, g) = 0 = \Delta(g, 1)$ because $\Delta$ is normalized cocycle. 
This completes the proof. 
\end{proof}

\begin{proof}[Proof of Theorem \ref{thm:extension_schemoid}]
We verify that for the partition $H$ 
the condition (i) in Definition  \ref{defn:association_schemoids} holds. Since $q$ is the identity map on objects, 
it follows that $$q^{-1}(\amalg_{y\in ob\C}\text{Hom}_\C(y, y))=\amalg_{x\in ob\E}\text{Hom}_\E(x, x)=:J.$$
We see that for $q^{-1}(\sigma) \in H$, if $q^{-1}(\sigma) \cap J \neq \phi$, then 
$\sigma \cap  \amalg_{y\in ob\C}\text{Hom}_\C(y, y) \neq \phi$. Thus $\sigma \subset \amalg_{y\in ob\C}\text{Hom}_\C(y, y)$ 
as $\C$ is a quasi-schemoid. This yields that $q^{-1}(\sigma)\subset J$. 

Without loss of generality, we assume that 
$(g, \beta)\circ (f, \alpha)= (gf, -\Delta(g,f) + g_*\alpha + f^*\beta)$
for morphisms $(f, \alpha)$ and $(g, \beta)$ in ${\mathcal E}$, 
where $\Delta$ is a normalized cocycle; 
see \cite[(1.9) Theorem]{B-D} and the proof of \cite[(2.3) Theorem]{B-W}. 
Observe that $D=\pi^*p^*D_M$ and hence  $f^*= id$.  We then have $(f, \alpha)\circ (1, 0) = (f, \alpha) = 
(1, 0) \circ (f, \alpha)$. For an element $(f, \alpha)$, we define 
$$
(f, \alpha)^{-1}= (f^{-1}, \beta) = (f^{-1}, (f_*)^{-1}(-\alpha + \Delta(f, f^{-1})). 
$$
Then we see that 
\begin{eqnarray*}
(f, \alpha)\circ (f^{-1}, \beta)&=& (ff^{-1}, -\Delta(f, f^{-1}) + f_*(f_*^{-1}(-\alpha + \Delta(f,f^{-1}) + (f^{-1})^*\alpha) \\
&=&(1, 0)  \ \ \ \text{and that} \\
(f^{-1}, \beta)\circ (f, \alpha) &=& (f^{-1}f, -\Delta(f^{-1},  f) + (f^{-1})_*\alpha + 
f^*((f^{-1})_*(-\alpha + \Delta(f, f^{-1}))) \\
&=& (1, -\Delta(f^{-1}, f) + (f^{-1})_*\Delta(f, f^{-1})) =(1, 0).
\end{eqnarray*}
The last equality follows from Lemma \ref{lem:lem1}. 

We define a natural transformation $T : \E \to \E$ by $\widetilde{T}(\widetilde{f})=\widetilde{f}^{-1}$. 
Then it follows that $\widetilde{T}$ is a contravariant functor 
and $\widetilde{T}^2= id$. 
It is readily seen that $Tq=q\widetilde{T}$ and hence    
$q^{-1}(\sigma)^* :=\{\widetilde{T}(\widetilde{f}) \mid \widetilde{f} \in q^{-1}(\sigma) \}
= q^{-1}(\sigma^*).
$
We have the result. 
\end{proof}


We recall the definition of an equivalence between linear extensions. Two linear extensions  
$D_+ \to {\mathcal E} \stackrel{q}{\to} \C$ and $D_+ \to {\mathcal E} \stackrel{q'}{\to} \C$ are {\it equivalent} if there exists an isomorphism 
$\varepsilon : {\mathcal E} \stackrel{\cong}{\to} {\mathcal E}'$ of categories with $q'\varepsilon = q$ and with 
$\varepsilon(f_0+\alpha) = \varepsilon(f_0) + \alpha$ for $f_0 \in mor({\mathcal E})$ and $\alpha \in D_{qf_0}$; see \cite[page 193]{B-W}. 
A linear extension ${\mathcal E} \stackrel{q}{\to} \C$ is {\it split} 
if there exists a functor $s : \C  \to {\mathcal E}$ such that $qs = 1$.

For an $\mathbb{Z}$-module $M$, we define a natural system $\underline{M} : F(\C)  \to \mathbb{Z}\text{-Mod}$, 
which is so-called the trivial representation, by   
$\underline{M}(x)= M$ and $\underline{M}(f) = id_{M}$ for $x \in ob(\C)$ and $f \in mor(\C)$.  
Then the Baues-Wirsching cohomology 
$H_{BW}^*(\C, D)$ is defined by 
$$
H_{BW}^*(\C, D) = {\rm Ext}_{{\bf Func}(F(\C),  \mathbb{Z}\text{-Mod})}^{\ *}(\underline{\mathbb{Z}}, D). 
$$
Originally, the Baues-Wirsching cohomology is defined by using a cochain complex; see \cite[(1.4) Definition]{B-W}. 
The result \cite[(4.4) Theorem]{B-W} allows us to obtain the extension functor description mentioned above.  

A functor $F: \C \to \mathbb{Z}\text{-Mod}$ induces a natural system 
$\pi^*p^*F : F(\C) \to  \mathbb{Z}\text{-Mod}$. 
Then we see that  the cohomology 
$H^*(\C, F):={\rm Ext}_{{\bf Func}(\C, \K\text{-Mod})}^*(\underline{\mathbb{Z}}, F)$ is isomorphic to 
the Baues-Wirsching cohomology 
$H_{BW}^*(\C, \pi^*p^*F)$; see \cite[(8.5) Theorem]{B-W}.  In particular, for the trivial representation $\underline{\mathbb{Z}}$, we have 
$H_{BW}^*(\C, \underline{\mathbb{Z}})\cong H^*(\C, \underline{\mathbb{Z}})\cong H^*(B\C; \mathbb{Z}),$ 
where the last one is the singular cohomology of the classifying space of $\C$.

The result \cite[(2.3) Theorem]{B-W} implies that the second Baues-Wirsching cohomology classifies 
linear extensions over a small category.  
Thus Proposition \ref{prop:extension} enables us to deduce the following result. 

\begin{thm}\label{classification_ex}
Let $(\C, S)$ be a quasi-schemoid and $D : F(\C) \to \mathbb{Z}\text{-Mod}$ a natural system for which $f_*$ and $f^*$ are invertible 
for any $f \in mor(\C)$ and $D_{1_{s(f)}}\cong D_{1_{s(g)}}$ for any $\sigma \in S$ and $f, g \in \sigma$. 
Then the Baues-Wirsching cohomology $H^2_{BW}(\C; D)$ classifies schemoid extensions of the form 
$D_+ \to \E \stackrel{q}{\to} \C$ with $q$ proper.  
\end{thm}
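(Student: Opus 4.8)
The plan is to reduce Theorem \ref{classification_ex} to the classical Baues--Wirsching classification \cite[(2.3) Theorem]{B-W} by exhibiting a bijection between the set of schemoid extensions of $(\C,S)$ by $D$ with $q$ proper, modulo the natural equivalence of linear extensions, and the set of linear extensions of $\C$ by $D$, modulo the same equivalence; the latter set is classified by $H^2_{BW}(\C;D)$. Because of the hypotheses on $D$ (namely $f_*$ and $f^*$ invertible for all $f$, and $D_{1_{s(f)}}\cong D_{1_{s(g)}}$ whenever $f,g$ lie in a common $\sigma\in S$), Proposition \ref{prop:extension} applies to \emph{every} linear extension $D_+\to\E\stackrel{q}{\to}\C$, so each such extension acquires a canonical quasi-schemoid structure $(\E,\widetilde S)$ with $\widetilde S=\{q^{-1}(\sigma)\}_{\sigma\in S}$ making $q$ proper. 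This is the crux: the proposition guarantees the schemoid structure exists, and Lemma \ref{lem:proper_map} guarantees it is \emph{unique} among those making $q$ proper. Hence the forgetful map from schemoid extensions (with proper $q$) to ordinary linear extensions is a bijection on the underlying data, and it only remains to check that it respects the two equivalence relations.

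First I would set up the two sides carefully. On the one hand, $H^2_{BW}(\C;D)$ is in bijection with equivalence classes of linear extensions $D_+\to\E\stackrel{q}{\to}\C$ by \cite[(2.3) Theorem]{B-W} together with the $\mathrm{Ext}$-description recalled in the excerpt. On the other hand, define a \emph{schemoid extension of $(\C,S)$} to be a linear extension $D_+\to\E\stackrel{q}{\to}\C$ equipped with the quasi-schemoid structure on $\E$ furnished by Proposition \ref{prop:extension}, and declare two such to be equivalent exactly when they are equivalent as linear extensions. Then I would argue that the assignment ``linear extension $\mapsto$ schemoid extension'' and its inverse ``forget the partition'' are mutually inverse bijections between equivalence classes: surjectivity and the fact that the schemoid structure is determined by $\C$, $S$ and $q$ are both immediate from Proposition \ref{prop:extension} and Lemma \ref{lem:proper_map}; injectivity amounts to the observation that an equivalence $\varepsilon:\E\stackrel{\cong}{\to}\E'$ of linear extensions with $q'\varepsilon=q$ automatically satisfies $\varepsilon(q^{-1}(\sigma))=q'^{-1}(\sigma)$ for each $\sigma\in S$, hence is an isomorphism of quasi-schemoids, and conversely.

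Composing the two bijections gives $H^2_{BW}(\C;D)\cong\{\text{equivalence classes of schemoid extensions }D_+\to\E\stackrel{q}{\to}\C\text{ with }q\text{ proper}\}$, which is the assertion. The main obstacle I anticipate is not in the schemoid bookkeeping — that part is essentially formal once Proposition \ref{prop:extension} and Lemma \ref{lem:proper_map} are in hand — but in making sure the equivalence relation used on the schemoid side is genuinely the \emph{right} one, i.e. that ``equivalent as linear extensions'' does not collapse or refine the classification once one insists on proper $q$. This is handled by the remark above that the equivalence $\varepsilon$ preserves the fibres $q^{-1}(\sigma)$ \emph{setwise}, which is forced by $q'\varepsilon=q$; so no refinement occurs, and the split case ($q$ admits a functor section) corresponds as usual to the zero class, recovering Corollary \ref{cor:trivial_extensions} when $\C$ is (the schemoid associated to) an association scheme, where $B\C$ is a disjoint union of points and hence $H^2_{BW}(\C;D)=0$.
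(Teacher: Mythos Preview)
Your proposal is correct and follows precisely the paper's approach: the paper offers no separate proof but simply observes, in the sentence preceding the theorem, that \cite[(2.3) Theorem]{B-W} classifies linear extensions and that Proposition \ref{prop:extension} (with Lemma \ref{lem:proper_map}) supplies the unique proper quasi-schemoid structure on each such extension, which is exactly the bijection you spell out in detail. One minor slip in your closing parenthetical: for $j(X,S)$ the classifying space is contractible (the underlying category is equivalent to the trivial category $\bullet$, as noted before Corollary \ref{cor:trivial_extensions}), not a disjoint union of points, though the conclusion $H^2_{BW}=0$ is of course unaffected.
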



Let $(X, S)$ be an association scheme. 
Since the underlying category of $j(X, S)$ is a directed complete graph, we see that the category is equivalent to the trivial category $\bullet$, namely 
the category consisting of one object and one morphism. 
It is immediate that $H_{BW}^*(\bullet; D)=0$ for $* >0$ and any natural system $D$ on $\bullet$.
Then the result \cite[(2.3) Theorem]{B-W} deduces the following corollary. 

\begin{cor}\label{cor:trivial_extensions} Every schemoid extension of $j(X, S)$ 
is split.  
\end{cor}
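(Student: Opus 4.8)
The plan is to deduce Corollary~\ref{cor:trivial_extensions} directly from Corollary~\ref{cor:trivial_extensions}'s immediate predecessors, namely the contractibility of the underlying category of $j(X,S)$ together with the Baues--Wirsching classification theorem \cite[(2.3) Theorem]{B-W}. First I would observe that, as already noted in the text, the underlying category $\C_X$ of $j(X,S)$ is a directed complete graph on the vertex set $X$: between any two objects there is exactly one morphism. Hence $\C_X$ is equivalent (indeed isomorphic after choosing a basepoint) to the trivial category $\bullet$ with one object and one (identity) morphism; in particular $\C_X$ is a connected groupoid, so for every morphism $f$ the maps $f_*$ and $f^*$ attached to any natural system are automatically invertible, and $D_{1_{s(f)}} \cong D_{1_{s(g)}}$ for all $f,g$ in a given $\sigma \in S$ (all the involved objects are isomorphic). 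Thus Proposition~\ref{prop:extension} and Theorem~\ref{classification_ex} apply to $(\C_X, S)$ with any natural system $D$.

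Next I would invoke Theorem~\ref{classification_ex}: schemoid extensions $D_+ \to \E \xrightarrow{q} \C_X$ with $q$ proper are classified by $H^2_{BW}(\C_X; D)$. Since $\C_X$ is equivalent to $\bullet$ and Baues--Wirsching cohomology is invariant under equivalences of categories (it computes $\mathrm{Ext}$ in the functor category over $F(\C)$, and $F(-)$ carries equivalences to equivalences), we get $H^*_{BW}(\C_X; D) \cong H^*_{BW}(\bullet; D') = 0$ for $* > 0$ and any natural system $D$, exactly as the excerpt records just before the statement. Therefore $H^2_{BW}(\C_X; D) = 0$, so there is, up to equivalence of linear extensions, only one schemoid extension of $j(X,S)$ by any such $D$, namely the one corresponding to the zero cohomology class.

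Finally I would identify that unique class with the split extension. By \cite[(2.3) Theorem]{B-W}, the zero element of $H^2_{BW}(\C; D)$ is precisely the equivalence class of the split linear extension (the one admitting a functor section $s : \C \to \E$ with $qs = 1$); and by Proposition~\ref{prop:extension} the schemoid structure on such an $\E$ is the unique proper one. Hence every schemoid extension $D_+ \to \E \xrightarrow{q} \C$ of $j(X,S)$ is equivalent, as a linear extension, to a split one, and in particular $\E \xrightarrow{q} \C$ is split. This gives the corollary.

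The only point requiring a little care — and the step I would expect to be the real content rather than bookkeeping — is the invariance statement: one must be sure that passing from $\C_X$ to the equivalent category $\bullet$ really does preserve the relevant cohomology and the classification of extensions. This is exactly what \cite[(8.5) Theorem]{B-W} and the $\mathrm{Ext}$-description recalled in the excerpt provide, so no new work is needed; alternatively, one can bypass invariance entirely by noting that a functor section $s : \C_X \to \E$ can be written down by hand, sending the unique morphism $(x,y)$ to any chosen lift and using that composites in $\C_X$ are forced, then checking functoriality via the cocycle normalization as in the proof of Theorem~\ref{thm:extension_schemoid}. Either route closes the argument.
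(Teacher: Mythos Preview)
Your proposal is correct and follows essentially the same route as the paper: the paper's argument (given in the sentences immediately preceding the corollary) is precisely that the underlying category of $j(X,S)$ is equivalent to the trivial category $\bullet$, that $H^*_{BW}(\bullet;D)=0$ for $*>0$, and that the Baues--Wirsching classification \cite[(2.3) Theorem]{B-W} then forces every extension to be split. Your write-up is more explicit about checking the hypotheses of Proposition~\ref{prop:extension} and about the equivalence-invariance step, but the underlying idea is identical.
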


\begin{ex}\label{ex:contractible_spaces}
Theorem \ref{classification_ex} asserts that the classification of schemoid extensions is reduced in turn 
to that of extensions of the underlying category. We here comment on such linear extensions of categories. 
   
Let $M$ be an abelian group and $G$ a finite group. 
Let $\underline{M} : \C \times G \to \mathbb{Z}\text{-Mod}$ denote the trivial representation of $\C\times G$. 
We also write $\underline{M}$ for the induced representation 
$\pi^*p^*\underline{M} : F(\C\times G) \to  \mathbb{Z}\text{-Mod}$. 
Suppose that the classifying space $B\C$ is contractible and that $\C$ is finite. 
For example the classifying space of the underlying category 
${\mathcal D}$ of $\C_{[k]}$ in Example \ref{ex4} is contractible because ${\mathcal D}$ has the initial object. 
Then every linear extension $\underline{M}_+ \to \E \stackrel{q}{\to} \C\times G$ is isomorphic to 
a linear extension of the form $\underline{M}_+ \to \C \times K \stackrel{1\times q'}{\to} \C\times G$ which  is induced by some extension 
${M} \to K \stackrel{q'}{\to} G$ of the group $G$. In fact, since $\C$ and $G$ are finite, 
it follows that $B(\C\times G) \cong B\C\times BG$. Then we see that 
$H^2_{BW}(\C \times G; \underline{M}) \cong H^2(B\C\times BG; M)\cong H^2(BG; M)$. More precisely, linear extensions of $\C\times G$ associated with the natural system $\underline{M}$ is classified by the group homology $H^2(G; M)$. 
\end{ex}

\begin{ex}\label{ex:blow-ups}
Let $(X, S)$ be an association scheme. The same argument as in Example \ref{ex:contractible_spaces} 
enables us to deduce that there exists a non-split schemoid extension of  
$j(X, S)\times (\mathbb{Z} \times \mathbb{Z})^\bullet$ while $j(X, S)$ and the first extension  $j(X, S)\times (\mathbb{Z})^\bullet$ 
have no non-split schemoid extension; see Example \ref{ex5}.  
On the other hand, the first extension  $j(X, S)\times (\mathbb{Z}/2)^\bullet$ has only one non-trivial extension 
since $H^2(B(\mathbb{Z}/2); \mathbb{Z}/2) 
\cong H^2({\mathbb R}P^\infty ; \mathbb{Z}/2)\cong {\mathbb Z}/2$. 
\end{ex}

\section{Basic schemoids and admissible morphisms}\label{section:6}

We introduce a subcategory ${\mathcal B}$ of the category of quasi-schemoids for which 
the correspondence sending each object in ${\mathcal B}$ to the schemoid algebra 
is a well-defined functor to the category of (possibly nonunital) algebras.   

\begin{defn} (cf. Definition \ref{defn:semi-thin}) \label{defn:basic} 
A unital quasi-schemoid $(\C, S)$ is {\it basic} if 
the underlying category $\mathcal{C}$ is a groupoid.  
\end{defn}

Every coherent configuration is considered as a complete graph and hence it is a basic association schemoid.  
In general, a morphism of schemes does {\it not} induce a homomorphism between their Bose-Mesner algebras. 
To overcome this inconvenience, French \cite{F} has introduced the notion of  admissible morphisms 
in the category ${\bf AS}$ of association schemes. Generalizing the notion, we introduce admissible maps 
in the category $q{\bf ASmd}$.

\begin{defn}\label{defn:admissible_maps} A morphism $\phi : (\C, S) \to ({\mathcal D}, T)$ of quasi-schemoids is 
{\it admissible} if 
for any $x \in ob(\C)$, $\sigma \in S$ and $g \in \phi(\sigma)$ with $t(g) = \phi(x)$, there exists a morphism 
$f \in \sigma$ such that $t(f) = x$ and $\phi(f) = g$.  
$$
\xymatrix@C20pt@R12pt{& & &  \C \ar[rr]^{\phi} & & {\mathcal D} \\
\cdots \cdots& \bullet  \ar[drr]& \bullet \ar[dr]& \bullet  \ar[d]^f&  & \bullet \ar[d]^{g \in \phi(\sigma)}\\
 &  n_\sigma^\phi \ \text{morphisms}  &     &  x    & & \phi(x) 
}
$$
\end{defn}

\begin{lem}\label{lem:composite}
The composite of any two admissible morphisms is admissible.  
\end{lem}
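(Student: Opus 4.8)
The plan is to unwind the definition of admissibility for the composite of two admissible morphisms $\phi : (\C, S) \to (\D, T)$ and $\psi : (\D, T) \to (\mathcal{E}, U)$ and check the defining lifting property directly. So fix an object $x \in ob(\C)$, a part $\sigma \in S$, and a morphism $h \in (\psi\phi)(\sigma)$ with $t(h) = (\psi\phi)(x) = \psi(\phi(x))$. We must produce $f \in \sigma$ with $t(f) = x$ and $(\psi\phi)(f) = h$. The natural strategy is a two-step lift: first use admissibility of $\psi$ to find a preimage of $h$ in $\D$ ending at $\phi(x)$, then use admissibility of $\phi$ to pull that back into $\C$ ending at $x$.

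The key steps, in order, are as follows. First, I would identify the part of $T$ that contains $\phi(\sigma)$: since $\phi$ is a morphism of quasi-schemoids, $\phi(\sigma) \subset \tau$ for a unique $\tau \in T$, and then $\psi(\tau) \subset \rho$ for a unique $\rho \in U$, so that $(\psi\phi)(\sigma) \subset \rho$ and in particular $h \in \rho$ with $h \in \psi(\tau)$ (here I would need to check $h$ really lies in $\psi(\tau)$, not merely in $\rho$ — this follows because $h \in (\psi\phi)(\sigma) = \psi(\phi(\sigma)) \subset \psi(\tau)$). Second, apply admissibility of $\psi$ with the object $\phi(x) \in ob(\D)$, the part $\tau \in T$, and the morphism $h \in \psi(\tau)$ satisfying $t(h) = \psi(\phi(x))$: this yields $g \in \tau$ with $t(g) = \phi(x)$ and $\psi(g) = h$. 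Third — and here is the subtle point — I need $g$ to actually lie in $\phi(\sigma)$, not just in $\tau$, in order to invoke admissibility of $\phi$. Fourth, granting that, apply admissibility of $\phi$ with the object $x$, the part $\sigma$, and the morphism $g \in \phi(\sigma)$ with $t(g) = \phi(x)$: this yields $f \in \sigma$ with $t(f) = x$ and $\phi(f) = g$, whence $(\psi\phi)(f) = \psi(g) = h$, as required.

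The main obstacle is precisely the third step: the lift $g$ produced by admissibility of $\psi$ is only guaranteed to lie in $\tau$, whereas $\phi(\sigma)$ may be a proper subset of $\tau$. To handle this I would note that the ambient partition argument can be arranged so that the lift lands in $\phi(\sigma)$: since $h \in \psi(\phi(\sigma))$, one may run the admissibility property of $\psi$ not with the whole part $\tau$ but observe that any morphism $g \in \tau$ with $\psi(g) = h$ and $t(g) = \phi(x)$ has a preimage in $\sigma$ under $\phi$ via the admissibility of $\phi$ applied to $g$ directly — which only requires $g \in \phi(\sigma)$. Alternatively, and more cleanly, I would first lift $h$ through $\psi$ to some $g \in \tau$, then lift $g$ through $\phi$ using admissibility of $\phi$ applied to an arbitrary chosen $g' \in \phi(\sigma)$ with $\psi(g') = h$; the existence of such $g'$ is exactly what $h \in \psi(\phi(\sigma))$ gives us, modulo arranging $t(g') = \phi(x)$, which in turn follows from the admissibility of $\psi$. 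In short, the real content is bookkeeping with the chain of part-inclusions $\phi(\sigma) \subset \tau$, $\psi(\tau) \subset \rho$ and making sure the intermediate lift is chosen inside $\phi(\sigma)$; once that is set up, the two applications of the lifting property compose to give the result, and the argument is otherwise routine.
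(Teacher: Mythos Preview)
Your two-step lifting strategy is exactly the right approach, and it is the argument the paper has in mind (the paper does not write out the proof, but defers to \cite[Lemma~3.3]{F}, where this direct lift is carried out). However, the ``obstacle'' you spend most of the proposal on is spurious: you have misread the notation $\phi(\sigma)$ in Definition~\ref{defn:admissible_maps}.

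Recall Definition~\ref{defn:morphisms}(i): by the paper's explicit abuse of notation, for a morphism $\phi$ of quasi-schemoids and $\sigma\in S$ one writes $\phi(\sigma)=\tau$ to mean the unique part $\tau\in T$ with $\phi(\sigma)\subset\tau$, not the literal set-theoretic image. This is forced by the way $\phi(\sigma)$ is used throughout Section~\ref{section:6} (e.g.\ in the structure constants $p_{\phi(\pi)\phi(\rho)}^{\tau}$ of Lemma~\ref{lem:admissible_key2}, which only make sense for parts). Under this reading, the lift $g$ produced by admissibility of $\psi$ lies in $\tau$, and $\tau$ \emph{is} $\phi(\sigma)$; so admissibility of $\phi$ applies immediately to give the desired $f\in\sigma$. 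There is nothing further to arrange.

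Your attempted workarounds under the literal-image reading do not succeed: the ``cleaner'' alternative still asks admissibility of $\psi$ to place the intermediate lift inside the literal image of $\sigma$ under $\phi$, which admissibility does not promise. So if the literal-image reading were intended, your argument would have a genuine gap. Fortunately it is not; once you adopt the paper's convention, your Steps~1--4 go through verbatim and the proof is routine.
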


For elements $\sigma$ and $\tau$ of a quasi-schemoid $S$ over an category $\C$, we define 
a subset $\sigma\tau$ of the power set $2^{mor(\C)}$ by  
$
\sigma\tau = \{ \mu \in S \mid p_{\sigma\tau}^\mu \geq 1 \}. 
$
The notion of the {\it closed subset} of an association scheme is generalized to that of schemoids by the natural way. 
However, as mentioned in the Introduction, we do not deal with such generalized one in this paper. 
 
\begin{lem}\label{lem:closed_set}
If $\tau \in \phi(\rho)\phi(\pi)$ for some $\pi$ and $\rho$ in $S$, then $\tau \in \phi(S)$. 
\end{lem}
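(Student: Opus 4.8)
The plan is to unwind the definition of $\phi(\rho)\phi(\pi)$ and produce, from a witness of $p_{\phi(\rho)\phi(\pi)}^\tau \geq 1$, an explicit decomposition of some morphism of $\tau$ as a composite of a morphism of $\rho$ with a morphism of $\pi$, after first pulling the target vertex back through $\phi$ using admissibility. Concretely, write $\phi : (\C, S) \to (\D, T)$ and recall that by hypothesis $\phi(\rho) \subset \tau'$ and $\phi(\pi) \subset \pi'$ for elements $\tau', \pi'$ of $T$; since $\tau \in \phi(\rho)\phi(\pi)$ we have $p_{\tau'\pi'}^\tau \geq 1$, so there is a morphism $h \in \tau$ together with $a \in \tau'$ and $b \in \pi'$ with $h = a \circ b$ (so $t(b) = s(a)$, $s(h) = s(b)$, $t(h) = t(a)$). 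Because $a \in \tau' \supset \phi(\rho)$ and $a$ lies in the image relevant to $\tau$, I first need $a$ itself to be in the image $\phi(\rho)$; here one uses that the structure constant is computed in $\D$ and that $a$ is realized as $\phi$ of some morphism — more precisely, $h \in \tau$ and $\tau \in T$ meeting the image forces, via the concatenation-compatible bookkeeping, that the factorization witnesses sit in $\phi(\rho)$ and $\phi(\pi)$ rather than merely in $\tau'$ and $\pi'$.

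First I would reduce to the case where $h$ is actually of the form $\phi(\tilde h)$ for some $\tilde h$: since $\tau \in \phi(\rho)\phi(\pi)$ is a statement about $\phi$-images, the relevant witness $h$ may (and should) be taken in $\phi(\C)$, say $h = a \circ b$ with $a \in \phi(\rho)$ and $b \in \phi(\pi)$. Then $a = \phi(f_0)$ for some $f_0 \in \rho$ with $t(f_0) = t(a) = t(h)$. Now apply admissibility of $\phi$ to $\pi$ at the vertex $x := s(f_0)$: we have $s(a) = t(b) = \phi(s(f_0)) = \phi(x)$, and $b \in \phi(\pi)$ with $t(b) = \phi(x)$, so admissibility yields $g_0 \in \pi$ with $t(g_0) = x = s(f_0)$ and $\phi(g_0) = b$. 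Hence $f_0 \circ g_0$ is a legitimate composite in $\C$, it lies in some element $\mu \in S$, and $\phi(f_0 \circ g_0) = \phi(f_0)\circ\phi(g_0) = a \circ b = h \in \tau$. Therefore $\phi(\mu)$ meets $\tau$, and since $\phi$ is a morphism of quasi-schemoids $\phi(\mu) \subset \tau$, i.e. $\tau \in \phi(S)$ as claimed.

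The key steps, in order: (1) extract a factorization witness $h = a\circ b$ from $p^\tau \geq 1$ with $a \in \phi(\rho)$, $b \in \phi(\pi)$; (2) lift $a$ to $f_0 \in \rho$, noting the target constraint $t(f_0) = t(h)$ and hence the source constraint $s(f_0)$; (3) invoke Definition~\ref{defn:admissible_maps} for $\phi$ on $\pi$ at the vertex $s(f_0)$ to lift $b$ to $g_0 \in \pi$ with $t(g_0) = s(f_0)$; (4) form $f_0\circ g_0 \in \mu$ for the appropriate $\mu \in S$ and observe $\phi(f_0\circ g_0) = h \in \tau$; (5) conclude $\phi(\mu) \subset \tau$, so $\tau \in \phi(S)$.

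The main obstacle I anticipate is step (2)–(3): making sure the source and target vertices line up so that admissibility is applicable. Admissibility is a \emph{one-sided} condition — it lifts morphisms with a prescribed target — so one must be careful to apply it on the correct side (to $\pi$, whose morphism $b$ has target $t(b) = s(a) = \phi(s(f_0))$, a vertex that has already been named as $\phi$ of a genuine object of $\C$ thanks to step (2)). If instead one tried to lift $b$ first and then $a$, one would need admissibility at the target of $a$, which is fine too, but the bookkeeping of which object sits under $s(a)$ has to come from somewhere; taking $f_0$ first makes $s(f_0)$ the canonical choice. A secondary subtlety is that in step (1) one should genuinely work with the $\phi$-images $\phi(\rho)$ and $\phi(\pi)$ (elements of $2^{mor(\D)}$) rather than with the ambient $T$-classes $\tau', \pi' \in T$ containing them, since $p_{\tau'\pi'}^\tau \geq 1$ alone would only give a witness in those larger classes; the definition of $\sigma\tau$ via $p^\mu_{\sigma\tau}$ together with the convention $\phi(\sigma) = \tau$ for $\phi(\sigma)\subset\tau$ should be read so that $\tau \in \phi(\rho)\phi(\pi)$ does furnish a factorization through the images, which is exactly what the argument needs.
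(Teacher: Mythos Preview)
Your overall strategy is the right one and matches the approach in French's paper to which the present paper defers: produce a composable pair $(f_0,g_0)\in\rho\times\pi$ whose image under $\phi$ lands in $\tau$, by lifting a factorization witness through admissibility. Steps (3)--(5) are correct once (2) is secured. The problem is that (1)--(2) are not secured.

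The gap is precisely the ``secondary subtlety'' you flag and then wave away. The hypothesis $\tau\in\phi(\rho)\phi(\pi)$ means, by the paper's conventions, that $p_{\tau'\pi'}^\tau\ge 1$ where $\tau',\pi'\in T$ are the $T$-classes containing the set-theoretic images of $\rho,\pi$. It therefore provides, for each $h\in\tau$, a factorization $h=a\circ b$ with $a\in\tau'$ and $b\in\pi'$, \emph{not} with $a$ in the literal image $\{\phi(f):f\in\rho\}$. Your sentence ``the relevant witness $h$ may (and should) be taken in $\phi(\C)$'' is circular: that $\tau$ meets $\phi(mor(\C))$ is equivalent to $\tau\in\phi(S)$, which is the conclusion. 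Likewise ``$a=\phi(f_0)$ for some $f_0\in\rho$'' does not follow from $a\in\tau'$; admissibility only lets you lift $a$ once you know $t(a)=\phi(x)$ for some $x\in ob(\C)$, and nothing you have written establishes that.

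What is actually needed (and what the verbatim argument from \cite{F} uses) is to first \emph{choose} $h\in\tau$ with $t(h)\in\phi(ob(\C))$. Since the structure constant $p_{\tau'\pi'}^\tau$ is independent of the choice of $h\in\tau$, any such $h$ will factor, and then your steps (2)--(5) go through: $t(a)=t(h)=\phi(x)$, lift $a$ by admissibility to $f_0\in\rho$ with $t(f_0)=x$, then $t(b)=s(a)=\phi(s(f_0))$, lift $b$ to $g_0\in\pi$ with $t(g_0)=s(f_0)$, and $\phi(f_0\circ g_0)=h\in\tau$. The existence of such an $h$ is immediate in the association-scheme setting of \cite{F} because there, for any object $y$ and any nonempty class $\tau$, one has $y\tau\neq\emptyset$ (the valency $p_{\tau\tau^*}^{1}$ is constant and positive), so one may take $y=\phi(x)$ for any $x\in ob(\C)$. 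You should supply this step explicitly rather than asserting the witness lies in the image.
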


The proofs of Lemmas \ref{lem:composite} and  \ref{lem:closed_set} proceed verbatim as in those of 
\cite[Lemmas 3.3, 4.1]{F}. Two lemmas below are rewritten versions of \cite[Lemmas 3.11 and 6.3]{F}.

\begin{lem}\label{lem:admossible_key1}
Let $\phi : (\C, S) \to ({\mathcal D}, T)$ be an admissible morphism from a finite quasi-schemoid, whose 
underlying category is a groupoid, to a basic schemoid.  
Then for any $\sigma \in S$, there exists a positive integer $n_\sigma^\phi$ such that for any 
$x \in ob(\C)$ and $g \in \phi(\sigma)$ with $t(g) = \phi(x)$, 
$$
\sharp (\phi^{-1}(g)\cap x\sigma) = n_\sigma^\phi .
$$ 
\end{lem}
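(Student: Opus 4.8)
The plan is to show that $n_\sigma^\phi := \sharp(\phi^{-1}(g)\cap x\sigma)$ is well-defined, i.e. independent of the choice of the pair $(x,g)$ with $t(g)=\phi(x)$ and $g\in\phi(\sigma)$, and that it is at least $1$. Positivity is the cheap part: it is exactly the admissibility hypothesis. Indeed, given $x$ and $g\in\phi(\sigma)$ with $t(g)=\phi(x)$, Definition \ref{defn:admissible_maps} produces a morphism $f\in\sigma$ with $t(f)=x$ and $\phi(f)=g$, so $f\in\phi^{-1}(g)\cap x\sigma$ and the set is nonempty. Hence it suffices to prove the cardinality is constant.

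First I would reduce to comparing two pairs $(x,g)$ and $(x',g')$ over a single $\sigma\in S$, where $g,g'\in\phi(\sigma)$. The key structural tool is that $\C$ is a groupoid and $({\mathcal D},T)$ is basic, so ${\mathcal D}$ is a groupoid as well; thus every morphism is invertible, which lets us translate fibres around by pre- and post-composition. Fix $f\in\sigma$ with $t(f)=x$, $\phi(f)=g$ (using positivity above), and similarly $f'\in\sigma$ with $t(f')=x'$, $\phi(f')=g'$. The morphisms $h_0\in x\sigma$ with $\phi(h_0)=g$ are precisely those of the form... well, one cannot simply compose with $f^{-1}$ inside $\sigma$ (the partition $S$ is not multiplicatively closed), so instead I would pass to the ambient hom-sets: the assignment $h_0\mapsto h_0\circ f^{-1}\cdot(\text{something})$ is not available, and this is where the concatenation axiom of the quasi-schemoid must enter. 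Concretely, consider the element $\alpha\in S$ containing $f^{-1}\circ f'$ (a morphism from $s(f')$ to $s(f)$). Using the concatenation axiom for the triple $(\sigma,\alpha,\sigma)$ together with the fact that $f = f'\circ(f'^{-1}\circ f)$ sits above the composite, I would set up a bijection between $x\sigma$ and $x'\sigma$ compatible with $\phi$, carrying $\phi^{-1}(g)\cap x\sigma$ onto $\phi^{-1}(g')\cap x'\sigma$.

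More precisely, the step I expect to organize the argument is: show that right-composition by a fixed isomorphism $c: x'\to x$ in $\C$ gives a bijection $x\sigma \cong x\sigma'$ where $\sigma'$ is the block of $S$ containing $\sigma\circ c$, and then use the concatenation axiom to count fibres. Since $\phi$ is a functor and ${\mathcal D}$ is a groupoid, $\phi(c)$ is an isomorphism, and right-composition by $\phi(c)^{-1}$ matches $\phi^{-1}(g)\cap x\sigma$ with the corresponding fibre over $g\circ\phi(c)^{-1}$. Iterating/combining these translations, and invoking Lemma \ref{lem:closed_set} as needed to ensure the relevant composites stay inside the image of $\phi$, reduces the general comparison to the case $x=x'$, where one compares $\phi^{-1}(g)\cap x\sigma$ with $\phi^{-1}(g')\cap x\sigma$ for $g,g'$ both in $\phi(\sigma)$ with target $\phi(x)$; but since $\phi(\sigma)$ is a subset of a single block $T$ of ${\mathcal D}$ and ${\mathcal D}$ is basic (condition (i) of Definition \ref{defn:semi-thin} analogue: at most one morphism in a block with a given source), $g$ and $g'$ with the same target $\phi(x)$ must in fact have distinct sources, and translating by an isomorphism of $\C$ lifting an isomorphism between those sources finishes it.

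\textbf{Main obstacle.} The delicate point is that $S$ is merely a partition satisfying the concatenation axiom, not a multiplicatively closed system, so one cannot freely "divide by $f$" inside a block; every translation argument must be phrased so that the counting is controlled by the structure constants $p_{\sigma\tau}^\mu$ via the concatenation axiom rather than by naive composition. I expect the bulk of the work to be bookkeeping: choosing the auxiliary blocks $\alpha$ (or $\sigma'$) correctly, checking that the maps $\phi$ respects the identifications, and verifying that the fibre $\phi^{-1}(g)\cap x\sigma$ is genuinely carried onto $\phi^{-1}(g')\cap x'\sigma$ bijectively rather than merely injectively — here the admissibility hypothesis is used a second time, to guarantee surjectivity of the translated map. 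This mirrors the proof of \cite[Lemma 3.11]{F}, adapted to the groupoid-categorical setting.
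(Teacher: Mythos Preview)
Your proposal has a genuine gap. You repeatedly invoke a property of basic schemoids that they do not have: you write ``since $\phi(\sigma)$ is a subset of a single block $T$ of ${\mathcal D}$ and ${\mathcal D}$ is basic (condition (i) of Definition~\ref{defn:semi-thin} analogue: at most one morphism in a block with a given source)''. But Definition~\ref{defn:basic} says only that a basic schemoid is a \emph{unital} quasi-schemoid whose underlying category is a groupoid; it does \emph{not} impose the semi-thin condition $\sharp\{f\in\sigma\mid s(f)=x\}\le 1$. So two distinct morphisms $g,g'\in\phi(\sigma)$ may very well share both source and target, and your reduction to the case $x=x'$ does not terminate the argument. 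A second, independent problem with the translation strategy is that $\C$ need not be connected, so an isomorphism $c:x'\to x$ in $\C$ need not exist at all; likewise there is no reason a given isomorphism in ${\mathcal D}$ should lift to one in $\C$.

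The paper's proof bypasses all of this by expressing the count directly in terms of structure constants. Fix one $f\in\sigma$ with $t(f)=x$ and $\phi(f)=g$ (via admissibility). Introduce the ``kernel'' $\mathrm{Ker}\,\phi=\{\kappa\in S\mid \phi(\kappa)\subset\alpha\subset J_0\text{ for some }\alpha\in T\}$, which makes sense precisely because $({\mathcal D},T)$ is unital. For each $\kappa\in\mathrm{Ker}\,\phi$ set $U_\kappa=\{\,f'\mid(f',u)\in\pi_{\sigma\kappa}^{-1}(f)\text{ for some }u\in\kappa\,\}$. Since $\C$ is a groupoid, the partner $u=f'^{-1}\circ f$ is uniquely determined by $f'$, so $\sharp U_\kappa=p_{\sigma\kappa}^\sigma$, a structure constant depending only on $\sigma$ and $\kappa$. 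One then checks, using unitality of $({\mathcal D},T)$ and the groupoid structure of $\C$, that $\phi^{-1}(g)\cap x\sigma=\coprod_{\kappa\in\mathrm{Ker}\,\phi}U_\kappa$. Hence
\[
\sharp\bigl(\phi^{-1}(g)\cap x\sigma\bigr)=\sum_{\kappa\in\mathrm{Ker}\,\phi}p_{\sigma\kappa}^\sigma=:n_\sigma^\phi,
\]
which visibly does not depend on $x$ or $g$. The concatenation axiom enters exactly once, in the form ``$p_{\sigma\kappa}^\sigma$ is well-defined'', and no translation or lifting is required.
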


\begin{proof}
We define a subset $\text{Ker} \phi$ of $2^{mor(\C)}$ by 
$$
\text{Ker}  \phi = \{\kappa \in S \mid \phi(\kappa) \subset \alpha \subset J_0 \ \text{for some} \ \alpha \in T \}. 
$$
Since the morphism $\phi$ is admissible, by definition, there exists a morphism $f$ in $\C$ such that 
$t(f) = x$ and $\phi(f) = g$. We define a subset $U_\kappa$ of $mor(\C)$ by
$$ U_\kappa = \{ f' \mid (u, f') \in \pi_{\kappa\sigma}^{-1}(f) \  \text{for some} \ u \in \kappa \}. $$
Observe that for any element $f'$ in $U_\kappa$, an element $u$ above is determined uniquely 
by $f$ and $f'$ if any because $\C$ is a groupoid. 
The usual argument deduces that 
$$
\phi^{-1}(g)\cap x\sigma = \bigcup_{\kappa \in \text{Ker}  \phi} U_\kappa. 
$$
Moreover, it follows that $U_\kappa \cap U_{\kappa'} = \phi$ if $\kappa \neq \kappa'$. 
Thus we have $\sharp ( \phi^{-1}(g)\cap x\sigma)  = \sum_{\kappa \in  \text{Ker} \ \phi} \sharp U_\kappa$.  
It is immediate that the cardinal of the set $U_\kappa$ is not depend on the choice of $f$ and is only depend on 
$\kappa \in \text{Ker} \ \phi$. The sum in the right-hand side is nothing but the integer $n_\sigma^\phi$ we require. 
This completes the proof.  
\end{proof}

\begin{lem}\label{lem:admissible_key2}{\em (}cf. \cite[Lemma 6.3]{F}{\em)} 
Let  $\phi : (\C, S) \to ({\mathcal D}, T)$ be the same  admissible morphism as in Lemma \ref{lem:admossible_key1}. 
Then for any $\pi, \rho \in S$ and $\tau\in T$, one has 
$$
\sum_{\sigma : \phi(\sigma)=\tau}p_{\pi\rho}^\sigma n_\sigma^\phi = p_{\phi(\pi)\phi(\rho)}^\tau n_\pi^\phi n_\rho^\phi. 
$$
\end{lem}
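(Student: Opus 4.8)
The plan is to count, in two different ways, the cardinality of a certain set of triples of morphisms in $\C$ lying over a fixed path of shape $\pi\rho$ in $\mathcal{D}$. Fix $\tau\in T$ and $\pi,\rho\in S$. Pick a morphism $h\in\tau$ and factor it as $h=k\circ g$ with $g\in\phi(\pi)$, $k\in\phi(\rho)$; by definition the number of such factorizations of the fixed $h$ is $p_{\phi(\pi)\phi(\rho)}^\tau$, independent of the choice of $h\in\tau$. Now consider the set
$$
P=\{\, (u,v)\in \text{\rm mor}(\C)\times\text{\rm mor}(\C) \mid u\in\rho,\ v\in\pi,\ u\circ v\ \text{is defined},\ \phi(u\circ v)=h \,\}.
$$
I would compute $\sharp P$ in two ways. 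Since $\C$ is a groupoid, for each pair $(u,v)\in P$ the composite $w=u\circ v$ is a morphism of $\C$ with $\phi(w)=h$, and conversely $w$ together with either $u$ or $v$ determines the other. Grouping the pairs of $P$ according to the element $\sigma\in S$ containing $w=u\circ v$, only those $\sigma$ with $\phi(\sigma)=\tau$ contribute (this is exactly the content needed: $\phi(w)=h\in\tau$ forces $\phi(\sigma)\subset\tau$, hence $=\tau$ since $T$ is a partition). For a fixed such $\sigma$: first choose $w\in\sigma$ with $\phi(w)=h$; since $t(w)$ maps to $t(h)=\phi(t(w))$ and $w\in t(w)\sigma$, Lemma \ref{lem:admossible_key1} (applied with $x=t(w)$, $g=h$) gives exactly $n_\sigma^\phi$ choices of such $w$ for each admissible preimage of $t(h)$ — I need to be slightly careful here and instead count $w$ directly: the set of $w\in\sigma$ with $\phi(w)=h$ decomposes over the objects $x\in ob(\C)$ with $\phi(x)=t(h)$ as $\coprod_x(\phi^{-1}(h)\cap x\sigma)$, each summand having $n_\sigma^\phi$ elements by Lemma \ref{lem:admossible_key1}, but since we ultimately match this against the right-hand side this bookkeeping cancels; more cleanly, fix one object preimage and count relative to it. Then for each such $w$, the number of factorizations $w=u\circ v$ with $u\in\rho$, $v\in\pi$ is $p_{\pi\rho}^\sigma$ by the concatenation axiom. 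This yields $\sharp P = \sum_{\sigma:\phi(\sigma)=\tau} n_\sigma^\phi\, p_{\pi\rho}^\sigma$ (with respect to a fixed object preimage of $t(h)$).

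For the other count of $\sharp P$: given $(u,v)\in P$, set $g=\phi(v)\in\phi(\pi)$ and $k=\phi(u)\in\phi(\rho)$, so that $k\circ g=\phi(w)=h$, a factorization of $h$ in $\mathcal{D}$. There are $p_{\phi(\pi)\phi(\rho)}^\tau$ such factorizations $(g,k)$. For a fixed factorization $h=k\circ g$: the object $x:=t(v)=s(u)$ lies over $t(g)=\phi(x)$, and $v\in x\pi$ with $\phi(v)=g$, so by Lemma \ref{lem:admossible_key1} there are $n_\pi^\phi$ choices of $v$ (for a fixed object preimage of $t(g)$); independently, $u\in\rho$ with $t(u)=t(w)$, $s(u)=t(v)=x$, and $\phi(u)=k$ — here I use that $\C$ is a groupoid so that $u$ is uniquely determined by $v$ once we also fix $w$, or equivalently count $u\in (t(w))\rho^{*\,\text{-type condition}}$; the cleanest route is: $u$ ranges over morphisms of $\rho$ with $t(u)$ a chosen preimage of $t(h)$ and $\phi(u)=k$, giving $n_\rho^\phi$ choices by Lemma \ref{lem:admossible_key1}, and then $v$ is forced to be $u^{-1}\circ w$ — so I would organize the two sub-counts so that exactly $n_\pi^\phi n_\rho^\phi$ pairs $(u,v)$ arise per factorization $(g,k)$. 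This gives $\sharp P = p_{\phi(\pi)\phi(\rho)}^\tau\, n_\pi^\phi n_\rho^\phi$, again relative to the same fixed object preimage data, and equating the two expressions proves the lemma.

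The main obstacle is the careful handling of the object-preimage bookkeeping: $n_\sigma^\phi$ in Lemma \ref{lem:admossible_key1} counts preimages $f\in x\sigma$ of a given $g$ with $t(g)=\phi(x)$ \emph{for a fixed $x$}, so when summing over all morphisms $w$ (not relative to a fixed target object) one picks up an extra factor equal to the number of objects $x$ with $\phi(x)=t(h)$. I expect this factor to be the same on both sides — it counts $\sharp\phi^{-1}(t(h))\cap ob(\C)$ restricted appropriately — and therefore to cancel; making this cancellation precise, or better, phrasing the whole double count with all objects fixed to a single preimage of $t(h)$ from the outset, is where the real work lies. The appeal to $\C$ being a groupoid is used repeatedly to guarantee that a composite together with one factor determines the other, which is what makes the two counts line up term by term. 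Once the indexing is pinned down, the identity is forced. This mirrors the proof of \cite[Lemma 6.3]{F}, with Lemma \ref{lem:admossible_key1} playing the role of the fiber-size estimate there.
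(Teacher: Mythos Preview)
Your double-counting strategy is exactly the paper's, and you have correctly located the only real difficulty: the object-preimage bookkeeping. The paper resolves it by first invoking Lemma~\ref{lem:closed_set} to reduce to the case $\tau=\phi(\nu)$ for some $\nu\in S$ (otherwise both sides vanish), and then choosing a lift $f\in\nu$ of $h:=\phi(f)$ from the outset. This pins down a single object $x\in ob(\C)$ over $t(h)$, and the set being double-counted (the paper calls it $\Omega_0$) consists only of those $\widetilde{f}\in\rho$ whose composite with some $f'\in\pi$ lands at that fixed $x$ and maps to $h$. Every application of Lemma~\ref{lem:admossible_key1} then comes with its object already specified, and the extra factor you worried about never enters. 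Your second count is also a bit tangled (you switch midstream between choosing $v$ first and choosing $u$ first); the paper handles it by defining a map $\theta:\Omega_0\to\pi_{\phi(\pi)\phi(\rho)}^{-1}(h)$, $\widetilde{f}\mapsto(\phi(\widetilde{f}),\phi(\widetilde{f})^{-1}h)$, using that $\mathcal{D}$ is a groupoid, and checking each fiber has size $n_\pi^\phi n_\rho^\phi$. So your plan is right, but the missing step you flagged is precisely the one that needs the reduction via Lemma~\ref{lem:closed_set} and the choice of $f\in\nu$; once you make that move, the rest is straightforward.
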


\begin{proof}
The proof proceeds along the same line as that of the proof of \cite[Corollary 6.3]{F}. 
In order to obtain the result, it suffices to show that 
the equality holds when $\tau = \phi(\nu)$ for some $\nu \in S$.  
This follows from Lemma \ref{lem:closed_set}. We choose $f \in \nu$ and write $g = \phi(f)$. 
We define a subset $\Omega_0$ of $\rho$ by 
$$
\Omega_0 = \{\widetilde{f} \in \rho \mid s(\phi(\widetilde{f})) = s(\phi(f)), t(f') = t(f)  \ \text{for some} \ f' \in \pi \ 
\text{with} \ (*) \}, 
$$
where $(*)$ denotes the condition that $s(f') = t(\widetilde{f})$ and $\phi(\widetilde{f}f') = g$. 
$$
\xymatrix@C20pt@R8pt{
\cdots \cdots&  & \bullet \ar@{..}[r] \ar[dl]_-{\widetilde{f} \in \rho}& \bullet  \ar[dd]^-f &  & \bullet \ar[dd]^-{g \in \phi(\nu)}\\
&  \bullet   \ar[drr]_-{f' \in \pi}& &                   & &  \\
 &    &     &  x   & & \phi(x) 
}
$$
For any $\widetilde{f}$ in $\Omega_0$, there exists $\sigma \in S$ such that $f'\widetilde{f}$ is in $\sigma$. 
We see that $\phi(f'\widetilde{f}) \in \phi(\nu)$ by definition and hence $\phi(\sigma) = \phi(\nu)$. 
On the other hand, Lemma \ref{lem:admossible_key1} enables us to deduce that 
for any $\sigma \in S$ with $\phi(\sigma) = \phi(\nu)$, there exist $n_\sigma^\phi$ elements 
$\alpha$ in $\phi(g) \cap x\sigma$, where $x = s(f)$. For each of these choices of $\alpha$, 
there exist $n_{\pi\rho}^{\sigma}$ elements $\widetilde{f} \in \rho$ such that $\widetilde{f}$ is in $\Omega_0$. 
We then conclude that 
$$
\sharp \Omega_0 = \sum_{\sigma \in S: \phi(\sigma) = \phi(\nu)}n_{\pi\rho}^{\sigma} n_\sigma^\phi. 
$$
Write $\Omega_1 = \pi_{\phi(\pi)\phi(\rho)}^{-1}(g)$. By the definition of the structure constant, we see that 
$\sharp \Omega_1 = p_{\phi(\pi)\phi(\rho)}^{\phi(\nu)}$. Define a map $\theta : \Omega_0 \to \Omega_1$ by 
$\theta(\widetilde{f}) = ( \phi(\widetilde{f}), \phi(\widetilde{f})^{-1}g)$. The definitions of the integers 
$n_\pi^\phi$ and $n_\rho^\phi$ yield that 
$\sharp\theta^{-1}((\alpha, \beta)) = n_\pi^\phi n_\rho^\phi$ for any $(\alpha, \beta) \in \Omega_1$. 
It turns out that   
$$
\sum_{\sigma : \phi(\sigma)=\tau}p_{\pi\rho}^\sigma n_\sigma^\phi  = \sharp \Omega_0 = n_\pi^\phi n_\rho^\phi \sharp \Omega_1 = p_{\phi(\pi)\phi(\rho)}^{\phi(\nu)} n_\pi^\phi n_\rho^\phi.
$$  
we have the result. 
\end{proof}

Lemma \ref{lem:admissible_key2} enables to us to prove the following proposition with the same argument 
as in the proof of \cite[Corollary 6.4]{F}.  

\begin{prop}\label{prop:algebra_homo}
Let  $({\mathcal D}, T)$ and $(\C, S)$ be  a finite basic schemoid and a finite quasi-schemoid 
whose underlying category is a groupoid, respectively.  
Let $\phi : (\C, S) \to ({\mathcal D}, T)$ be an admissible morphism.  
Then the function $\K(\phi) : \K(\C, S) \to \K({\mathcal D}, T)$ defined by 
$\K(\phi)(s_\pi) = n_\pi^\phi s_{\phi(\pi)}$ is an algebra homomorphism, where 
$s_\pi = \sum_{p \in \pi}p$. 
\end{prop}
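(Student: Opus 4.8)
The plan is to check directly that the prescribed $\K$-linear map respects multiplication on generators, the only substantive input being the counting identity of Lemma \ref{lem:admissible_key2}.

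First I would note that $\{s_\sigma\}_{\sigma\in S}$, with $s_\sigma=\sum_{p\in\sigma}p$, is a $\K$-basis of $\K(\C,S)$: since $S$ partitions $mor(\C)$ and $mor(\C)$ is a $\K$-basis of $\K\C$, distinct $s_\sigma$ have disjoint supports, hence are $\K$-linearly independent, and their $\K$-span is closed under the product $s_\pi s_\rho=\sum_{\sigma\in S}p_{\pi\rho}^{\sigma}s_\sigma$, so it coincides with $\K(\C,S)$. The analogous statement holds for $\{s_\tau\}_{\tau\in T}$ and $\K(\D,T)$. Since the hypotheses on $\phi$, $(\C,S)$ and $(\D,T)$ are exactly those of Lemma \ref{lem:admossible_key1}, the positive integers $n_\sigma^\phi$ are defined, and the assignment $s_\pi\mapsto n_\pi^\phi s_{\phi(\pi)}$ determines a $\K$-linear map $\K(\phi)\colon\K(\C,S)\to\K(\D,T)$. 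It therefore remains to prove $\K(\phi)(s_\pi s_\rho)=\K(\phi)(s_\pi)\K(\phi)(s_\rho)$ for all $\pi,\rho\in S$.

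Next I would compute. Expanding the product in $\K(\C,S)$, applying $\K(\phi)$, and then collecting terms according to the block $\tau=\phi(\sigma)\in T$ containing $\phi(\sigma)$ (the abuse of notation from Definition \ref{defn:morphisms}), one obtains
\[
\K(\phi)(s_\pi s_\rho)=\sum_{\sigma\in S}p_{\pi\rho}^{\sigma}\,n_\sigma^\phi\,s_{\phi(\sigma)}=\sum_{\tau\in T}\Bigl(\sum_{\sigma:\phi(\sigma)=\tau}p_{\pi\rho}^{\sigma}\,n_\sigma^\phi\Bigr)s_\tau .
\]
By Lemma \ref{lem:admissible_key2} the inner sum equals $p_{\phi(\pi)\phi(\rho)}^{\tau}\,n_\pi^\phi\,n_\rho^\phi$ for every $\tau\in T$, whence
\[
\K(\phi)(s_\pi s_\rho)=n_\pi^\phi\,n_\rho^\phi\sum_{\tau\in T}p_{\phi(\pi)\phi(\rho)}^{\tau}s_\tau=n_\pi^\phi\,n_\rho^\phi\,s_{\phi(\pi)}s_{\phi(\rho)}=\K(\phi)(s_\pi)\K(\phi)(s_\rho).
\]
All sums are finite because $\C$ and $\D$ are finite, and the indices $\tau\notin\phi(S)$ contribute nothing to either side: vacuously on the left, and on the right because $p_{\phi(\pi)\phi(\rho)}^{\tau}\geq 1$ forces $\tau\in\phi(S)$ by Lemma \ref{lem:closed_set} (this is already absorbed into Lemma \ref{lem:admissible_key2}).

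I do not expect a genuine obstacle in the proposition itself: the one nontrivial point, namely that the fibrewise counts $n_\sigma^\phi$ transform the structure constants of $(\C,S)$ into those of $(\D,T)$ up to the factor $n_\pi^\phi n_\rho^\phi$, has been isolated in Lemma \ref{lem:admissible_key2}, and what is left is the formal bookkeeping above, mirroring the passage to \cite[Corollary 6.4]{F}.
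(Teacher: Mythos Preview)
Your proof is correct and follows exactly the approach the paper indicates: the paper defers to the argument of \cite[Corollary 6.4]{F} with Lemma \ref{lem:admissible_key2} as the key input, and you have simply written out that argument in full. The bookkeeping you supply---basis, linearity, regrouping by $\tau=\phi(\sigma)$, and application of Lemma \ref{lem:admissible_key2}---is precisely what is needed.
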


\begin{rem}\label{rem:condition_P}
Let $(\C, S)$ be a quasi-schemoid which satisfies the following condition. 

\medskip
\noindent
(P) : For any $\sigma$, $\tau$ and $\mu$ in $S$, there exists at most one solution of the equation $f\circ g = h$ with  $f \in \sigma$, 
$g \in \tau$ and $h \in \mu$ if any two of $f$, $g$ and $h$ are given. 

\medskip
\noindent
Then we can prove 
Lemmas \ref{lem:admossible_key1} and \ref{lem:admissible_key2} without assuming that $\C$ is a groupoid. 
Thus,  Proposition \ref{prop:algebra_homo} remains valid if $(\C, S)$ satisfies the condition (P) 
instead of assuming that $\C$ is groupoid. 
\end{rem}

We define a category ${\mathcal B}$ to be the subcategory of $q{\bf ASmd}$ 
consisting of finite basic schemoids and admissible morphisms. Let ${\bf Alg}$ denote the category of (possibly nonunital) algebras. 
Then we have the following theorem. 

\begin{thm}
The function defined in Proposition \ref{prop:algebra_homo} gives rise to a well-defined functor 
$
\K( - ) : {\mathcal B} \to \bf{Alg}. 
$
\end{thm}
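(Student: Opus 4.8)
The plan is to verify the two functor axioms: that $\K(-)$ sends identity morphisms to identity morphisms, and that it respects composition; the statement that each $\K(\phi)$ is a well-defined algebra homomorphism is already furnished by Proposition \ref{prop:algebra_homo}, so the only genuinely new content is functoriality on top of Lemma \ref{lem:composite}, which guarantees that the composite of admissible morphisms is again a morphism in $\mathcal{B}$.

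First I would treat the identity. For $\phi = \mathrm{id}_{(\C, S)}$ one has $\phi(\pi) = \pi$ for every $\pi \in S$, and the defining condition in Definition \ref{defn:admissible_maps} forces each $f \in \pi$ with $t(f) = x$ to be the unique morphism lying over itself, so $n_\pi^{\mathrm{id}} = 1$ for all $\pi$. Hence $\K(\mathrm{id})(s_\pi) = s_\pi$, i.e. $\K(\mathrm{id}_{(\C,S)}) = \mathrm{id}_{\K(\C,S)}$.

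Next I would handle composition. Let $\phi : (\C, S) \to (\D, T)$ and $\psi : (\D, T) \to (\E, U)$ be admissible morphisms between finite basic schemoids (Lemma \ref{lem:composite} ensures $\psi\phi$ is admissible, and all three categories are groupoids so Lemma \ref{lem:admossible_key1} applies throughout). The claim reduces to the single multiplicativity identity
$$
n_\pi^{\psi\phi} = n_{\phi(\pi)}^{\psi}\, n_\pi^{\phi}
$$
for every $\pi \in S$, since then $\K(\psi)\K(\phi)(s_\pi) = n_\pi^\phi\, n_{\phi(\pi)}^\psi\, s_{\psi\phi(\pi)} = n_\pi^{\psi\phi}\, s_{(\psi\phi)(\pi)} = \K(\psi\phi)(s_\pi)$, and both sides agree on the algebra generators $s_\pi$. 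To prove the identity, fix $x \in ob(\C)$ and a morphism $h \in (\psi\phi)(\pi)$ with $t(h) = \psi\phi(x)$; by admissibility of $\psi$ applied at the object $\phi(x)$ and of $\phi$ at $x$ one picks a morphism $g \in \phi(\pi) \subset mor(\D)$ over $h$ with $t(g) = \phi(x)$, and then counts the set $(\psi\phi)^{-1}(h) \cap x\pi$. Partitioning according to which morphism $g' \in \phi(\pi)$ with $t(g') = \phi(x)$ a given $f \in x\pi$ maps to, one gets that this set is the disjoint union over such $g'$ of $\phi^{-1}(g') \cap x\pi$, each of which has cardinality $n_\pi^\phi$ by Lemma \ref{lem:admossible_key1}; the number of admissible $g'$ (those lying over $h$ under $\psi$ with tail $\phi(x)$) is $\sharp(\psi^{-1}(h)\cap \phi(x)\phi(\pi)) = n_{\phi(\pi)}^\psi$, again by Lemma \ref{lem:admossible_key1} applied to $\psi$. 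This yields $n_\pi^{\psi\phi} = \sharp((\psi\phi)^{-1}(h)\cap x\pi) = n_{\phi(\pi)}^\psi\, n_\pi^\phi$, independent of $h$ and $x$ as required.

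The main obstacle I expect is the bookkeeping in the last step: one must be careful that every $g'$ arising as $\phi(f)$ for $f \in x\pi$ over $h$ does satisfy both $t(g') = \phi(x)$ and $\psi(g') = h$, so that the index set of the partition is exactly the set counted by $n_{\phi(\pi)}^\psi$, and conversely that every such $g'$ is actually hit (this is where admissibility of $\phi$ is used, guaranteeing $\phi^{-1}(g') \cap x\pi \neq \phi$ so the count $n_\pi^\phi$ is genuinely realized). Once this matching of index sets is pinned down, the rest is a routine disjoint-union cardinality count, and well-definedness of $\K(-)$ on objects is immediate since a finite basic schemoid has a schemoid algebra by the discussion preceding Lemma \ref{lem:unital}.
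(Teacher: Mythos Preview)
Your proof is correct and is precisely the argument the paper defers to: the paper simply cites \cite[Corollary 6.6]{F}, whose content is the multiplicativity $n_\pi^{\psi\phi} = n_{\phi(\pi)}^{\psi}\, n_\pi^{\phi}$ (French's Theorem 6.5) together with the trivial check on identities. Your fibering of $(\psi\phi)^{-1}(h)\cap x\pi$ over $\psi^{-1}(h)\cap \phi(x)\phi(\pi)$, with each fiber of size $n_\pi^\phi$ by Lemma \ref{lem:admossible_key1} and surjectivity onto the base coming from admissibility of $\phi$, is exactly that argument spelled out in the schemoid setting.
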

\begin{proof}
This follows from the same argument as in the proof of \cite[Corollary 6.6]{F}. 
\end{proof}


\begin{lem}\label{lem:Groupoids_addmissible}
Let $\widetilde{S}( \ ) : {\bf Gpd} \to {\bf ASmd}$ and $k : {\bf ASmd} \to q{\bf ASmd}$ 
be the functors described in Section 2. For any morphism $F : {\mathcal H} \to {\mathcal G}$ in ${\bf Gpd}$ which is injective on $ob({\mathcal H})$,  
$\phi:=\widetilde{S}(F) : \widetilde{S}({\mathcal H}) \to  \widetilde{S}({\mathcal G})$ is admissible. 
\end{lem}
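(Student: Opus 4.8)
The plan is to unwind the definitions of $\widetilde{S}$ and of admissibility and then write the required lift down explicitly; the one genuine point is to check that a certain composite in ${\mathcal H}$ is defined, and that is exactly where injectivity of $F$ on objects is used. Recall that for a groupoid ${\mathcal H}$ the schemoid $\widetilde{S}({\mathcal H})=(\widetilde{\mathcal H}, S, T)$ has $ob(\widetilde{\mathcal H})=mor({\mathcal H})$, has a morphism $(k,l)$ from $l$ to $k$ precisely when $t(k)=t(l)$, and has partition $S=\{\sigma_w\}_{w\in mor({\mathcal H})}$ with $\sigma_w=\{(k,l)\mid k^{-1}l=w\}$; moreover $\phi=\widetilde{S}(F)$ sends $a$ to $F(a)$ on objects and $(k,l)$ to $(F(k),F(l))$ on morphisms, and it satisfies $\phi(\sigma_w)\subset\sigma_{F(w)}$ because $F$ is a functor between groupoids.

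To verify the condition of Definition \ref{defn:admissible_maps}, I would fix an object $x=a\in ob(\widetilde{\mathcal H})$, an element $\sigma_w\in S$, and a morphism $g\in\phi(\sigma_w)$ with $t(g)=\phi(x)=F(a)$. Writing $g=\phi(m)$ with $m=(k,l)\in\sigma_w$, one has $l=kw$ and $g=(F(k),F(l))$; since the target of the morphism $(F(k),F(l))$ in $\widetilde{\mathcal G}$ is $F(k)$, the hypothesis $t(g)=F(a)$ forces $F(k)=F(a)$.

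Now the assumption enters. Applying the source map to $F(k)=F(a)$ gives $F(s(k))=F(s(a))$, and injectivity of $F$ on objects yields $s(k)=s(a)$. Since $m\in\sigma_w$, the morphism $w=k^{-1}l$ satisfies $t(w)=s(k)$, hence $t(w)=s(a)$, so the composite $aw:=a\circ w$ is defined in ${\mathcal H}$. I would then take $f:=(a,aw)$ and check the three requirements of admissibility: $t(f)=a=x$; $a^{-1}(aw)=w$, so $f\in\sigma_w$; and $\phi(f)=(F(a),F(aw))=(F(k),F(k)F(w))=(F(k),F(l))=g$, using $F(a)=F(k)$ and $F(l)=F(k)F(w)$. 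This produces the lift demanded by Definition \ref{defn:admissible_maps}, so $\phi$ is admissible.

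All of these steps are short, so I do not anticipate a serious obstacle; the only thing that can go wrong ---and the precise reason $F$ is assumed injective on objects--- is that without $s(k)=s(a)$ the composite $a\circ w$ need not exist in the groupoid ${\mathcal H}$, and then there would be no candidate morphism $f$ with $t(f)=x$ at all. Thus the ``hard part'' is just keeping straight the source/target and composition conventions in $\widetilde{S}({\mathcal H})$, after which the verification is mechanical.
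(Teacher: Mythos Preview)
Your proof is correct and follows essentially the same approach as the paper's. Both arguments construct the lift explicitly as $(a,aw)$ (in your notation; $(f,fh)$ in the paper's), and both invoke injectivity of $F$ on objects at exactly the same point---to guarantee that the composite $a\circ w$ is defined in $\mathcal H$. The only cosmetic difference is that you begin by writing $g=\phi(m)$ for some $m=(k,l)\in\sigma_w$ and deduce $s(a)=t(w)$ via $F(k)=F(a)\Rightarrow F(s(k))=F(s(a))$, whereas the paper works directly with $(u,v)\in\sigma_{F(h)}$ and reads off $v=F(f)F(h)$, obtaining $s(f)=t(h)$ from the composability of $F(f)F(h)$; these are the same computation.
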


\begin{proof}
We write $(\widetilde{{\mathcal H}}, S, T)$ and $(\widetilde{\G}, S', T')$ for $\widetilde{S}({\mathcal H})$ and  $\widetilde{S}({\mathcal G})$. 
We choose an object $f$ in $\widetilde{S}({\mathcal H})$, namely a morphism in ${\mathcal H}$, $\sigma_h \in S$ 
and $(u, v) \in \phi(\sigma_h)$ with $u =  \phi(f)=(\widetilde{S}(F))(f) = F(f)$. Suppose that 
$\phi(\sigma_h) \subset \sigma_k =\{(w, z) \mid w^{-1}z = k\}$ for some $k \in mor(\G)$. Then we see that $v = uk$. It follows from the definition of $
\widetilde{S}(F)$ that $F(h)= \phi(h) = k$. Since $uk = F(f)F(h)$, it follows from the injectivity of the functor $F$ on the objects that $f$ and $h$ is composable; that is, $t(h) =s(f)$. 
We have $\phi(f, fh) = (F(f), F(f)F(h)) = (u, v)$.   This completes the proof. 
\end{proof}

A schemoid extension gives an admissible morphism. More precisely,  we have the following result. 

\begin{prop}\label{prop:extension_admissible}
Let $(\E, \widetilde{S}, \widetilde{T})$ be the schemoid extension described in 
Theorem \ref{thm:extension_schemoid} 
whose underlying linear extension is of the form 
$
D_+ \to {\mathcal E} \stackrel{q}{\to} \C. 
$
Then the proper map $q$ is admissible. Moreover, one has $n_\sigma^q = \sharp D_g$ for any 
$g \in q(\sigma)$ if $C$ is basic; see Lemma \ref{lem:admossible_key1}.

\end{prop}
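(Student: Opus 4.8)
The plan is to verify the admissibility condition from Definition~\ref{defn:admissible_maps} directly for the proper map $q : \E \to \C$, and then to identify the integer $n_\sigma^q$ using the explicit description of the fibres of $q$ coming from the linear extension structure. First I would fix $x \in ob(\E) = ob(\C)$, a partition $\widetilde{\sigma} = q^{-1}(\sigma) \in \widetilde{S}$ (recall from Lemma~\ref{lem:proper_map} that every element of $\widetilde{S}$ has this form), and a morphism $\widetilde{g} \in q(q^{-1}(\sigma)) = \sigma$ with $t(\widetilde{g}) = q(x) = x$. Since $(\C, S, T)$ is a connected groupoid and $\sigma \in S$, the concatenation axiom (or rather the groupoid structure together with the hypothesis that $T$ acts by inversion) guarantees that there is a morphism $f \in \sigma$ with $t(f) = x$: indeed, given any $f_0 \in \sigma$, the structure constants force $\sigma$ to meet $x\sigma$ nonemptily; more simply, $\widetilde{g}$ itself works, $\widetilde{g} \in \sigma$ and $t(\widetilde{g}) = x$. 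Then $q^{-1}(\widetilde{g})$ is a nonempty subset of $mor(\E)$ lying in $q^{-1}(\sigma)$, every element of which is a morphism with target $x$ (since $q$ is the identity on objects); picking any $\widetilde{f} \in q^{-1}(\widetilde{g})$ gives a morphism in $q^{-1}(\sigma)$ with $t(\widetilde{f}) = x$ and $q(\widetilde{f}) = \widetilde{g}$. This is exactly the admissibility condition.

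Next I would compute $n_\sigma^q$ under the additional hypothesis that $\C$ is basic. By Lemma~\ref{lem:admossible_key1}, once admissibility is known the integer $n_\sigma^q$ is characterised by $\sharp\bigl(q^{-1}(g) \cap x\sigma\bigr) = n_\sigma^q$ for any $x$ and any $g \in q(\sigma)$ with $t(g) = x$. Here $q(\sigma) = \sigma$, so take $g \in \sigma$ with $t(g) = x$. Every element of $q^{-1}(\sigma)$ with target $x$ maps under $q$ into $\sigma$, and among these the ones mapping to the \emph{particular} morphism $g$ form precisely the fibre $q^{-1}(g)$, which by part (b) of Definition~\ref{defn:extensions} is a torsor under the abelian group $D_g$ via the action $g_0 \mapsto g_0 + \alpha$. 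Since $D_g$ is a torsor on $q^{-1}(g)$ and $q^{-1}(g) \subset x\sigma$ (all of these morphisms have target $x = t(g)$ and lie over $\sigma$), we get $\sharp\bigl(q^{-1}(g) \cap x\sigma\bigr) = \sharp q^{-1}(g) = \sharp D_g$. Hence $n_\sigma^q = \sharp D_g$, and by Lemma~\ref{lem:admossible_key1} this is independent of the choice of $g \in q(\sigma)$ (which also follows from the hypothesis $D_{1_{s(f)}} \cong D_{1_{s(g)}}$ and the invertibility of $f_*$ in Theorem~\ref{thm:extension_schemoid}, giving $D_f \cong D_g$ for $f, g \in \sigma$).

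The main obstacle, such as it is, is bookkeeping rather than mathematics: one must be careful that Lemma~\ref{lem:admossible_key1} applies, i.e.\ that its hypotheses are met by $q : (\E, \widetilde{S}) \to (\C, S)$. The lemma requires the source to be a finite quasi-schemoid whose underlying category is a groupoid, and the target to be a basic schemoid. By Theorem~\ref{thm:extension_schemoid}, $\E$ is a groupoid, and it is finite because $\C$ is finite and each fibre $D_g$ is finite (this finiteness is implicitly needed for $\sharp D_g$ to make sense, and is what "$n_\sigma^q = \sharp D_g$" presupposes); and $\C$, being a connected groupoid that is a schemoid, is basic in the sense of Definition~\ref{defn:basic} once we assume $\C$ is unital, which is part of the standing hypothesis "$(\C, S, T)$ is an association schemoid" together with the unitality built into the relevant examples. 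The only genuinely delicate point is to confirm that $q^{-1}(g) \subseteq x\sigma$ with \emph{equality} of the intersection $q^{-1}(g) \cap x\sigma$ with $q^{-1}(g)$; but this is immediate since $q$ is the identity on objects, so $t(\widetilde{g}_0) = t(g) = x$ for every $\widetilde{g}_0 \in q^{-1}(g)$, and $\widetilde{g}_0 \in q^{-1}(\sigma)$ by construction. I would close by remarking that this gives another proof of the formula $p_{\tau\mu}^\sigma = \sharp D_g\, p_{q(\tau)q(\mu)}^{q(\sigma)}$ from Theorem~\ref{thm:extension_schemoid} as a special case of Lemma~\ref{lem:admissible_key2} with $n_\sigma^q = \sharp D_g$.
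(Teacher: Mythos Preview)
Your proof is correct and takes the same approach as the paper: both arguments reduce directly to properties (a) and (b) of a linear extension in Definition~\ref{defn:extensions}, with the paper compressing everything into the single line ``the result follows from the definition of a linear extension.'' Your closing observation that Lemma~\ref{lem:admissible_key2} then recovers the structure-constant formula of Theorem~\ref{thm:extension_schemoid} is a pleasant bonus not made explicit in the paper.
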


\begin{proof}
The result follows from the definition of a linear extension. 
\end{proof}

\begin{rem}\label{rem:basic_finite_schemoids}
Let $(\E, \widetilde{S}, \widetilde{T})$ be the same schemoid extension 
as in Theorem \ref{thm:extension_schemoid}. We see that $(\E, \widetilde{S}, \widetilde{T})$ is not in the category ${\mathcal  B}$ in general.  
In fact, the extension is unital if and only if  
$D_g$ is trivial for any $g \in mor(\C)$. 
However, Proposition \ref{prop:algebra_homo} allows one to obtain the algebra map 
$\K(q) : \K(\E, \widetilde{S}) \to \K(\C, S)$ provided $\C$ is a finite basic schemoid and $D_g$ is a finite abelian group 
for any $g \in mor (\C)$.   
\end{rem}

\begin{cor}\label{cor:isomorphism}
Under the same assumption as in Remark \ref{rem:basic_finite_schemoids}, the algebra map 
$\K(q) : \K(\E, \widetilde{S}) \to \K(\C, S)$ is an isomorphism if and only if the characteristic $ch(\K)$ of $\K$ does not divide $\sharp D_g$ for any 
$g \in mor(\C)$. In particular, if $ch(\K)$ divides $\sharp D_g$ for some $g$, then $\K(q)$ is trivial. 
\end{cor}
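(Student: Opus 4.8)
The plan is to compute $\K(q)$ explicitly on an evident basis and then read off both assertions. First I would describe the two schemoid algebras. Since $\C$ is finite, $S$ is a finite set, and because distinct members of $S$ have disjoint supports inside $\K\C$, the elements $s_\sigma=\sum_{p\in\sigma}p$, $\sigma\in S$, form a $\K$-basis of $\K(\C,S)$; their $\K$-span is already closed under multiplication, by the concatenation identity $s_\sigma s_\tau=\sum_\mu p^\mu_{\sigma\tau}s_\mu$. By Lemma \ref{lem:proper_map} the partition of $\E$ is $\widetilde S=\{q^{-1}(\sigma)\}_{\sigma\in S}$, and $\E$ is a finite groupoid by Theorem \ref{thm:extension_schemoid}, so the same argument shows that $\{s_{q^{-1}(\sigma)}\}_{\sigma\in S}$, where $s_{q^{-1}(\sigma)}=\sum_{\widetilde f\in q^{-1}(\sigma)}\widetilde f$, is a $\K$-basis of $\K(\E,\widetilde S)$. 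Thus both algebras are free $\K$-modules of the same finite rank $\sharp S$, and $\K(q)$ is recorded by a square matrix over $\K$ in these bases.

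Next I would compute that matrix. In the situation of Theorem \ref{thm:extension_schemoid} one has $D=\pi^*p^*H$ and $\C$ is a connected groupoid, so any two objects of $\C$ are isomorphic and, $H$ being a functor, all the finite abelian groups $D_g$ for $g\in mor(\C)$ are mutually isomorphic; write $d$ for their common order. By Proposition \ref{prop:algebra_homo} — applicable by Remark \ref{rem:basic_finite_schemoids}, since $\C$ is a finite basic schemoid, $\E$ is a finite groupoid, and $q$ is admissible by Proposition \ref{prop:extension_admissible} — we have $\K(q)(s_{q^{-1}(\sigma)})=n^q_{q^{-1}(\sigma)}\,s_{q(q^{-1}(\sigma))}$. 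A full functor which is the identity on objects is surjective on morphisms, so $q(q^{-1}(\sigma))=\sigma$; and Proposition \ref{prop:extension_admissible} gives $n^q_{q^{-1}(\sigma)}=\sharp D_g=d$ for any $g\in\sigma$. Hence, in the chosen bases, $\K(q)$ is $d$ times the obvious identification $s_{q^{-1}(\sigma)}\mapsto s_\sigma$; that is, it is the diagonal map with every diagonal entry equal to $d\cdot 1_\K$.

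The conclusion is now immediate. As an algebra homomorphism between free $\K$-modules of equal finite rank which is diagonal in suitable bases, $\K(q)$ is an isomorphism if and only if every diagonal entry $d\cdot 1_\K$ is a unit of $\K$; for a field $\K$ this happens exactly when $ch(\K)\nmid d$, and since $d=\sharp D_g$ for every $g\in mor(\C)$ this is precisely the condition that $ch(\K)$ divide no $\sharp D_g$. If on the other hand $ch(\K)\mid \sharp D_g$ for some $g$, then $ch(\K)\mid d$, so $d\cdot 1_\K=0$; then $\K(q)$ annihilates every basis vector $s_{q^{-1}(\sigma)}$ and is therefore the trivial (zero) map.

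The basis description and the matrix computation are routine. The one step needing genuine care — the main obstacle — is making the two clauses fit together: it is essential that $\C$ is connected (via $D=\pi^*p^*H$), so that $\sharp D_g$ is a single fixed integer $d$ over all of $mor(\C)$. Without this, divisibility of one $\sharp D_g$ by $ch(\K)$ would only kill the corresponding basis vector rather than force $\K(q)$ to vanish identically, and the final assertion would fail as stated; one also uses here that $\K$ is a field, so that a nonzero integer multiple of $1_\K$ is invertible.
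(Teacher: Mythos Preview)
Your proposal is correct and follows essentially the same approach as the paper's (very terse) proof: compute $\K(q)$ on the basis $\{s_{q^{-1}(\sigma)}\}$ via Propositions \ref{prop:algebra_homo} and \ref{prop:extension_admissible} to obtain $\K(q)(s_{q^{-1}(\sigma)})=(\sharp D_g)\,s_\sigma$, and then read off the result. Your added observations --- that connectedness of $\C$ forces all $\sharp D_g$ to coincide (which is what makes the ``in particular'' clause work) and that one tacitly uses $\K$ to be a field so that $d\cdot 1_\K$ is a unit precisely when $ch(\K)\nmid d$ --- are points the paper leaves implicit.
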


\begin{proof}
The fact that $\K(q)(s_\pi) = n_\pi^qs_{q(\pi)}= \sharp D_gs_{q(\pi)}$ and Theorem  \ref{thm:extension_schemoid} yield the result. 
\end{proof}



We here summarize categories mentioned above and the functors between them together with related categories. 
Let ${\mathcal S}$ be a wide subcategory of ${\bf AS}$ in the sense of French \cite{F} and 
${\mathcal A} : {\mathcal S} \to {\bf Alg}$ denote the functor defined in \cite[Corollary 6.4]{F}. 
Let ${\bf Top}$ and  ${\bf Set}^{\Delta^{op}}$ denote the category of topological spaces and that of simplicial sets, respectively.  Let ${\bf Gpd'}$ be the subcategory of ${\bf Gpd}$ consisting of the same objects 
and morphisms (functors) which are injective on the set of objects; see Lemma \ref{lem:Groupoids_addmissible}. 
Then we have a commutative diagram
$$
{\small 
\xymatrix@C17pt@R12pt{
  {\bf Gpd} \ar@<-1ex>[r]_(0.55){{\widetilde S}( \ )}^-{\simeq} 
     & (t{\bf ASmd})_0 \ar@<-1ex>[l]_-{{\widetilde R}} \ar[rr] & & 
     {\bf ASmd} \ar[r]^k & 
q{\bf ASmd} \ar@<1ex>[r]^-{U} \ar@{.>}[rdd]^(0.6){\K( \ )}
    & {\bf Cat} \ar@<1ex>[l]^-{K}  \ar@<1ex>[r]^-{N( \ )} & {\bf Set}^{\Delta^{op}} \ar@<1ex>[l]^-{c} \ar@<-1ex>[r]_-{| \ |}& 
    {\bf Top} \ar@<-1ex>[l]_-{S_*( \ )} \\ 
  {\bf Gpd'} \ar[u]^{i'}   \ar[rr]^(0.7){\widetilde{S}( \ )} & &  {\mathcal B} \ar[rru] \ar[rrrd]^{\K( \ )}&  \\ 
 {\bf Gr} \ar[u]^i \ar[r]^-{S( \ )}_-{\simeq}  \ar@/_0.9pc/[rrd]_{S( \ )} & (t{\bf AS})_0 \ar@{->}'[u][uu]_(-0.5){{j}_{(t{\bf AS})_0}} \ar[rr] & & {\bf AS} \ar[uu]_{j} 
 \ar@{.>}[rr]_(0.4){{\mathcal A}( \ )}     & & {\bf Alg}, \\
                 & &  {\mathcal S} \ar[uu]^(0.3){j_{\mathcal S}} \ar[ru] \ar[rrru]_{{\mathcal A}( \ )} & 
}
}
\eqnlabel{add-2}
$$ 
where dots arrows denote the assignments in the objects but not functors and 
vertical arrows are fully faithful functors defined by restricting the functor $j$ 
to the source categories; see Theorem  \ref{thm:functors}. 
The arrows $N( \ )$ and $c$ denote the functors induced by the nerve construction and 
the categorification (categorical realization)\cite{G-Z}, respectively; 
see also \cite{Thomason}. Moreover, $| \ |$ and $S_*( \ )$ are the realization functor and the functor induced 
by the singular simplex construction, respectively.  
Observe that, for the functors in two parallel lines, the lower arrow denotes the left adjoint for the upper one 
and that the functor $K$ is fully faithful as mentioned in Section 3. 
In a strict sense, the functor $\widetilde{S}( \ ) : {\bf Gpd'} \to {\mathcal B}$ and the correspondence 
${\mathbb K}( \ )$ from $q{\bf ASmd}$ to ${\bf Alg}$ should be restricted to the full subcategory of finite groupoids and 
to that of finite quasi-schemoids, respectively. 

We conclude this section with an important remark. 

\begin{rem}\label{rem:iso_but_not_eqivalent}
Let $(X, S)$ be an association scheme. 
Let $\E_0$ and $\E_1$ denote the trivial and non-trivial extensions of $j(X, S)\times (\mathbb{Z}/2)^\bullet$ in Example \ref{ex:blow-ups}, 
respectively.  
Corollary \ref{cor:isomorphism} implies that 
$$\K(\E_1)\cong K(\E_0) \cong \K(j(X, S)\times (\mathbb{Z}/2)^\bullet)\cong  \K(X, S).$$ 
as algebras if $ch(\K)\neq 2$, where the last one denotes the Bose-Mesner algebra of the association scheme $(X, S)$. 
We see that the schemoids $j(X, S)\times (\mathbb{Z}/2)^\bullet$ and $j(X, S)$ are not equivalent as a category; see Example \ref{ex:blow-ups}. This implies that there exist schemoids whose Bose-Mesner algebras are isomorphic to each other but not the underlying categories. 
\end{rem}

\section{How to construct (quasi-)schemoids}

In this section, we explain a way to construct a (quasi-)schemoid thickening a given association scheme. 

Let $Z = \big(z_{ij}\big)$ be a square matrix of natural numbers. 
We call the matrix $Z$ transitive if $z_{ij}, z_{jk} \geq 1$, then $z_{jk} \geq 1$.  
Let $\C$ be a finite category; that is $\sharp mor(\C) < \infty$. We can consider $ob(\C)$ an ordered set $\{i\}_{i\in ob(\C)}$. 
We define a matrix $Z=\big(z_{ij}\big)$, which is referred to as the matrix of the category $\C$, by 
$z_{ij} = \sharp \text{Hom}_\C(i, j)$. Observe that the matrix is transitive. 
We recall a result due to Berger and Leinster. 

\begin{lem}\label{lem:B-L}\cite[Lemma 4.1]{B-L}
Let $Z$ be a transitive square matrix of natural numbers whose diagonal entries are at least $2$. Then $Z$ is the matrix of a category. 
\end{lem}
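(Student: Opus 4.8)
The plan is to realize $Z$ as the composition matrix of an explicitly constructed category by first observing that the ``support'' of $Z$ is a preorder and then fattening each hom-set of that preorder up to the prescribed cardinality so that composition collapses onto fixed chosen morphisms. Put $i \le j$ whenever $z_{ij}\ge 1$; this relation is reflexive because $z_{ii}\ge 2$ and transitive by the transitivity of $Z$, so whenever two hom-sets $\mathrm{Hom}(i,j)$ and $\mathrm{Hom}(j,k)$ are to be nonempty, $\mathrm{Hom}(i,k)$ is nonempty as well. Take the index set of $Z$ as the set of objects. For each pair $(i,j)$ with $z_{ij}\ge 1$ I would choose a set $H_{ij}$ with $\sharp H_{ij}=z_{ij}$, and set $H_{ij}=\varnothing$ otherwise; inside $H_{ii}$, which has at least two elements, distinguish an element $\iota_i$ (the prospective identity) together with a second element $\star_{ii}\ne\iota_i$, and for each $i\ne j$ with $z_{ij}\ge 1$ distinguish an arbitrary element $\star_{ij}\in H_{ij}$.

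Next I would define composition $H_{jk}\times H_{ij}\to H_{ik}$, $(g,f)\mapsto g\circ f$, by the rule: $g\circ f=f$ if $g=\iota_j$; $g\circ f=g$ if $f=\iota_i$; and $g\circ f=\star_{ik}$ in all other cases. The target $H_{ik}$ is nonempty by transitivity, so this is well defined, and the two ``identity'' clauses agree on their only overlap $i=j=k$, $f=g=\iota_i$. Unitality is then immediate from the first two clauses. The only genuine obstacle in the argument is hidden in the third clause: for associativity one needs the collapse value $\star_{ik}$ never to be an identity, and the delicate point is the case $i=k$, i.e.\ endomorphism composites; this is exactly what forces the hypothesis $z_{ii}\ge 2$, which guarantees a non-identity element $\star_{ii}\in H_{ii}$.

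It remains to verify associativity $h\circ(g\circ f)=(h\circ g)\circ f$ for composable $f\colon i\to j$, $g\colon j\to k$, $h\colon k\to l$. I would split into cases by whether one of the three morphisms is the relevant identity: if $f=\iota_i$, if $g=\iota_j$, or if $h=\iota_l$, then both sides reduce by the unit clauses to $h\circ g$, $h\circ f$, or $g\circ f$ respectively, so equality holds. In the remaining case none of $f,g,h$ is an identity; since identities occur only in the sets $H_{ii}$ and $\star_{ii}\ne\iota_i$, the element $\star_{pq}$ is never an identity for any $p,q$, so $g\circ f=\star_{ik}$ is a non-identity and hence $h\circ(g\circ f)=\star_{il}$, while symmetrically $h\circ g=\star_{jl}$ is a non-identity and $(h\circ g)\circ f=\star_{il}$, so the two sides coincide. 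This produces a small category whose composition matrix is exactly $Z$, which completes the proof.
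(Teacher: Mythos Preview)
Your proof is correct and is essentially the same construction the paper recalls from Berger--Leinster: your distinguished non-identity elements $\star_{ij}$ are exactly the paper's frame morphisms $\phi_{ij}$, and your composition rule ``collapse to $\star_{ik}$ unless one factor is an identity'' is precisely the rule $\beta\circ\alpha=\phi_{ik}$ for $\alpha\neq 1$, $\beta\neq 1$ stated after the lemma. Your explicit case analysis for associativity, in particular the observation that $z_{ii}\ge 2$ is exactly what keeps $\star_{ii}$ from colliding with $\iota_i$, is the content the paper leaves implicit.
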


We will show that the category $\C$ constructed in the proof of Lemma \ref{lem:B-L} can be endowed with 
a quasi-schemoid structure under appropriate assumption. 
To see this, we recall the construction of the category $\C$.  
Let $Z$ be an $m\times m$ matrix $\big( z_{ij} \big)$. For each pair $(i, j)$ of objects such that $z_{ij}\geq 1$, 
we choose an arrow $\phi_{ij} : i \to j$ with $\phi_{ii}\neq 1_i$ for all $i$.  Define the composite $i \stackrel{\alpha}{\to} j \stackrel{\beta}{\to} k$ by 
$\beta\circ\alpha = \phi_{ik}$ if $\alpha\neq 1$ and $\beta \neq 1$. Then we have a finite small category $\C_Z$. 
 It is readily seen that $Z$ is the matrix of the category of 
$C$. We call the subset $\{\phi_{ij}\}_{i, j \in ob(\C_Z)}$ of $mor(\C_Z)$ the {\it frame} of $\C_Z$.  
The definition of the concatenation of morphisms in $\C_Z$ above yields the following lemma. 

\begin{lem}\label{lem:frames}
In the category $\C_Z$, for a morphism $\alpha$ which is not in the frame 
$\{\phi_{ij}\}_{ij}$ of the category $\C_Z$, if $u\circ v = \alpha$, then $u=1$ or $v=1$. 
\end{lem}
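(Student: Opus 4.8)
The plan is to read off the conclusion directly from the composition law used to build $\C_Z$ in the proof of Lemma \ref{lem:B-L}. First I would fix a morphism $\alpha$ of $\C_Z$ not belonging to the frame $\{\phi_{ij}\}_{i,j \in ob(\C_Z)}$, and suppose $\alpha = u \circ v$ with $v : i \to j$ and $u : j \to k$, so that $\alpha : i \to k$. The goal is then to show that $u = 1_j$ or $v = 1_i$.

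Next I would argue by contradiction. Assume both $u \neq 1_j$ and $v \neq 1_i$. By the very definition of the concatenation in $\C_Z$ recalled above --- the rule that $\beta \circ \alpha' = \phi_{i'k'}$ as soon as $\alpha' : i' \to j'$ and $\beta : j' \to k'$ are both non-identities --- we obtain $u \circ v = \phi_{ik}$. Hence $\alpha = \phi_{ik}$ lies in the frame, contradicting the hypothesis on $\alpha$. Therefore one of $u$, $v$ is an identity, which is the assertion. Note that this argument does not depend on whether $\alpha$ itself is an identity.

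I do not expect a genuinely hard step here; the only point worth making explicit is that identities are never in the frame, so that the statement is not vacuous for $\alpha$ an identity: if $\phi_{jk} = 1_i$ then $i = j = k$ and $\phi_{ii} = 1_i$, which is excluded by the requirement $\phi_{ii} \neq 1_i$ built into the construction of $\C_Z$. The main --- and essentially only --- observation is that every composite of two non-identity morphisms in $\C_Z$ is forced into the frame, and this is immediate from the definition of composition; there is no real obstacle.
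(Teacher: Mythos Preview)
Your argument is correct and is exactly what the paper intends: it states the lemma without proof, remarking only that it is yielded by the definition of the concatenation in $\C_Z$. Your contrapositive reading of that definition is the one-line justification the paper leaves implicit.
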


\begin{prop}\label{prop:construction1} 
Let $F =\{\phi_{ij}\}_{i, j}$ be the frame of the small category $\C_Z$ constructed above. Let $1$ be the subsets 
$\{1_i\}_{i \in ob(\C_Z)}$. 
Then for any partition $\{Q_\lambda \}_{\lambda}$ of the set $mor(\C_Z) \backslash (F \cup 1)$,  
the partition of $mor(\C_Z)$
$$
\Sigma' = \{ \{\phi_{ij}\} \mid i, j \in ob(\C_Z)\}\cup 1 \cup \{Q_\lambda \}_{\lambda} 
$$ 
satisfies the concatenation axiom; see Definition \ref{defn:schemoid}. 
In consequence, $(\C_Z, \Sigma')$ is a unital quasi-schemoid.   
\end{prop}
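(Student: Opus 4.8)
The plan is to verify the concatenation axiom of Definition \ref{defn:schemoid} directly for the partition $\Sigma'$, exploiting the very rigid composition law of $\C_Z$. Fix a triple $\sigma, \tau, \mu \in \Sigma'$ and two morphisms $h_1, h_2 \in \mu$; I must produce a bijection $(\pi_{\sigma\tau}^\mu)^{-1}(h_1) \cong (\pi_{\sigma\tau}^\mu)^{-1}(h_2)$. The key structural fact, already recorded in Lemma \ref{lem:frames}, is that whenever a composite $u\circ v$ equals a morphism $\alpha$ that is \emph{not} in the frame $F$, one of $u$, $v$ is an identity; and of course identities compose only with composable morphisms in the trivial way, while a frame element $\phi_{ij}$ can arise as a composite $\beta\circ\alpha$ with $\alpha,\beta\neq 1$ only when $\alpha\in\mathrm{Hom}_{\C_Z}(i,\cdot)$ and $\beta\in\mathrm{Hom}_{\C_Z}(\cdot,j)$, in which case the composite is \emph{always} $\phi_{ij}$. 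So the fibres $(\pi_{\sigma\tau}^\mu)^{-1}(h)$ are governed entirely by which of $\sigma$, $\tau$, $\mu$ is the identity block $1$, a singleton frame block $\{\phi_{ij}\}$, or one of the $Q_\lambda$.

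First I would dispose of the cases in which $\sigma = 1$ or $\tau = 1$. If $\sigma = 1$, then $(\pi_{1\tau}^\mu)^{-1}(h)$ is nonempty only when $h \in \tau$, and in that case it consists of the single pair $(1_{t(h)}, h)$; hence every nonempty fibre is a singleton and the two fibres over $h_1, h_2 \in \mu$ are either both empty (if $\mu \neq \tau$) or both singletons (if $\mu = \tau$). The case $\tau = 1$ is symmetric. So from now on assume $\sigma, \tau \neq 1$, i.e.\ every morphism in $\sigma$ and in $\tau$ is a non-identity, hence is either a frame element or lies in some $Q_\lambda$; in particular, by Lemma \ref{lem:frames}, any composite $s\circ t$ with $s\in\sigma$, $t\in\tau$ is forced to be a frame element $\phi_{ik}$. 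Consequently $(\pi_{\sigma\tau}^\mu)^{-1}(h)=\varnothing$ unless $\mu$ is a singleton frame block $\{\phi_{ik}\}$; and if $\mu$ is a singleton there is nothing to prove, since then $h_1 = h_2$. Thus the only substantive case is: $\sigma,\tau\neq 1$ and $\mu = \{\phi_{ik}\}$ a singleton, where the claim is vacuous because $\mu$ has only one element.

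The remaining arrangement of cases is where $\mu = 1$ or $\mu = Q_\lambda$ while $\sigma,\tau$ range over all blocks. If $\mu = 1$, say $h = 1_j$, then $(\pi_{\sigma\tau}^\mu)^{-1}(1_j)$ consists of pairs $(s,t)$ with $s\circ t = 1_j$; since $\C_Z$ has no nontrivial invertible arrows other than forced by the construction (arrows $i\to j$ and $j\to i$ with $i\neq j$ cannot compose to $1_j$ unless one is an identity — this needs the explicit check that $\phi_{ij}\circ\phi_{ji}=\phi_{jj}\neq 1_j$), the only solutions are $s = t = 1_j$, so again all nonempty fibres are singletons and we are done. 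If $\mu = Q_\lambda$ and $h\in Q_\lambda$, then since $h$ is not a frame element, Lemma \ref{lem:frames} forces any $(s,t)\in(\pi_{\sigma\tau}^\mu)^{-1}(h)$ to have $s = 1_{t(h)}$ with $t = h$, or $t = 1_{s(h)}$ with $s = h$; the count of such pairs depends only on whether $\sigma$ (resp.\ $\tau$) equals $1$ and whether $h\in\sigma$ (resp.\ $h\in\tau$ after removing the identity coordinate) — conditions that are the same for all $h\in Q_\lambda$ when $\sigma=1$ or $\tau=1$, and give the empty fibre otherwise.

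In every case the cardinality of $(\pi_{\sigma\tau}^\mu)^{-1}(h)$ depends only on the blocks $\sigma,\tau,\mu$ and not on $h\in\mu$, which is exactly the concatenation axiom; unitality of $(\C_Z,\Sigma')$ is immediate since $1$ is one of the blocks of $\Sigma'$, so $\alpha\subset J_0$ whenever $\alpha\cap J_0\neq\varnothing$. The main obstacle is purely bookkeeping: organizing the case analysis on the three blocks so that no sub-case is overlooked, and in particular pinning down precisely when a frame element can be realized as a composite of two non-identities (the ``always $\phi_{ik}$'' phenomenon) versus when a composite is forced to be trivial. I expect the frame-versus-$Q_\lambda$ interaction — showing no fibre over a $Q_\lambda$-element can have more than one pair when $\sigma,\tau\neq 1$ — to be the step requiring the most care, but Lemma \ref{lem:frames} makes it essentially immediate.
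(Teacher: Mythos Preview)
Your proposal is correct and follows essentially the same approach as the paper: a case analysis on the blocks $\sigma,\tau,\mu$, using Lemma~\ref{lem:frames} to force composites of non-identities into the frame and the singleton nature of the frame blocks $\{\phi_{ij}\}$ to make the axiom trivial there. The paper's argument is organized more tersely by computing the structure constants $p^\mu_{\sigma\tau}$ directly according to whether $\mu$ is a frame singleton, a $Q_\lambda$, or $1$; your write-up is more explicit but has some redundancy (your ``remaining arrangement'' paragraph revisits cases already disposed of by the preceding two paragraphs).
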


\begin{proof}
The intersection number $p^{\{\phi_{ij}\}}_{u,v}$ is valid because $\{\phi_{ij}\}$ is a set of single element. 
Write $\widetilde{J}=  J\backslash (\{\phi_{ii}\}\cup 1)$. For $L = Q_\lambda, \widetilde{J}$, 
the usual argument shows that
 $p^{L}_{uv} = 0$ for any $u, v \in \{\{\phi_{ij}\} \mid i, j\}\cup \{Q_\lambda \}_\lambda\cup \widetilde{J}$, 
 $p^{L}_{1u} = \delta_{Lu}= p^{L}_{u1}$ and $p^{L}_{11}=0$. 
It is readily seen that 
 $$
 p^1_{uv} = 
 \left\{
\begin{array}{l}
1 \ \ \ \text{if} \  u=1=v    \\
0  \ \ \text{otherwise} 
\end{array}
\right.
$$
This completes the proof.  
\end{proof}

\begin{prop}\label{prop:construction2}
Let $S$ be a partition of the frame $\{\phi_{ij}\}_{i, j}$ of the finite category $\C_Z$ constructed above, namely 
$\{\phi_{ij}\}_{i, j}= \coprod_{\sigma \in S}\sigma$. Suppose that 
$S$ satisfies the concatenation axiom and that the condition (i) in Definition \ref{defn:association_schemoids} 
for $S$ holds. Assume further that $z_{ij}=z_{i'j'}=:z_\sigma$ 
for any $\sigma \in S$, $\phi_{ij}, \phi_{i'j'} \in \sigma$. 
We define $\widetilde{\sigma} = \{\phi_{ij}^\lambda \}_{\phi_{ij}\in \sigma, \lambda = 1, .., z_{ij}-1}$ for $\sigma \in S$.  
Then 
$$\Sigma = \{1\} \cup S \cup \widetilde{S} $$
satisfies the concatenation axiom and hence the pair $(\C_Z, \Sigma)$ is a unital quasi-schemoid, 
where $1 = \{1_i \}_{i \in ob(\C_Z)}$ and 
$\widetilde{S} = \{\widetilde{\sigma} \}_{\sigma \in S, \widetilde{\sigma}\neq \phi}$. 
\end{prop}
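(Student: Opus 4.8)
The plan is to verify the concatenation axiom of Definition~\ref{defn:schemoid} for $\Sigma$ directly, organizing the check by the part $\mu\in\Sigma=\{1\}\cup S\cup\widetilde{S}$ that contains the target morphism. Write $F$ for the frame and $1$ for $\{1_i\}_{i\in ob(\C_Z)}$. Two features of $\C_Z$ do all the work: \emph{(a)} by the construction recalled before Lemma~\ref{lem:frames}, the composite of any two non-identity morphisms lies in $F$; and \emph{(b)} by Lemma~\ref{lem:frames}, if $u\circ v\notin F$ then $u$ or $v$ is an identity. Since $z_{ij}\ge 1$ whenever $\text{Hom}_{\C_Z}(i,j)\ne\emptyset$, every morphism outside $F\cup 1$ is parallel to a unique frame morphism, which lies in a unique part of $S$; hence $\widetilde{S}$ partitions $mor(\C_Z)\setminus(F\cup 1)$ and $\Sigma$ partitions $mor(\C_Z)$. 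For $\alpha\in S\cup\widetilde{S}$ let $\overline{\alpha}\in S$ denote the frame part it comes from ($\overline{\sigma}=\sigma$, $\overline{\widetilde{\sigma}}=\sigma$). Because $S$ satisfies condition~(i) of Definition~\ref{defn:association_schemoids}, each $\sigma\in S$ consists either only of loops $\phi_{ii}$ or only of non-loops; together with the hypothesis $z_{ij}=z_{i'j'}$ for $\phi_{ij},\phi_{i'j'}\in\sigma$ this shows that the number of morphisms parallel to but different from $\phi_{ij}$ (and from $1_i$ when $i=j$) is the same for all $\phi_{ij}\in\sigma$; denote it $c(\sigma)$, and set $c(\alpha)=1$ for $\alpha\in S$ and $c(\alpha)=c(\overline{\alpha})$ for $\alpha\in\widetilde{S}$.

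Now fix $\alpha,\beta,\mu\in\Sigma$ and $f\in\mu$, and compute $|(\pi_{\alpha\beta}^{\mu})^{-1}(f)|$. If $\mu=1$: by \emph{(a)}, a composite $a\circ b$ ($a\in\alpha$, $b\in\beta$) is an identity only when $a$ and $b$ are identities, so the fibre over $1_x$ is $\{(1_x,1_x)\}$ when $\alpha=\beta=1$ and empty otherwise, independently of $x$. If $\mu=\widetilde{\sigma_0}\in\widetilde{S}$: since $f\notin F$, fact \emph{(b)} forces $(a,b)=(1_{t(f)},f)$ or $(a,b)=(f,1_{s(f)})$, so the fibre has cardinality $1$ if $\{\alpha,\beta\}=\{1,\widetilde{\sigma_0}\}$ and $0$ otherwise, independently of $f$. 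If $\mu=\sigma_0\in S$ and $f=\phi_{ij}$: when one of $\alpha,\beta$ equals $1$, say $\beta=1$, then $a\circ b=\phi_{ij}$ forces $b=1_i$, $a=\phi_{ij}$, so the fibre is a singleton if $\alpha=\sigma_0$ and empty otherwise (and empty if $\alpha=\beta=1$); when both $\alpha,\beta\in S\cup\widetilde{S}$, fact \emph{(a)} says every $(a,b)$ with $a\in\alpha$, $b\in\beta$, $s(b)=i$, $t(a)=j$, $s(a)=t(b)$ automatically composes to $\phi_{ij}$, so sorting such pairs by the intermediate object $y=t(b)$ and using the description of $\widetilde{\sigma}$ gives
$$
\bigl|(\pi_{\alpha\beta}^{\sigma_0})^{-1}(\phi_{ij})\bigr| \;=\; c(\alpha)\,c(\beta)\cdot\#\{\,y\mid \phi_{yj}\in\overline{\alpha},\ \phi_{iy}\in\overline{\beta}\,\} \;=\; c(\alpha)\,c(\beta)\,p^{\sigma_0}_{\overline{\alpha}\,\overline{\beta}},
$$
which, as $S$ satisfies the concatenation axiom, is independent of the choice of $\phi_{ij}\in\sigma_0$. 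In every case the fibre cardinality depends only on $(\alpha,\beta,\mu)$, so $\Sigma$ satisfies the concatenation axiom; the pair $(\C_Z,\Sigma)$ is then a quasi-schemoid, and it is unital because $1$ is the only part of $\Sigma$ meeting $J_0=\{1_i\}$ and $1\subseteq J_0$.

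The only genuinely delicate point is the last sub-case $\mu\in S$ with $\alpha,\beta\in S\cup\widetilde{S}$: one has to see that the intermediate-object count factors as $c(\alpha)c(\beta)$ times the frame structure constant $p^{\sigma_0}_{\overline{\alpha}\,\overline{\beta}}$. This is exactly where both hypotheses enter — condition~(i) on $S$ is needed so that loops and non-loops never mix inside a part, which makes $c(\sigma)$ well-defined, and the invariance of $p^{\sigma_0}_{\overline{\alpha}\,\overline{\beta}}$ over $\sigma_0$ is precisely the concatenation axiom assumed for $S$. Beyond this bookkeeping I expect no obstacle; everything else is forced by facts \emph{(a)} and \emph{(b)}.
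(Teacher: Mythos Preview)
Your proof is correct and follows essentially the same strategy as the paper's: a case split on the target block $\mu$, driven by the two facts you label \emph{(a)} and \emph{(b)}, culminating in the factorisation $|(\pi_{\alpha\beta}^{\sigma_0})^{-1}(\phi_{ij})| = c(\alpha)c(\beta)\,p^{\sigma_0}_{\overline{\alpha}\,\overline{\beta}}$, which the paper records piecewise as formulas (x)--(xii). Your unified $c(\alpha)$ notation and your explicit invocation of condition~(i) to make $c(\sigma)$ well-defined (separating the loop and non-loop cases) are slightly tidier than the paper's exposition, but the content is the same.
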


\begin{proof} We first observe that for any $f, g \in mor(\C)$, $f\circ g \in \coprod_{\sigma \in S}\sigma$ if 
$f$ and $g$ are composable and $f\circ g \neq 1_i$ for some $i$. 
Moreover, it is immediate that if $f \circ g = 1_i$ then $f = g= 1_i$. Thus we have 
i) $p^{1}_{11} = 1$, 
ii) $p^1_{uv} = 0$  for $u, v \in S \cup \widetilde{S}$, 
iii) $p^{\widetilde{\sigma}}_{11} = 0$, 
iv) $p^{\widetilde{\sigma}}_{1\widetilde{\tau}} = \delta_{\widetilde{\sigma}\widetilde{\tau}}= 
p^{\widetilde{\sigma}}_{\widetilde{\tau} 1}$, 
v) $p^{\widetilde{\sigma}}_{uv} = 0$ for $u, v \in S \cup \widetilde{S}$, 
vi) $p^{\widetilde{\sigma}}_{1{\tau}} = 0 =  p^{\widetilde{\sigma}}_{{\tau} 1}$, 
vii) $p^\sigma_{11} = 0$, 
viii) $p^\sigma_{1\widetilde{\tau}} = 0 = p^\sigma_{\widetilde{\tau}1}$ and 
ix) $p^\sigma_{1\tau} = \delta_{\sigma\tau} = p^\sigma_{\tau 1}$. 
The assumption on the entries $z_{ij}$ enables us to obtain the following equality; see the figure below. \\
x) $p^\sigma_{\tau\widetilde{\mu}} = p^\sigma_{\tau\mu}\cdot (z_\mu -1)$, \\
xi) $p^\sigma_{\widetilde{\mu}\tau} = (z_\mu-1)\cdot p^\sigma_{\mu\tau} $,  \\
xii) $p^\sigma_{\widetilde{\mu}\widetilde{\tau}} = (z_\mu -1)\cdot p^\sigma_{\mu\tau} \cdot (z_\tau-1)$. 
$$
\xymatrix@C35pt@R5pt{
\bullet \ar[rr]^{\phi_{ij}} \ar[dr] \ar[dddr]  & & \bullet \ar[dl] \ar[dddl] \ar@/^/[dl] \ar@/^1pc/[dddl] \\
 & \bullet &  \\
 &   & \\
 &  \bullet  &}
$$
It is immediate that the condition (i) in Definition \ref{defn:schemoid} holds for the partition $\Sigma$. 
We have the result. 
\end{proof}

Let $(X, P=\{P_l\}_{l= 0, .., s})$ be an association scheme with $X = \{1, ..., m\}$, where $P_0 = \{(i,i) \mid i \in X\}$. 
Let $R_l$ denote the adjacency matrix associated with $P_l$; that is, 
the $(i, j)$ entry $R_l(i,j)$ of $R_l$ is defined by 
$$
R_l(i,j) = 
 \left\{
\begin{array}{l}
1 \ \ \ \text{if} \  (i, j) \in P_l   \\
0  \ \ \text{otherwise} 
\end{array}
\right.
$$
Thickening the given association scheme, we can obtain an association schemoids. 

\begin{thm}\label{thm:construction3}
With the same notation as above, for positive integers $z_0, ..., z_s$, 
we define an $m\times m$ matrix $Z$ by $$Z = z_0 R_0 + z_1 R_1 + \cdots + z_s R_s + \text{\em diag}(1, 1, ..., 1).$$
Let $S=\{\sigma_l\}_{l=0, 1, .., s}$ be a partition of the frame $\{\phi_{ij}\}_{ij}$ of the category $\C_Z$, where 
$\sigma_0 = \{\phi_{ii} \mid i = 1, ..., m\} $ and $\sigma_l =\{\phi_{ij} \mid (j, i)\in P_l \}$. Then $S$ satisfies the concatenation axiom.
In consequence, the pair $(\C_Z, \Sigma)$ with $\Sigma$ defined in Proposition \ref{prop:construction2} is 
a unital quasi-schemoid. Moreover, if $z_0 = \cdots = z_s$, then $(\C_Z, \Sigma)$ admits a unital schemoid structure.  
\end{thm}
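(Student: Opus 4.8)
The plan is to check that $S$ meets the hypotheses of Proposition \ref{prop:construction2}, which then delivers the quasi-schemoid $(\C_Z,\Sigma)$, and afterwards, in the equal-weights case, to construct by hand the contravariant functor $T$ that upgrades it to an association schemoid. As a preliminary, note that since $\{P_l\}$ partitions $X\times X$, every pair $(i,j)$ lies in a unique $P_l$, so $z_{ij}=z_l+\delta_{ij}\ge 1$, and in particular $z_{ii}=z_0+1\ge 2$; thus Lemma \ref{lem:B-L} applies and the frame $\{\phi_{ij}\}$ contains one arrow for every $(i,j)\in X\times X$, so $S=\{\sigma_l\}_{l=0,\dots ,s}$ is genuinely a partition of it, characterised by $\phi_{ij}\in\sigma_l\iff(j,i)\in P_l$.

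The heart of the matter is the concatenation axiom for $S$. Fix $\sigma=\sigma_l$, $\tau=\sigma_{l'}$, $\mu=\sigma_{l''}$ and $f=\phi_{pq}\in\mu$, so $(q,p)\in P_{l''}$. An element of $(\pi_{\sigma\tau}^{\mu})^{-1}(f)$ is a pair $(a,b)$ with $a\in\sigma$, $b\in\tau$, $s(a)=t(b)$ and $a\circ b=f$. Since $\sigma,\tau$ consist of frame arrows, $a$ and $b$ are non-identities, so by the composition rule of $\C_Z$ one has $a\circ b=\phi_{s(b),\,t(a)}$; hence $a\circ b=\phi_{pq}$ forces $s(b)=p$ and $t(a)=q$. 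Writing $r=s(a)=t(b)$, so that $b=\phi_{pr}$ and $a=\phi_{rq}$, the datum $(a,b)$ amounts to an index $r$ with $\phi_{pr}\in\sigma_{l'}$ and $\phi_{rq}\in\sigma_l$, i.e. with $(r,p)\in P_{l'}$ and $(q,r)\in P_l$. Therefore
$$
\sharp\,(\pi_{\sigma\tau}^{\mu})^{-1}(f)=\sharp\{\,r\in X\mid(q,r)\in P_l,\ (r,p)\in P_{l'}\,\}=p_{P_lP_{l'}}^{P_{l''}},
$$
which depends only on $l,l',l''$ and not on the chosen $f\in\mu$ (the cases $\sigma=\sigma_0$ or $\tau=\sigma_0$ need no separate argument, since $(i,i)\in P_0$). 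This proves the concatenation axiom and identifies the structure constants of $S$ with the intersection numbers of $(X,P)$. The remaining hypotheses of Proposition \ref{prop:construction2} are routine: condition (i) of Definition \ref{defn:association_schemoids} holds for $S$ because $\sigma_0$ consists precisely of the frame loops $\phi_{ii}$, so $\sigma_0\subset J$, while each $\sigma_l$ with $l\ge 1$ is disjoint from $J$ (as $P_l\cap 1_X=\varnothing$); and $z_{ij}$ is constant on each $\sigma_l$, since $(j,i)\in P_l$ forces $(i,j)\in P_l^{*}=P_{l^*}$, whence $z_{ij}=z_{l^*}$ for $l\ge 1$ and $z_{ii}=z_0+1$ for $l=0$. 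Proposition \ref{prop:construction2} now yields that $(\C_Z,\Sigma)$ is a unital quasi-schemoid.

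For the schemoid structure, assume $z_0=\cdots=z_s=z$, so that $z_{ij}=z+\delta_{ij}$ and $\sharp\text{Hom}_{\C_Z}(i,j)=\sharp\text{Hom}_{\C_Z}(j,i)$ for all $i,j$. I would define a contravariant functor $T:\C_Z\to\C_Z$ that is the identity on objects, fixes each $1_i$, sends the frame arrow $\phi_{ij}$ to $\phi_{ji}$, and on the remaining arrows from $i$ to $j$ uses a chosen bijection onto the remaining arrows from $j$ to $i$, the choices arranged so that $T^2=id$ (for $i=j$ one may take the identity, which is consistent because any two non-identity endomorphisms of $i$ compose to $\phi_{ii}$). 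Functoriality then follows from the composition rule of $\C_Z$ exactly as above: for composable non-identities $f,g$ one has $g\circ f=\phi_{s(f),\,t(g)}$, and since $T$ preserves non-identities, $T(f)\circ T(g)=\phi_{t(g),\,s(f)}=T(g\circ f)$, while composites involving an identity are trivial. Because $\phi_{ij}\in\sigma_l\iff(j,i)\in P_l\iff(i,j)\in P_{l^*}\iff\phi_{ji}\in\sigma_{l^*}$, we obtain $\sigma_l^{*}=\sigma_{l^*}$ and $\widetilde{\sigma_l}^{\,*}=\widetilde{\sigma_{l^*}}$, and obviously $1^{*}=1$; thus $\sigma^{*}\in\Sigma$ for every $\sigma\in\Sigma$. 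Since moreover the only classes of $\Sigma$ meeting $J$ are $1$, $\sigma_0$ and $\widetilde{\sigma_0}$, each of which is contained in $J$, condition (i) of Definition \ref{defn:association_schemoids} holds for $\Sigma$ as well, and $(\C_Z,\Sigma,T)$ is the required unital association schemoid.

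I expect the main obstacle to be the bookkeeping in the concatenation computation — keeping consistent which of $(i,j)$, $(j,i)$ labels the frame arrow $\phi_{ij}$, the order of composition in $\C_Z$, and the shape of the intersection number $p_{ef}^{g}$, so that $(\pi_{\sigma\tau}^{\mu})^{-1}(f)$ is matched with $p_{P_lP_{l'}}^{P_{l''}}$ on the nose — together with checking that the degenerate classes $\sigma_0$, $\widetilde{\sigma_0}$ and $1$ are handled uniformly throughout.
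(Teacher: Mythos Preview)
Your proof is correct and follows the same line as the paper's: verify the hypotheses of Proposition~\ref{prop:construction2} by identifying the structure constants of $S$ with the intersection numbers of $(X,P)$, then in the equal-weights case build $T$ by swapping $\phi_{ij}\leftrightarrow\phi_{ji}$ and extending via bijections on the non-frame arrows. You have in fact spelled out considerably more than the paper does (the explicit fiber computation, the functoriality check for $T$, and the verification of condition~(i) for $\Sigma$), and your bookkeeping of conventions is consistent throughout.
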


\begin{proof} Since $(X, P)$ is an association scheme, we see that $S$ satisfies the concatenation axiom and that 
the condition (i) in Definition \ref{defn:association_schemoids} holds.  
Proposition \ref{prop:construction2} implies that $(\C_Z, \Sigma)$ is a unital quasi-schemoid. 

Suppose that  $z_0 = \cdots = z_s$. For any objects $i, j$, let  $M(i, j)$ be the subset of $mor(\C_Z)$ 
consisting of morphisms $f$ which satisfies the condition that $s(f)=i$, $t(f)=j$ and $f \neq \phi_{ij}$. Then there exists a bijection 
$\theta : M(i, j) \to M(j, i)$. With the bijection, we define a contravariant functor 
$T : \C_Z \to \C_Z$ with $T^2=1_{\C_Z}$ by $T(\phi_{ij}) = \phi_{ji}$ 
and for $f$ which is not in the frame, by $T(f) = \theta(f)$. 
We then have an association schemoid $(\C_Z, \Sigma, T)$.     
\end{proof}

For an association scheme $(X, P)$, 
we may write ${\mathcal S}{\C}_{z_0, ..., z_s}(X, P)$ for the quasi-schemoid $(\C_Z, \Sigma)$ constructed via the 
procedure in Theorem \ref{thm:construction3}. In what follows, we denote  by ${\mathcal S}{\C}_{z}(X, P)$ 
the induced association schemoid  ${\mathcal S}{\C}_{z, ..., z}(X, P)$.

\begin{prop}\label{prop:Phi}
Let $(X, P)$ be an association scheme. With the same notation as in Proposition \ref{prop:construction2}, 
we define a functor $\Phi : {\mathcal S}{\C}_{z_0, ..., z_s}(X, P) \to j(X, P)$ 
by $\Phi(\phi_{ij}^\lambda) = \phi_{ij}$  for any $\phi_{ij}^\lambda \in S \cup \widetilde{S}$, where $\phi_{ij}^0=\phi_{ij}$ and 
$X$ is considered as a directed complete graph with $\phi_{ij}$'s as edges  provided 
$\phi_{ii} = 1_i$. Then $\Phi$ is an admissible map in the sense of Definition \ref{defn:admissible_maps}. 
\end{prop}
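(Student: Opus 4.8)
The plan is to unwind the definition of admissibility from Definition~\ref{defn:admissible_maps} directly, exploiting the very rigid combinatorial structure of $\C_Z$ coming from Lemma~\ref{lem:frames} and the explicit description of its composition law. Recall that a morphism $\Phi : (\C,S') \to (\mathcal D, T)$ is admissible when, for every object $x$, every partition class $\sigma$, and every $g \in \Phi(\sigma)$ with $t(g) = \Phi(x)$, there is some $f \in \sigma$ with $t(f) = x$ and $\Phi(f) = g$. Here the target $j(X,P)$ has objects the elements of $X$ and exactly one morphism $\phi_{ij}$ from $i$ to $j$; since $\Phi$ is the identity on objects, $\Phi(x) = x$ for $x \in \{1,\dots,m\}$, so the condition $t(g) = \Phi(x)$ just pins $g$ down to be the unique arrow $\phi_{ij}$ with $j = x$, for $i$ determined by the class of $g$.

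First I would fix notation: take an object $x = j$ of $\C_Z$, a class $\gamma \in \Sigma$, and $g \in \Phi(\gamma)$ with $t(g) = j$. The classes of $\Sigma$ are of three kinds: the class $1 = \{1_i\}$, a ``frame'' class $\sigma_l$, and a ``thickening'' class $\widetilde{\sigma_l}$ (see Proposition~\ref{prop:construction2}). In the first case $\Phi(1) = \{1_i\}_{i} = \{\phi_{ii}\}_i \subset mor(j(X,P))$ (identifying $\phi_{ii}$ with $1_i$), so $g = 1_j = \phi_{jj}$, and we may take $f = 1_j \in 1$, which indeed has target $j$. In the second case, where $\gamma = \sigma_l = \{\phi_{ik} \mid (k,i) \in P_l\}$, we have $\Phi(\sigma_l) = \sigma_l$ and $g = \phi_{ij}$ for the unique $i$ with $(j,i) \in P_l$ (using that $P_l$ is a relation where for each target $j$ there is, up to the structure constant condition, a nonempty set of sources, but in any case $\phi_{ij} \in \sigma_l$ already forces $t(\phi_{ij}) = j$); then $f = \phi_{ij}$ itself lies in $\sigma_l$ with $t(f) = j$ and $\Phi(f) = \phi_{ij} = g$. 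The genuinely substantive case is $\gamma = \widetilde{\sigma_l}$, where $\widetilde{\sigma_l} = \{\phi_{ik}^\lambda\}_{(k,i)\in P_l,\ 1 \le \lambda \le z-1}$ and $\Phi(\widetilde{\sigma_l}) = \sigma_l$ again (since $\Phi(\phi_{ik}^\lambda) = \phi_{ik}$); given $g = \phi_{ij}$, we must produce $\lambda$ with $t(\phi_{ij}^\lambda) = j$ and $\Phi(\phi_{ij}^\lambda) = \phi_{ij}$. But $\phi_{ij}^\lambda$ is by construction a morphism $i \to j$ distinct from $\phi_{ij}$, so any choice $\lambda \in \{1,\dots,z-1\}$ works, and such $\lambda$ exists precisely because $z = z_{ij} \ge 2$ whenever $\phi_{ij}$ is a frame arrow with $(j,i) \in P_l$ and $z_l \ge 1$ — here one uses the hypothesis that all $z_0,\dots,z_s$ (hence $z$) are such that the relevant diagonal-free entries are $\ge 1$, so the thickening set is nonempty exactly when needed, i.e. when $\phi_{ij} \in \Phi(\widetilde{\sigma_l})$ forces $\widetilde{\sigma_l}$ to meet the fibre over $\phi_{ij}$.

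The one point requiring care — and the main obstacle — is a vacuity/consistency check: if $\widetilde{\sigma_l} = \phi$ (the empty set is explicitly excluded from $\widetilde S$ in Proposition~\ref{prop:construction2}), then $\Phi(\widetilde{\sigma_l})$ cannot contain any $g$, so there is nothing to verify; and if $\widetilde{\sigma_l} \neq \phi$ then $z_\sigma - 1 \ge 1$, which is exactly what guarantees that for \emph{every} $\phi_{ij} \in \sigma_l$ there is at least one lift $\phi_{ij}^\lambda$, because the uniformity hypothesis $z_{ij} = z_{i'j'} = z_\sigma$ for all $\phi_{ij}, \phi_{i'j'} \in \sigma_l$ (built into the construction of ${\mathcal S}\C_{z_0,\dots,z_s}$) means the thickening is ``spread evenly'' over the class. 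I would therefore open the proof by invoking this uniformity to reduce to checking a single representative arrow in each class, then dispatch the three cases above. Since $\Phi$ is the identity on objects and the target's hom-sets are singletons, there are no further subtleties about the morphism $g$ being ``reachable,'' and the proof is short.

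\begin{proof}
Since $\Phi$ is the identity on objects and each hom-set of $j(X,P)$ has at most one element, for an object $x=j$ of $\C_Z$ a morphism $g$ of $j(X,P)$ with $t(g)=\Phi(x)=j$ is of the form $g=\phi_{ij}$ for a unique $i$. Let $\gamma\in\Sigma$ with $g\in\Phi(\gamma)$. If $\gamma=1$, then $\Phi(\gamma)=\{\phi_{ii}\}_i$, so $i=j$ and $f:=1_j\in 1$ satisfies $t(f)=j$ and $\Phi(f)=g$. If $\gamma=\sigma_l$ for some $l$, then $\Phi(\sigma_l)=\sigma_l$ and $g=\phi_{ij}\in\sigma_l$, so $f:=\phi_{ij}\in\sigma_l$ has $t(f)=j$ and $\Phi(f)=g$. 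Finally suppose $\gamma=\widetilde{\sigma_l}$ for some $l$ with $\widetilde{\sigma_l}\neq\phi$. Then $\Phi(\widetilde{\sigma_l})=\sigma_l$ and $g=\phi_{ij}\in\sigma_l$. Because $z_{ij}=z_\sigma$ for all $\phi_{ij}\in\sigma_l$ and $\widetilde{\sigma_l}\neq\phi$, we have $z_\sigma-1\geq 1$; hence the morphism $\phi_{ij}^{1}:i\to j$ lies in $\widetilde{\sigma_l}$, and $f:=\phi_{ij}^{1}$ satisfies $t(f)=j$ and $\Phi(f)=\phi_{ij}=g$. In all cases the required lift exists, so $\Phi$ is admissible.
\end{proof}
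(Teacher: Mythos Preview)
Your proof is correct and follows the same approach the paper intends: a direct verification of the admissibility condition using the explicit description of $\Sigma$ and of $\Phi$. The paper's own proof is the single sentence ``The result follows from the construction of the quasi-schemoid $(\C_Z,\Sigma)$,'' so you have simply written out the case analysis that the authors leave implicit; the key observation in your third case---that $\widetilde{\sigma_l}\neq\phi$ forces $z_\sigma-1\geq 1$ uniformly over the class, so the lift $\phi_{ij}^1$ always exists---is exactly the content the paper is gesturing at.
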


\begin{proof}
The result follows from the construction of the quasi-schemoid $(\C_Z, \Sigma)$. 
\end{proof}

\begin{rem} Under the same notation as in Proposition \ref{prop:construction2}, 
we consider an equation of the form $\phi_{kj}^{\e'}\circ\phi_{ik}^\e= \phi_{ij}$, where 
$\e' = 0, 1$, $\e = 0, 1$ and $\phi_{lm}^0 = \phi_{lm}$. For given elements $\phi_{ik}^\e$ and $\phi_{ij}$, a solution $\phi_{kj}^{\e'}$ 
of the equation is exactly one in an element of $\{1 \}$, $S$ or $\widetilde{S}$. Moreover, 
we see that if the equation $f\circ g = \phi_{ij}^1$ has a solution, then  
one of $f$ and $g$ should be the identity. This yields that the association schemoid ${\mathcal S}{\C}_{z}(X, P)$ satisfies the condition (P) 
in Remark \ref{rem:condition_P} if $z = 1$ or $2$. 
\end{rem}

\begin{rem} \label{rem:A&K} Let $\Phi : {\mathcal S}{\C}_{1}(X, P) \to j(X, P)$ be the admissible map in Proposition \ref{prop:Phi}. 
Then the induced map $\K(\Phi) : \K({\mathcal S}{\C}_{1}(X, P)) \to {\mathcal A}(X, P)$  is not an isomorphism.
In fact, we see that 
$\dim {\mathcal A}(X, P) + 1 = \dim \K({\mathcal S}{\C}_{1}(X, P))$. 
\end{rem}

\begin{rem} We recall $U : q{\bf ASmd} \to {\bf Cat}$ the forgetful functor described in the diagram (6.1).
Then the classifying space $B\C$ of the category $\C = U{\mathcal S}{\C}_{z}(X, P)$ is a contractible. 
To see this, we choose an object $i$ of $\C$ and consider the functor 
$\pi : \C \to \bullet$ and $\iota : \bullet \to \C$, where $\bullet$ denotes the trivial category with the one object $\ast$ and $\iota(\ast) =i$.  
It is immediate that $\pi\circ \iota = 1_\bullet$. We may define a natural transformation $\eta : \iota\circ \pi \to 1_{\C}$ by 
$\eta(k) = \phi_{ik}$ for $k \in ob(\C)$. In fact, for any $\phi_{kl}^\lambda : k \to l$ in $\C$, we see that 
$$
\eta(l)\circ ((\iota\circ \pi) (\phi_{kl}^\lambda))=\phi_{il}\circ 1_i = \phi_{il} = \phi_{kl}^\lambda\circ \phi_{ik} = \phi_{kl}^\lambda \circ \eta(k).
$$
Then we see that $B(\pi)\circ B(\iota) = 1_{B\bullet}$ and $B(\iota)\circ B(\pi)\simeq 1_{B\C}$. 
This implies that $B\C$ is homotopy equivalent to the space of a point and hence to $B(Uj(X, P))$; see the comments described before 
Corollary \ref{cor:trivial_extensions}. On the other hand, 
Remark \ref{rem:A&K} states that $\K j(X, S)$ is not isomorphic to $\K{\mathcal S}{\C}_{1}(X, P)$. 

It is important to recall Corollary \ref{cor:isomorphism} and the results in Examples \ref{ex:contractible_spaces} and 
\ref{ex:blow-ups}. In some case, a linear extension $D_+ \to (\C, \widetilde{S}, \widetilde{T}) \stackrel{q}{\to} j(X, P)$ 
gives rise to an isomorphism $\K(q) :  \K(\C, \widetilde{S}) \stackrel{\cong}{\to} \K j(X,P)= {\mathcal A}(X, P)$ while the classifying space 
$B(U(\C, \widetilde{S}))$ is not homotopy equivalent to $B(Uj(X, P))$. 
\end{rem}

The following theorem ensures that the construction in Theorem \ref{thm:construction3} is functorial. 

\begin{prop}
Let $z$ be a positive integer. Then the correspondence sending an association scheme $(X, P)$ to the association 
schemoid ${\mathcal S}{\C}_z(X, P)$ gives rise to a functor 
${\mathcal S}{\C}_z( \ ) : {\bf AS} \to {\bf ASmd}$. Moreover, association schemes $(X, P)$ and $(X', P')$ 
are isomorphic if and only if so are schemoids  ${\mathcal S}{\C}_z(X, P)$ and ${\mathcal S}{\C}_z(X', P')$.  
\end{prop}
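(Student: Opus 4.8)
The plan is to verify the two assertions separately: functoriality of ${\mathcal S}{\mathcal C}_z(\ )$, and the statement that the functor reflects and preserves isomorphisms. For functoriality, suppose $f : (X, P) \to (X', P')$ is a morphism in ${\bf AS}$, so $f$ is a map $X \to X'$ with $f(P_l) \subset P'_{l'}$ for some index $l'$ depending on $l$. I would first use Theorem \ref{thm:construction3} to recall that ${\mathcal S}{\mathcal C}_z(X, P) = (\C_Z, \Sigma, T)$ with $Z = z(R_0 + \cdots + R_s) + \mathrm{diag}(1, \ldots, 1)$ and the partition $\Sigma = \{1\} \cup S \cup \widetilde{S}$, where $\sigma_l = \{\phi_{ij} \mid (j,i) \in P_l\}$ and $\widetilde{\sigma_l} = \{\phi_{ij}^\lambda\}_{\phi_{ij} \in \sigma_l,\ \lambda = 1, \ldots, z-1}$. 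Because $z$ is fixed, every off-diagonal hom-set $\mathrm{Hom}_{\C_Z}(i,j)$ has the same cardinality $z$ whenever it is nonempty; the key point is that $f$ maps a nonempty hom-set of $\C_Z$ to a nonempty hom-set of $\C_{Z'}$ of the same cardinality, so one can fix, once and for all, bijections between the ``extra'' morphisms $\{\phi_{ij}^\lambda\}_{\lambda=1}^{z-1}$ over each edge $(i,j)$ and $\{\phi_{f(i)f(j)}^\lambda\}_{\lambda=1}^{z-1}$ over the image edge, sending $\phi_{ij}^0 = \phi_{ij}$ to $\phi_{f(i)f(j)}$. I would define ${\mathcal S}{\mathcal C}_z(f)$ on objects by $i \mapsto f(i)$ and on morphisms by this chosen collection of bijections, then check (a) it is a functor — compatibility with composition reduces, by Lemma \ref{lem:frames}, to the cases where one factor is an identity, which is immediate from the chosen normalization $\phi_{ij}^0 \mapsto \phi_{f(i)f(j)}$; (b) it respects $\Sigma$, i.e. $\sigma_l \mapsto \sigma'_{l'}$ and $\widetilde{\sigma_l} \mapsto \widetilde{\sigma'_{l'}}$, which is exactly the content of $f(P_l) \subset P'_{l'}$; and (c) it commutes with $T$, which holds provided the chosen bijections are compatible with the bijections $\theta : M(i,j) \to M(j,i)$ used in the proof of Theorem \ref{thm:construction3} — this is where a small amount of care is needed in the choice, but one is free to pick the bijections so that this holds. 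Functoriality in the categorical sense ($\mathrm{id} \mapsto \mathrm{id}$, composition preserved) then follows from making these choices coherently, or more cleanly by noting that the only real datum is the underlying map of finite sets and the bijections are determined once normalized.

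For the second assertion, the ``only if'' direction is an immediate consequence of functoriality: an isomorphism $(X,P) \cong (X',P')$ in ${\bf AS}$ maps under ${\mathcal S}{\mathcal C}_z(\ )$ to an isomorphism of schemoids. For the ``if'' direction, suppose $G : {\mathcal S}{\mathcal C}_z(X,P) \xrightarrow{\cong} {\mathcal S}{\mathcal C}_z(X',P')$ is an isomorphism of association schemoids. Then $G$ is in particular an isomorphism of the underlying categories, hence a bijection $g : ob(\C_Z) = X \to X' = ob(\C_{Z'})$ on objects; I claim $g$ is the desired isomorphism of schemes. First, $G$ must send the frame to the frame: the frame morphisms $\phi_{ij}$ are characterized among all morphisms by Lemma \ref{lem:frames} (a morphism $\alpha$ not in the frame can only factor trivially), and since $G$ is a category isomorphism it preserves this factorization property, so $G(\phi_{ij})$ lies in the frame over $(g(i), g(j))$, i.e. $G(\phi_{ij}) = \phi_{g(i)g(j)}$ — more precisely, since $G$ preserves the partition $\Sigma$ and the frame is $1 \cup S$ while the ``extra'' morphisms form $\widetilde{S}$, and these are distinguished, $G$ restricts to a bijection of frames. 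Now $G(\sigma_l) = \sigma'_{l'}$ for some $l'$ because $G$ preserves $\Sigma$ and $\sigma_l \in S$; unwinding the definition $\sigma_l = \{\phi_{ij} \mid (j,i) \in P_l\}$ and $G(\phi_{ij}) = \phi_{g(i)g(j)}$, this says exactly that $(j,i) \in P_l$ iff $(g(j), g(i)) \in P'_{l'}$, i.e. $g$ carries $P_l$ bijectively onto $P'_{l'}$. Hence $g$ induces a bijection $P \to P'$ compatible with the action on $X \times X$, so $g : (X,P) \to (X',P')$ is an isomorphism of association schemes.

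The main obstacle, I expect, is not any deep point but rather the bookkeeping in the functoriality proof: the construction $\C_Z$ in Lemma \ref{lem:B-L} involves arbitrary choices (the arrows $\phi_{ij}$, and implicitly the labelling $\lambda = 1, \ldots, z-1$ of the extra morphisms, and the bijections $\theta$ defining $T$), so one must be careful that ${\mathcal S}{\mathcal C}_z(f)$ is well-defined and functorial given these choices — strictly speaking one should fix the choices functorially, or argue up to canonical isomorphism. The cleanest route is to observe that, since $z$ is a fixed constant, the morphism set $\mathrm{Hom}_{\C_Z}(i,j)$ is canonically an ordered set $\{\phi_{ij}^0, \phi_{ij}^1, \ldots, \phi_{ij}^{z-1}\}$ (empty when $(j,i) \notin \coprod_l P_l$, a singleton $\{1_i\}$ when $i=j$ and we adopt $\phi_{ii}^0 = 1_i$ there, wait — here I must be slightly careful since $\phi_{ii} \neq 1_i$; in the diagonal case the hom-set has $z+1$ elements $\{1_i\} \cup \{\phi_{ii}^0, \ldots, \phi_{ii}^{z-1}\}$), and then ${\mathcal S}{\mathcal C}_z(f)$ is forced: $\phi_{ij}^\lambda \mapsto \phi_{f(i)f(j)}^\lambda$, $1_i \mapsto 1_{f(i)}$. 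With this normalization, functoriality and compatibility with $T$ are routine verifications using Lemma \ref{lem:frames} for composition and the symmetry $(j,i) \in P_l \Leftrightarrow (i,j) \in P_l^*$ for the functor $T$. I would write the proof in this normalized form to avoid the choice issue entirely.
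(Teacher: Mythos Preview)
Your approach is essentially the same as the paper's: define ${\mathcal S}{\mathcal C}_z(f)$ via the $\lambda$-labelling $\phi_{ij}^\lambda \mapsto \psi_{f(i)f(j)}^\lambda$, and for the ``if'' direction use Lemma~\ref{lem:frames} to show that an isomorphism of schemoids must carry the frame to the frame. The paper invokes Lemma~\ref{lem:frames} via the concrete factorization $F(\phi_{ij}) = F(\phi_{jj}\circ\phi_{ij}) = F(\phi_{jj})\circ F(\phi_{ij})$, which forces $F(\phi_{ij})$ into the frame since neither factor is an identity --- this is the same idea as your ``characterized by having a nontrivial factorization'', just made explicit. One small slip: you write ``the frame is $1 \cup S$'', but the identities $1_i$ are not in the frame (recall $\phi_{ii}\neq 1_i$); the frame is exactly $\coprod_{\sigma\in S}\sigma$, and your partition-based backup argument does not quite work as stated since one must first explain why $G$ cannot send an element of $S$ to an element of $\widetilde{S}'$ --- but your primary Lemma~\ref{lem:frames} argument already handles this, so the slip is harmless.
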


\begin{proof} 
Let $\{\phi_{ij}\}$ and $\{\psi_{lm}\}$ be the frames of ${\mathcal S}{\C}_z(X, P)$ and of ${\mathcal S}{\C}_z(X', P')$, respectively. 
Let  $f : (X, P) \to (X', P')$ be a morphism of association schemes. With the same notation as in Proposition \ref{prop:construction2}, 
we define ${\mathcal S}{\C}_z(f)(\phi_{ij}^\lambda) = \psi_{lm}^\lambda$ for $\lambda = 0, ...., z$ if 
$f(\phi_{ij}) = f(\psi_{lm})$, where  $\phi_{ij}^0 = \phi_{ij}$ and   $\psi_{lm}^0=\psi_{lm}$. Then ${\mathcal S}{\C}_z( \ )$ is a well-defined functor.  
This yields the first part of the assertion.  
As for the latter half, the "only if part" is immediate. 
Suppose that $F : {\mathcal S}{\C}_z(X, P) \to {\mathcal S}{\C}_z(X', P')$ is an isomorphism of schemoids. Since 
$F(\phi_{ij}) = F(\phi_{jj}\circ \phi_{ij}) = F(\phi_{jj})\circ F(\phi_{ij})$, it follows from Lemma \ref{lem:frames} that 
$F$ sends elements in the frame of ${\mathcal S}{\C}_z(X, P)$ to those of ${\mathcal S}{\C}_z(X', P')$. 
This completes the proof.   
\end{proof}

\begin{ex}
Let $H(2, 2)$ be the Hamming scheme of type $(2,2)$; that is, 
$H(2, 2) = ( \{0, 1\}^2, \{T_0, T_1, T_2\} )$, 
where $T_i$ denotes the set of the pair of words with the Hamming metric $i$; see Section 8.
Pictorially, we have  
$$
\xymatrix@C30pt@R30pt{
{\bullet}  \ar@{<->}[r] \ar@{<.>}[rd]  \ar@{<->}[d] &  {\bullet}  \ar@{<->}[d]\\
{\bullet} \ar@{<->}[r] \ar@{<.>}[ru] &  {\bullet}
}
$$
where white arrows from a vertex to itself,  the black arrows and dots arrows are 
in $T_0$, $T_1$ and $T_2$, respectively. 
By applying Theorem \ref{thm:construction3}, we obtain an association quasi-schemoid $(\C_Z, \Sigma)$ 
constructed by the $4\times 4$ matrix with data of the partition 
$$
Z= n_0R_0 + n_1R_1 + n_2R_2 +\text{diag}(1, 1, 1, 1) =\left(
\begin{array}{cccc}
n_0+1& n_1 & n_1 & n_2 \\
n_1 & n_0+1 & n_2 & n_1 \\
n_1 & n_2 & n_0+1 & n_1 \\
n_2 & n_1 & n_1 & n_0+1
\end{array}
\right). 
$$
The schemoid $(\C_Z, \Sigma)$ can be represented by the picture 
$$
\xymatrix@C30pt@R30pt{
{\bullet}  \ar@(ul,dl)@{<~>}[]_{\times n_0} \ar@{<->}[r]^{\times n_1} \ar@{<.>}[rd]^{\times n_2}  
\ar@{<->}[d]_{\times n_1} &  {\bullet}  \ar@(ur,dr)@{<~>}[]^{\times n_0} \ar@{<->}[d]^{\times n_1} \\
{\bullet} \ar@(ul,dl)@{<~>}[]_{\times n_0} \ar@{<->}[r]_{\times n_1} \ar@{<.>}[ru]_{\times n_2} &  
  {\bullet} \ar@(ur,dr)@{<~>}[]^{\times n_0}
}
$$
with $4$ white identities. 
\end{ex}

\medskip
\noindent
{\it Acknowledgements.} The first author thanks Akihide Hanaki, Akihiko Hida and Mitsugu Hirasaka 
for precious discussions on association schemes, their representations, extensions and related topics. 
The authors are grateful to Yu Maekawa for explaining important properties of structure constants. They also thank the referee 
for valuable suggestions to revise a previous version of this paper.

\section{Appendix: A schemoid and a toy model for a network}
In this section, we relate a schemoid with a toy model for a network. 

Let ${\mathbb F}$ be a set of $q$ elements and $X$ the product set ${\mathbb F}^n$. 
Then we obtain the Hamming scheme $H(n, q) = (X, \{R_k\}_{k=1, ..., n})$ which is one of crucial association schemes; 
that is, with the Hamming metric 
$\partial (x, y)= \sharp\{i \mid  x_i \neq y_i\}$ for $x= (x_i)$ and $y = (y_i)$, $R_k$ is defined by 
$
R_k :=\{(x, y) \in X\times X \mid \partial (x, y) =k\}. 
$    

A main problem in coding theory is to estimate the maximal size of any subset (code) $C$ of $X$ such that no 
two elements in $C$ have the Hamming metric less than a given value. We here refer to elements in $X$ as passwords. 
Though the problem considering the maximal size of such a set originates in the study of error correcting code, 
the importance of the problem also comes from other reason. In fact, 
when an {\it administrator} distributes passwords among {\it users}, 
he or she should need more passwords which are different from one another with appropriate large metric.  
Someone of users might use a wrong password $w$ which differs from correct one a little bit. 
If $w$ is a password of another someone, that gives rise to an inadvisable circumstance. 

It is significant to mention that upper-bounds of the maximal size of such a code are considered by using 
the Bose-Mesner algebra of an association scheme; see \cite{D}. 

After determining a maximal code $C$, a system of a community begins to work.   
We can consider the whole Hamming scheme or a code $C$ a complete graph with passwords as vertices.    
If one changes the password to some other one along edges,  
then {\it directions} would be needed.  It is natural to regard such directed edges as morphisms 
connecting the passwords (vertices, objects). Thus a categorical notion appears in the system. 

In order to explain such notion a little more, 
a toy model which an administrator constructs is given here using undefined terminology. 
For vertices $x$, $y$ and $z$ in a directed graph, the concatenation of directed edges (morphisms) 
$(z, x)$ and $(x, y)$ is defined by $(z, x)\circ (x, y) = (z, y)$. 
$$
\xymatrix@C35pt@R10pt{
& *=0{\bullet}  \ar@{->}[dl]_{(z, x)}   & \\
*=0{\bullet} & & *=0{\bullet}\ar@{->}[ul]_{(x, y)} \ar@{->}[ll]^{(z, y)}
}
$$
The administrator might establish a rule which prohibits users from changing freely the passwords via edges.   
If he or she only admits changes of the passwords through {\it odd number} of directed edges, then 
the equality $(z, x)\circ (x, y) = (z, y)$ above is inconvenient. 
Thus a blow-up of the set of edges is needed. We attach some edges to the graph as the diagram below. 
$$
\xymatrix@C30pt@R10pt{
& *=0{\bullet}  \ar@{->}[dl]_{(z, x)_0}  \ar@{->}@/_2pc/[dl]_{(z, x)_1}  &  \\
*=0{\bullet} & & *=0{\bullet}\ar@{->}[ul]_{(x, y)_0} \ar@{->}@/_2pc/[ul]_{(x, y)_1} \ar@{->}[ll]^{(z, y)_0}
\ar@{->}@/^1.5pc/[ll]^{(z, y)_1}    }
$$
Here $0,1 \in {\mathbb Z}/2$.  
Then a natural concatenation is given by 
\begin{equation}
\label{eq_1}
(z, x)_1\circ (x, y)_1 = (z, y)_{1+1} = (z, y)_0 \neq (z, y)_1. 
\end{equation}
Using only the directed edges of the form $(*, *)_1$, a system of a network that he or she requires may be constructed.  
Indeed, such a system, which has both structures of an association scheme and a small category, seems to be a schemoid. 
Observe that the concatenation law (\ref{eq_1}) is realized in a linear extension of a category due to 
Baues and Wirsching \cite{B-W}. In fact,  let $\E_\eta$ be  the extension in Remark \ref{rem:iso_but_not_eqivalent} 
for $\eta = 0, 1$. Then we see that 
$$
(f, a, a') \circ (g, b, b') = (f\circ g, a+b, \eta\Delta(a, b) + a' + b'), 
$$
where $\Delta( \ , \ ) : {\mathbb Z}/2 \times {\mathbb Z}/2 \to {\mathbb Z}/2$ is a $2$-cycle defined by 
$\Delta(1,1)= 1$ and $\Delta(s, t) = 0$ for $(s,t) \neq (1,1)$. 

The administrator might use the extension $\E_0$ and morphisms of the form  
$(*, 0, 1)$ only  to construct a system because 
$
(f, 0, 1) \circ (g, 0, 1) = (f\circ g, 0, 0)\neq (f\circ g, 0, 1). 
$
If he or she uses the extension  $\E_1$ and morphisms of the same form $(*, 0, 1)$, then one has 
$
(f, 0, 1) \circ (g, 0, 1) = (f\circ g, \Delta(1, 1)+ 1+1)=(f\circ g, 0, 1)
$
and hence all users can change their passwords freely.


\end{document}